\newcommand{\E}{{\mathbb E}}
\renewcommand{\P}{{\mathbb P}}
\newcommand{\R}{{\mathbb R}}
\renewcommand{\S}{{\mathbb S}}
\newcommand{\Fcal}{{\mathcal F}}
\newcommand{\Mid}{{\ \Big|\ }}
\newcommand{\Fc}{{\mathcal F}}
\newcommand{\id}{{\rm id}}
\DeclareMathOperator{\tr}{tr}
\DeclareMathOperator{\Tr}{Tr}
\DeclareMathOperator{\vecop}{vec}
\DeclareMathOperator{\supp}{supp}
\newtheorem{theorem}{Theorem}
\newtheorem{definition}[theorem]{Definition}
\newtheorem{lemma}[theorem]{Lemma}
\newtheorem{proposition}[theorem]{Proposition}
\newtheorem{remark}[theorem]{Remark}
\theoremstyle{definition}
\newtheorem{example}[theorem]{Example}
\numberwithin{equation}{section}
\numberwithin{theorem}{section}
\definecolor{darkgreen}{rgb}{0,0.7,0}
\DeclareMathOperator{\diag}{diag}
\newcommand{\iii}{{\vert\kern-0.25ex\vert\kern-0.25ex\vert}}
\begin{document}

\title{The Laplace transform of the integrated Volterra Wishart process}

\author{Eduardo Abi Jaber\thanks{Universit\'e Paris 1 Panth\'eon-Sorbonne, Centre d'Economie de la Sorbonne, 106, Boulevard de l'H\^opital, 75013 Paris, eduardo.abi-jaber@univ-paris1.fr. My  work was supported by grants from R\'egion Ile-de-France. I would like to thank Aur\'elien Alfonsi,  Martin Larsson,  Sergio Pulido and Mathieu Rosenbaum for helpful comments and fruitful discussions.}}

\maketitle

\begin{abstract}
	We establish an explicit expression for the  conditional Laplace transform of the integrated Volterra Wishart process in terms of a certain resolvent of the covariance function. 
	The core ingredient is the derivation of the conditional Laplace transform of general Gaussian processes  in terms of Fredholm's determinant and resolvent. 
	Furthermore, we link the characteristic exponents to a system of non-standard infinite dimensional matrix Riccati equations. This leads to a second representation of the Laplace transform for a special case of convolution kernel.  In practice, we show that both representations can be  approximated by either closed form solutions of conventional Wishart distributions or finite dimensional matrix Riccati equations stemming from conventional linear-quadratic models. This allows fast pricing in  a variety of highly flexible models, ranging from bond pricing in  quadratic short rate models  with rich autocorrelation structures, long range dependence  and possible default risk, to pricing basket options with covariance risk in multivariate rough volatility models.  	\\[2ex] 
	\noindent{\textbf {Keywords:} Gaussian processes,  Wishart processes,  Fredholm's determinant, quadratic short rate models, rough volatility models.}\\[2ex] 
	  	\noindent{\textbf {MSC2010 Classification:}} 	60G15, 	60G22,	45B05, 	91G20.
\end{abstract}

%\tableofcontents

\newpage
\section{Introduction}

We are interested in the $d\times d$ Volterra Wishart process $XX^\top$ where $X$ is the $d\times m$-matrix valued Volterra Gaussian process
\begin{align}
X_t &= g_0(t) + \int_0^t K(t,s)  dW_s, 
\end{align}
for some given input curve $g_0:[0,T]\to {\R}^{d\times m}$, suitable kernel $K:[0,T]^2 \to \R^{d\times d}$ and $d\times m$-matrix Brownian motion $W$, for a fixed time horizon $T>0$.

The introduction of the kernel $K$ allows for flexibility in financial modeling as illustrated in the two following examples. 
First, one can consider asymmetric (possibly negative)  quadratic short rates  of the form 
$$r_t=\tr\left( X_t^\top Q X_t   \right) + \xi(t)$$
where $Q \in \S^d_+$, $\xi$ is an input curve used for matching market term structures and $\tr$ stands for the trace operator. The kernel $K$  allows for richer autocorrelation structures than the one generated with the  conventional  \citet{hull1990pricing} and  \citet*{cox2005theory}  models.   
Second, for $d=m$, one can build stochastic covariance models for $d$--assets $S=(S^1,\ldots, S^d)$ by considering the following dynamics for the stock prices:
\begin{align*}
dS_t &= \diag(S_t) X_t dB_t 
\end{align*}
where $B$ is $d$-dimensional and correlated with $W$. Then, the instantaneous covariance between the assets is stochastic and given by $\frac{d\langle \log S \rangle_t}{dt} = X_tX_t^\top \in \S^d_+$.  When $d=m=1$, one recovers the  Volterra version of the \citet{stein1991stock} or \citet{schobel1999stochastic} model. 
Here, singular kernels $K$ satisfying $\lim_{s\uparrow t}|K(t,s)|=\infty$, allow to take into account roughness of the sample paths of the volatility, as documented in \citet{bennedsen2016decoupling,gatheral2018volatility}. As an illustrative example for $d=m=1$, one could  consider the Riemann-Liouville fractional Brownian motion 
\begin{align}\label{eq:RLfbm}
X_t = \frac{1}{\Gamma(H+1/2)}  \int_0^t (t-s)^{H-1/2}dW_s,
\end{align}
either  with $H \in (0,1/2)$ to reproduce roughness when modeling the variance process, or with $H \in (1/2,1)$ to account for long memory in short rate models.

In both cases, integrated quantities of the form $\int_0^{\cdot} X_s X_s^\top ds $ play a key role for  pricing zero-coupon bonds and options on covariance risk. In order to keep the model tractable, one needs to come up with fast pricing and calibration techniques. The main objective of the paper is to show that these models remain highly tractable, despite the inherent non-markovianity and non-semimartingality due to the introduction of the kernel $K$.  For $w\in \S^d_+$,
our main result (Theorem~\ref{T:charXX}) provides the explicit expression for the conditional  Laplace transform:
\begin{align}
\E\left[ \exp\left(-\int_t^T \tr\left( w X_sX_s^\top\right)ds \right)\Mid \Fc_t \right]=\exp\left(\phi_{t,T}  +  \int_{(t,T]^2} \!\tr\left(  g_t(s)^\top \Psi_{t,T} (ds,du) g_t(u)\right)\right),\;\;\;\;
\end{align}
where $(\phi,\Psi)$ are defined by 
\begin{align}
\partial_t\phi_{t,T} &=   -  m\int_{(t,T]^2} \tr\left(  \Psi_{t,T}(ds,du) K(u,t) K(s,t)^\top  \right), \quad 	\phi_{T,T}= 0,    \\
\Psi_{t,T}(ds,du) &= -w  \delta_{\{s=u\}} (ds,du) - \sqrt w R^w_{t,T}(s,u) \sqrt w  ds du,  
\end{align}
with $g_t(s)=\E[X_s|\Fc_t]$ the forward process, $C_t(s,u)=\E[(X_s-g_t(s))(X_u-g_t(u))^\top|\Fc_t]$ the conditional covariance function, and  $R^w_{t,T}:[0,T]^2 \to \R^{d\times d}$ the Fredholm resolvent of $(-2\sqrt{w} C_t\sqrt w )$ on $[0,T]$ given by
$$ R^w_{t,T}(s,u)=-2\sqrt{w} C_t(s,u)\sqrt w- \int_t^T 2\sqrt{w} C_t(s,z)\sqrt w R^w_{t,T}(z,u)dz, \quad t\leq s,u\leq T. $$

{Using the integral operator  $\bold{C}_t$ induced by the covariance kernel $C_t$, i.e.~$(\bold{C}_tf)(s)=\int_0^T C_t(s,u)f(u)du$ for $f\in L^2\left([0,T],\R^{d\times m}\right)$, the Laplace transform can be re-expressed in analytic form 
	\begin{align}
	\E\left[ \exp\left(-\int_t^T \tr\left( w X_sX_s^\top\right)ds \right)\Mid \Fc_t \right]=\frac{\exp\left( -\langle g_t,   \sqrt w \left( \id + 2\sqrt w \bold{C}_t \sqrt w  \right)^{-1} \sqrt w g_t \rangle_{L^2_t}\right)}{\det\left( \id + 2\sqrt w \bold{C}_t \sqrt w  \right)^{m/2}}
	\end{align}
	where $\langle f,g \rangle_{L^2_t}=\int_t^T \tr\left(f(s)^\top g(s)\right)ds$ and $\det$ stands for the Fredholm determinant.}

{The Laplace transform is exponentially quadratic  in the forward process $(g_t)_{t\leq T}$, and cannot in general be recovered from that of finite dimensional affine Volterra processes introduced in \citet{AJLP17}, see Remark~\ref{R:dynamic}. We also mention that the models studied here are quadratic constructions of Gaussian processes and do not pose any difficulty regarding existence and uniqueness, in contrast for instance with conventional Wishart processes that go beyond squares of Gaussians, see \citet{bru1991wishart}.}

{Furthermore, we link $\Psi$ to a system of non-standard infinite dimensional backward Riccati equations in the general case of non-convolution kernels.   This allows us to deduce  a second representation of the Laplace transform for a special case of convolution kernels in the form 
 $$ 
 K(t,s)=k(t-s)\bm 1_{s \leq t}  \quad \mbox{ such that } \quad  k(t) = \int_{\R_+} e^{-xt} \mu(dx), \quad t > 0,
 $$
for some suitable signed measure $\mu$, showing, similarly to \citet{cuchiero2019markovian,harms2019affine}, that the Volterra Wishart process can be seen as a superposition of possibly infinitely many conventional linear-quadratic models written on the infinite dimensional process
  $$  Y_t(x) = \int_0^t e^{-x(t-s)} dW_s, \quad  t\geq 0, \quad x \in \R_+.$$ 
 In particular, this second representation not only allows us to recover the 
expressions for the Laplace transform derived in the aforementioned articles but, most importantly, provides an explicit solution for the corresponding infinite dimensional Riccati equations. 
}
 %This also highlights suitable infinite dimensional lifts  allow to write quadratic Gaussian Volterra models as infinite dimensional affine processes, see \citet[Section 5]{harms2019affine} for an illustration. should be contrasted with the classical case $K\equiv I_d$ where linear-quadratic models can be recast into the framework of affine processes by a suitable  extension of the state space, see \citet{cheng2007linear,DFS:03}.

Although explicit, the expression for the Laplace transform is not known in closed form, except for certain cases. We provide two approximation procedures  either by closed form solutions   of conventional Wishart distributions (Section~\ref{S:approx1}) or finite dimensional matrix Riccati equations  stemming from conventional linear-quadratic models (Section~\ref{S:approx2}).  These approximations can then be used to  price  bonds with possible default risk, or options on covariance in multivariate (rough) volatility models by Laplace transform techniques (Section~\ref{S:applications}).  
\vspace{0.2cm}

\textbf{Literature}
Conventional Wishart  processes initiated by  \citet{bru1991wishart} and  introduced in finance by \citet{gourieroux2003wishart} have been intensively applied, together with their variants, in  term structure  and stochastic covariance modeling, see for instance 
\citet{A15,buraschi2010correlation, CFMT:11, cuchiero2016affine, da2007option,da2008multifactor,gourieroux2009wishart, muhle2012option}. Conventional linear quadratic models have been characterized in  \citet{chen2004quadratic,cheng2007linear}.  
Volterra Wishart processes have been recently studied in  \citet{cuchiero2019markovian,yue2018fractional}. Applications of certain quadratic Gaussian processes can be found in \citet{benth2018non, corcuera2013short,harms2019affine, kleptsyna2002new}. Gaussian stochastic volatility models have  been treated in \citet{gulisashvili2018large, gulisashvili2019extreme}.

\vspace{0.2cm}
\textbf{Outline}  In Section~\ref{S:quadratic}  we derive the Laplace transform of  general quadratic Gaussian processes in $\R^N$, we provide a first approximation procedure by closed form expressions and link the characteristic exponent to non-standard Riccati equations. These results are then used in Section~\ref{S:Wishart} to deduce the Laplace transforms of Volterra Wishart processes. We also provide a second representation formula for the Laplace transform together with an approximation scheme for a special class of convolution kernels. Section~\ref{S:applications} presents applications to pricing: (i) bonds  in quadratic Volterra short rate models with possible default risk; (ii) options on volatility for basket products in Volterra Wishart  (rough) covariance models. Some technical results are collected in the appendices.

\vspace{0.2cm}
\textbf{Notations} For $T>0$, we define $L^2([0,T]^2,\R^{N\times N})$ to be the space of  measurable functions $F:[0,T]^2 \to \R^{N\times N}$ such that 
\begin{align*}
\int_0^T \int_0^T |F(t,s)|^2 dt ds < \infty.
\end{align*}
For any $F,G \in  L^2([0,T]^2,\R^{N\times N})$ we define the $\star$-product   by
\begin{align}\label{eq:starproduct}
(F \star G)(s,u) = \int_0^T F(s,z) G(z,u)dz, \quad  s,u \leq T,
\end{align}
which is well-defined in $L^2([0,T]^2,\R^{N\times N})$ due to the Cauchy-Schwarz inequality.  We denote by $F^*$ the adjoint kernel of $F$ in $L^2([0,T],\R^{N\times N})$, that is 
$$  F^*(s,u)=F(u,s)^\top, \quad  s,u \leq T.$$
 For any  kernel $F \in L^2([0,T]^2,\R^{N\times N})$, we denote by  $\bold F$ the integral operator from $L^2([0,T],\R^N)$ into itself induced by the kernel $F$ that is 
\begin{align}
 ({\bold F} g)(s)=\int_0^T F(s,u) g(u)du,\quad g \in L^2([0,T],\R^N).
\end{align}
  If $\bold{F}$ and $\bold{G}$ are two integral operators induced by the kernels $F$ and $G$  in $L^2([0,T]^2,\R^{N\times N})$, then $\bold{F}\bold{G}$ is an integral operator induced by the kernel $F\star G$.
 
 $\S^N_+$ stands for the cone of symmetric non-negative semidefinite  $N\times N$-matrices, $\tr$ denotes the trace of a matrix and $I_N$ is the $N\times N$ identity matrix.  The vectorization operator is denoted by $\vecop$ and the Kronecker product by $\otimes$, we refer to Appendix~\ref{A:matrix} for more details.

\section{Quadratic Gaussian processes}\label{S:quadratic}
Throughout this section, we fix  $T>0$, $N\geq 1$ and let $Z$ denote a $\R^N$-valued square-integrable Gaussian process on a filtered probability space $(\Omega, \Fc, (\Fc_t)_{t\leq T},\P)$ with mean function $g_0(s)=\E[Z_s]$ and covariance kernel given by  $C_0(s,u)=\E[(Z_s-g_0(s))(Z_u-g_0(u))^\top]$, for each $s,u \in [0,T]$. We note that $g_0$ and $C_0$ may depend on $T$, but we do not make this dependence explicit to ease notations.

 \subsection{Fredholm's representation and first properties}
Assume that $C_0$ is continuous in both variables.  Then, there exists a kernel $K_T \in L^2([0,T]^2,\R^{N\times N})$ and a $N$-dimensional Brownian motion $W$ such that 
\begin{align}\label{eq:Zrep}
Z_t = g_0(t) + \int_0^T K_T(t,s)dW_s,
\end{align}
for all $t\leq T$, see \citet[Theorem 12 and Example 2]{sottinen2016stochastic}. In particular, $C_0=K_T \star K_T^*$, that is 
$$ C_0(s,u)=\int_0^T K_T(s,z)K_T(u,z)^\top dz, \quad s,u\leq T. $$

For any $t\leq s$, $Z_s$ admits the following decomposition 
\begin{align}\label{eq:Zs}
Z_s = g_0(s) + \int_0^t K_T(s,u)dW_u + \int_t^T K_T(s,u)dW_u,
\end{align}
showing that conditional on $\Fc_t$, $Z_s$ is again a Gaussian process with conditional mean 
\begin{align*}
g_{t}(s)=\E[Z_s|\Fc_t]= g_0(s) + \int_0^t K_T(s,u)dW_u , \quad t \leq s\leq T, 
\end{align*}
and conditional covariance function 
\begin{align}
C_{t}(s,u)&=\E[(Z_s-g_t(s))(Z_u-g_t(u))^\top|\Fc_t] \nonumber\\
&= \int_t^T K_T(s,z)K_T(u,z)^\top dz, \quad\quad \quad  t\leq s,u \leq T. \label{eq:Ct}
\end{align}
Again we drop the possible dependence of $g_t$ and $C_t$ on $T$, and we note in particular that for each $s,u\leq T $, $t\to C_{t}(s,u)$ is absolutely continuous  on $[0,s\wedge u]$ with density 
\begin{align}\label{eq:Ctdensity}
\dot C_{t}(s,u)=-K(s,t)K(u,t)^\top,
\end{align}
and  that  the process $t \mapsto g_{t}(s)$ is a semimartingale on $[0,s)$ with dynamics
$$ d g_{t}(s)= K_T(s,t) dW_t, \quad t < s .$$

 We are chiefly interested in the $\S^N_+$-valued process  $ZZ^\top$.
 The following remark shows that, in general, $ZZ^\top$ cannot be recast as an affine Volterra process as studied in \citet{AJLP17}. 
 
 \begin{remark} \label{R:dynamic}
 	To fix ideas, we set $ g_0\equiv Z_0 \in \R^{N}$. An application of It\^o's formula yields  
 	\begin{align*}
 	g_{t}(s)g_{t}(s)^\top &= Z_0Z_0^\top + \int_0^t K_T(s,u) K_T(s,u)^\top du \\
 	&\quad + \int_0^t K_T(s,u) dW_u {g_{u}(s)}^\top + \int_0^t {g_{u}(s)} dW_u^\top  K_T(s,u)^\top, \quad t<s.
 	\end{align*}    
 	Taking the limit $s \to t$ leads to the  dynamics 
 	\begin{align}
 	Z_tZ_t^\top &= Z_0 Z_0^\top + \int_0^t  K_T(t,u)K_T(t,u)^\top du \nonumber \\
 	&\quad + \int_0^t K_T(t,u)dW_u {g_{u}(t)}^\top +\int_0^t {g_{u}(t)}dW_u^\top K_T(t,u)^\top. \label{eq:dynamics}
 	\end{align}
 	{This shows, that in general, because of the presence of the infinite dimensional process $t\mapsto g_t$ in the dynamics,  $ZZ^\top$ does not satisfy a stochastic Volterra equation in the form 
 	$$  Y_t =Y_0 +  \int_0^t K(t,s)b(Y_s)ds + \int_0^t \sigma(Y_s)dW_s^\top K(t,s)^\top +  \int_0^t K(t,s)dW_s\sigma(Y_s)^\top,$$
 	where $b,\sigma:\R^{N}\mapsto \R^{N \times N}$. For this reason, $ZZ^\top$ falls beyond the scope of the processes studied in \citet{AJLP17}. Except for very specific cases, for instance, when  $K_T \equiv I_N$, we have {$ g_{u}(s)=Z_u$ for all $u<s$}, and \eqref{eq:dynamics} reduces to the  well-known dynamics of Wishart processes as introduced by \cite{bru1991wishart}.}
 \end{remark}
 
Whence, the conditional Laplace transform of $ZZ^\top$ cannot be deduced from \citet[Theorem 4.3]{AJLP17}. Nonetheless, it can be directly computed from Wishart distributions  that we recall in Appendix~\ref{A:wishart}.  
 \begin{theorem}\label{T:charzz}   Fix $t \leq s \leq T$.
 	Conditional on $\Fc_t$, $Z_sZ_s^\top$ follows a Wishart distribution 
 	$$Z_s Z_s^\top \sim_{|\Fc_t} {\mbox{WIS}}_{N} \left( 1/2, g_{t}(s)g_{t}(s)^\top, 2 C_{t}(s,s) \right).$$	
 	Further, for any $u \in \S^N_+$, the conditional Laplace transform reads 
 	\begin{align}
 	\E\left[\exp\left(-  Z_s^\top u Z_s  \right) \Mid \Fc_t \right] = \frac {\exp\left(- g_{t}(s)^\top  u \left(I_N + 2C_{t}(s,s)u \right)^{-1}  g_{t}(s) \right)}{\det\left(I_N + 2 C_{t}(s,s)u\right)^{1/2}}.
 	\end{align}
 \end{theorem}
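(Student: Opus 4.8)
The strategy is to condition on $\Fc_t$ and thereby reduce the claim to a standard finite-dimensional Gaussian computation. For fixed $t\le s\le T$, the decomposition \eqref{eq:Zs} writes
\begin{align*}
Z_s = g_t(s) + \int_t^T K_T(s,u)\,dW_u,
\end{align*}
where $g_t(s)$ is $\Fc_t$-measurable and the It\^o integral $\int_t^T K_T(s,u)\,dW_u$ is, by the independent-increments property of $W$ relative to $(\Fc_t)_{t\le T}$, independent of $\Fc_t$ and centered Gaussian with covariance $\int_t^T K_T(s,z)K_T(s,z)^\top\,dz = C_t(s,s)$, see \eqref{eq:Ct}. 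Hence, conditionally on $\Fc_t$, $Z_s$ is a Gaussian vector in $\R^N$ with mean $g_t(s)$ and covariance $C_t(s,s)$.

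The Wishart statement is then immediate from the one-observation case of the definition recalled in Appendix~\ref{A:wishart}: if $Y\sim\mathcal N(\mu,\Sigma)$ in $\R^N$ then $YY^\top\sim{\rm WIS}_N(1/2,\mu\mu^\top,2\Sigma)$; applying this conditionally with $\mu=g_t(s)$ and $\Sigma=C_t(s,s)$ gives the first assertion.

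For the Laplace transform I would compute, on a fixed realization of $\Fc_t$, the Gaussian integral $\E[\exp(-Y^\top u Y)]$ with $Y\sim\mathcal N(\mu,\Sigma)$. Completing the square in the exponent (or, to avoid invertibility issues, writing $Y=\mu+\Sigma^{1/2}\xi$ with $\xi$ a standard normal and diagonalizing $\Sigma^{1/2}u\Sigma^{1/2}$) yields
\begin{align*}
\E\left[\exp\left(-Y^\top u Y\right)\right]=\frac{\exp\left(-\mu^\top u\,(I_N+2\Sigma u)^{-1}\mu\right)}{\det\left(I_N+2\Sigma u\right)^{1/2}}.
\end{align*}
This is well posed since $u,\Sigma\in\S^N_+$ imply that $I_N+2\Sigma u$ has the same (real, $\ge 1$) spectrum as $I_N+2\sqrt u\,\Sigma\sqrt u$, so its determinant is positive and it is invertible. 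Substituting $\mu=g_t(s)$, $\Sigma=C_t(s,s)$ and taking the conditional expectation produces the stated formula; alternatively, it follows by inserting the Wishart parameters into the Laplace transform of the Wishart law recorded in Appendix~\ref{A:wishart}.

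I do not expect any genuine obstacle: the only steps needing a word of justification are the conditional independence and Gaussianity in the first paragraph --- which come directly from $W$ having $\Fc_t$-independent increments on $(t,T]$ --- and the nondegeneracy of $I_N+2C_t(s,s)u$; the matrix Gaussian quadratic-form integral is classical.
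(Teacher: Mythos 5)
Your proposal is correct and follows essentially the same route as the paper: condition on $\Fc_t$ via the decomposition \eqref{eq:Zs} to identify $Z_s$ as conditionally Gaussian with mean $g_t(s)$ and covariance $C_t(s,s)$, then invoke the Gaussian/Wishart Laplace-transform formula (Proposition~\ref{P:Wishartchar}). The extra detail you supply on deriving the Gaussian quadratic-form integral and on the invertibility of $I_N+2C_t(s,s)u$ is simply the content the paper delegates to the appendix.
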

 
 \begin{proof}
 	Fix $t \leq s \leq T$, conditional on $\Fc_t$, it follows from \eqref{eq:Zs} that $Z_s$ is a Gaussian vector in $\R^N$ with mean vector $g_t(s) \in \R^N $ and covariance matrix $C_t(s,s)\in \R^{N\times N}$. The claimed result now follows from Proposition~\ref{P:Wishartchar}. 
 \end{proof}
 
 In particular, if $N=1$, $t=0$ and $s=T$, one obtains the well-known chi-square distribution 
 \begin{align*}
 \E\left[\exp\left(-  u Z^2_T  \right) \right] = \frac {\exp\left( \frac{-ug_0(T)^2} {1 + 2  u C_0(T,T)}  \right)}{\left(1 + 2  u C_0(T,T)\right)^{1/2}}, \quad u \geq 0.
 \end{align*}
 
The computation of the Laplace transform for the integrated squared process is more involved and is treated in the next subsection. 
 
\subsection{Conditional Laplace transform of the integrated quadratic process}
 We are interested in computing the conditional Laplace transform 
\begin{align}\label{eq:charZ}
\E\left[  \exp\left( -\int_t^T Z_s^\top w Z_s ds  \right)  \Mid \Fc_t\right], \quad w \in \S^N_+,  \quad t \leq T.
\end{align} 
For $t=0$ and for centered processes, such computations appeared several times in the literature showing that the quantity of interest can be decomposed as an infinite product of independent chi-square distributions, see for instance \citet{anderson1952asymptotic,cameron1959inversion, varberg1966convergence}.
The same methodology can be readily adapted to our dynamical case and makes use of the celebrated  Kac--Siegert/Karhunen--Lo\`eve representation of the process $Y=\sqrt{w}Z$ whose conditional covariance function is $C^w_{t} = \sqrt w C_{t} \sqrt w$, see \citet{kac1947theory, karhunen1946spektraltheorie, loeve1955probability}. For this,  we fix $t\leq T$, we consider the inner product on $L^2([t,T],\R^N)$ given by
$$  \langle f, g \rangle_{L^2_t}= \int_t^T f(s)^\top g(s) ds , \quad  \quad f,g \in L^2([t,T],\R^N),$$
and we assume that $ C_{t}$ is continuous in both variables\footnote{This is equivalent to assuming that the centered Gaussian process $(Z_{\cdot}-\E[Z_{\cdot}])$ is mean-square continuous.}. By definition, the covariance kernel  $C^w_{t}$ is symmetric and nonnegative in the sense that 
$$  C^w_{t}(s,u)=C^w_{t}(u,s)^\top, \quad s,t\leq T,$$
and 
$$   \int_t^T \int_t^T f(s)^\top C^w_{t}(s,u) f(u) du ds \geq 0, \quad f  \in L^2([t,T],\R^N).  $$
An application of Mercer's theorem, see  \citet[Theorem~1 p.208]{shorack2009empirical}, yields the existence of a countable orthonormal basis $(e^n_{t,T})_{n\geq 1}$ in $L^2([t,T],\R^N)$ and a sequence of nonnegative  real numbers $(\lambda^n_{t,T})_{n\geq 1}$ with $\sum_{n\geq 1} \lambda^n_{t,T} < \infty$  such that 
\begin{align}\label{eq:mercerC}
C^w_{t}(s,u)&= \sum_{n\geq 1} \lambda_{t,T}^n  e_{t,T}^n(s) e_{t,T}^n(u)^\top, \quad t\leq  s,u \leq T,
\end{align}
and
\begin{align}\label{eq:operatoreigen}
\int_t^T C^w_{t}(s,u) e^n_{t,T}(u) du = \lambda^n_{t,T}e_{t,T}^n(s), \quad t\leq s \leq T, \quad n \geq 1,
\end{align}
where the dependence of $(e_{t,T}^n,\lambda^n_{t,T})$  on $w$ is dropped to ease notations. This means that $(\lambda^n_{t,T},e^n_{t,T})_{n\geq 1}$ are the eigenvalues and the eigenvectors of the  integral operator $\sqrt{w}\bold{C}_{t} \sqrt w$ from $L^2([t,T],\R^N)$ into itself  induced by $C_{t}^{w}$:
\begin{align*}
(\sqrt{w} \bold{C}_{t} \sqrt{w} f)(s)=\int_t^T  C^w_{t}(s,u) f(u)du, \quad t\leq s \leq T, \quad f\in L^2([t,T],\R^N).
\end{align*}
 As a consequence of Mercer's theorem, conditional on $\Fc_t$, the  process $Y$ admits the   Kac--Siegert representation
\begin{align}\label{eq:Ys}
Y_s &=  \sqrt wg_{t}(s) + \sum_{n\geq 1}   \sqrt{\lambda_{t,T}^n} \xi^n e_{t,T}^n(s), \quad t\leq s\leq T,
\end{align}
where, conditional on $\Fc_t$, $(\xi_n)_{n\geq 1}$ is a sequence of independent standard Gaussian random variables, see \citet[Theorem~2 p.210 and the comment below (14) on p.212]{shorack2009empirical}.
We now introduce the quantities needed for the computation of \eqref{eq:charZ} in Theorem~\ref{T:charZZ} below. We denote by $\id$ the identity operator on $L^2([t,T],\R^N)$, i.e.~$(\id f)(s)=f(s)$, by $ (\id+2\sqrt{w}\bold{C}_{t} \sqrt{w})^{-1} $ the integral operator generated by the kernel 
\begin{align}\label{eq:kernelop2}
\sum_{n\geq 1} \frac 1 { 1+ 2\lambda_{t,T}^n} e_{t,T}^n(s) e_{t,T}^n(u)^\top,
\end{align} 
and we set 
\begin{align}\label{eq:detfredholm}
\det(\id+2\sqrt{w}\bold{C}_{t} \sqrt{w}):= \prod_{n\geq 1} \left({{1+2 \lambda_{t,T}^n}}\right).
\end{align}
The  last expression is well defined due to the convergence of the series $(\sum_{n=1}^m \lambda^n_{t,T})_{m\geq 1}$ and  the inequality 
$$  1 + 2 \sum_{n= 1}^m \lambda^n_{t,T}  \leq  \prod_{n=1}^m \left( 1 + 2 \lambda^n_{t,T} \right)   \leq \exp \left( 2\sum_{n= 1}^m \lambda^n_{t,T} \right),\quad m\geq 1. $$

\begin{theorem}\label{T:charZZ}
	Fix $w\in \S^N_+$ and $t\leq T$.
	Assume that  the function $(s,u)\mapsto C_t(s,u)$ is continuous. Then,
	\begin{align}\label{eq:charinfinite}
	\E\left[  \exp\left( \!-\!\int_t^T Z_s^\top w Z_s ds  \right)  \!\! \Mid \!\Fc_t \right]= \frac{\!\exp\left( -\langle g_{t},  \!\sqrt w\left( \id + 2 \sqrt{w}\bold{C}_{t} \!\sqrt{w} \right)^{-1} \!\!\sqrt w  g_{t} \rangle_{L^2_t} \right) }{\det\left( \id + 2\sqrt{w}\bold{C}_{t} \sqrt{w} \right)^{1/2}}. \quad \;\;\;
	\end{align}
\end{theorem}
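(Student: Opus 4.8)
I would diagonalise the quadratic functional by means of the Kac--Siegert representation \eqref{eq:Ys}. Set $Y=\sqrt w Z$, so that $\int_t^T Z_s^\top w Z_s\,ds=\|Y\|_{L^2_t}^2$, and recall that, conditionally on $\Fc_t$, \eqref{eq:Ys} gives $Y_s=\sqrt w g_t(s)+\sum_{n\ge1}\sqrt{\lambda^n_{t,T}}\,\xi^n e^n_{t,T}(s)$ with $(\xi^n)_{n\ge1}$ i.i.d.\ $\mathcal N(0,1)$. Writing $a_n:=\langle\sqrt w g_t,e^n_{t,T}\rangle_{L^2_t}$ for the coefficients of the (conditionally deterministic) drift in the orthonormal system $(e^n_{t,T})_{n\ge1}$, Parseval's identity yields, $\P$-a.s.,
\[
\|Y\|_{L^2_t}^2=\sum_{n\ge1}\big(a_n+\sqrt{\lambda^n_{t,T}}\,\xi^n\big)^2 ,
\]
so that, conditionally on $\Fc_t$, the exponent is a sum of squares of independent Gaussians $\zeta^n:=a_n+\sqrt{\lambda^n_{t,T}}\,\xi^n\sim\mathcal N(a_n,\lambda^n_{t,T})$.

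Next I would pass to an infinite product. Since the partial sums $\sum_{n\le m}(\zeta^n)^2$ are nondecreasing and converge a.s.\ to $\|Y\|_{L^2_t}^2$, dominated convergence (all integrands lie in $[0,1]$) gives $\E[\exp(-\sum_{n\le m}(\zeta^n)^2)\mid\Fc_t]\to\E[\exp(-\|Y\|_{L^2_t}^2)\mid\Fc_t]$, while conditional independence gives $\E[\exp(-\sum_{n\le m}(\zeta^n)^2)\mid\Fc_t]=\prod_{n\le m}\E[\exp(-(\zeta^n)^2)\mid\Fc_t]$. Each factor is the Laplace transform at $1$ of the square of an $\mathcal N(a_n,\lambda^n_{t,T})$ variable, i.e.\ the scalar instance of Theorem~\ref{T:charzz},
\[
\E\big[\exp(-(\zeta^n)^2)\mid\Fc_t\big]=\frac{\exp\!\big(-a_n^2/(1+2\lambda^n_{t,T})\big)}{(1+2\lambda^n_{t,T})^{1/2}} ,
\]
and multiplying over $n$, with the product of the denominators equal to $\det(\id+2\sqrt w\bold{C}_t\sqrt w)^{1/2}$ by \eqref{eq:detfredholm}, gives
\[
\E\big[\exp(-\textstyle\int_t^T Z_s^\top w Z_s\,ds)\mid\Fc_t\big]=\frac{\exp\!\big(-\sum_{n\ge1}a_n^2/(1+2\lambda^n_{t,T})\big)}{\det(\id+2\sqrt w\bold{C}_t\sqrt w)^{1/2}} ,
\]
the exponent being a convergent series since $\sum_n a_n^2\le\|\sqrt w g_t\|_{L^2_t}^2<\infty$. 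Finally, from the spectral kernel \eqref{eq:kernelop2} one has $(\id+2\sqrt w\bold{C}_t\sqrt w)^{-1}\sqrt w g_t=\sum_n\frac{a_n}{1+2\lambda^n_{t,T}}e^n_{t,T}$, hence $\sum_n a_n^2/(1+2\lambda^n_{t,T})=\langle g_t,\sqrt w(\id+2\sqrt w\bold{C}_t\sqrt w)^{-1}\sqrt w g_t\rangle_{L^2_t}$, which is \eqref{eq:charinfinite}.

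The probabilistic content is entirely in the Kac--Siegert/Karhunen--Lo\`eve representation \eqref{eq:Ys} and the one-dimensional Gaussian-square Laplace transform; the rest is bookkeeping. I expect one genuinely delicate point: the Parseval step tacitly uses that $\sqrt w g_t$ is captured by the eigenfunctions $(e^n_{t,T})_{n\ge1}$ --- equivalently, that $(\id+2\sqrt w\bold{C}_t\sqrt w)^{-1}$ in the statement is to be read as the genuine bounded inverse on all of $L^2_t$ (the operator $\id+\bold{R}^w_{t,T}$ generated by the Fredholm resolvent), not merely as the integral operator associated with the kernel \eqref{eq:kernelop2}. When $\bold{C}_t$ is not injective these two objects disagree on the orthogonal complement of its range, and an extra factor $\exp(-\|\Pi^\perp\sqrt w g_t\|_{L^2_t}^2)$ would then have to be carried along; it disappears precisely because the genuine inverse is used. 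The interchange of $\E[\cdot\mid\Fc_t]$ with the infinite product, by contrast, is routine given the monotonicity of the partial sums.
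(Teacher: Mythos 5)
Your proposal is correct and follows essentially the same route as the paper's own proof: Kac--Siegert diagonalisation of $Y=\sqrt w Z$, Parseval, factorisation into independent one-dimensional Gaussian-square Laplace transforms via dominated convergence, and identification of the resulting series and product with the kernel \eqref{eq:kernelop2} and the Fredholm determinant \eqref{eq:detfredholm}. The delicate point you flag is resolved in the paper's setup because Mercer's theorem is invoked to produce a full orthonormal basis $(e^n_{t,T})_{n\ge1}$ of $L^2([t,T],\R^N)$ (zero eigenvalues included), so the kernel operator in \eqref{eq:kernelop2} coincides with the genuine inverse and no projection correction is needed.
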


\begin{proof}
	Fix $t\leq T$. Parseval's  identity gives $\langle  \sqrt w g_{t},  \sqrt w g_{t}  \rangle_{L^2_t} = \sum_{n\geq 1}   \langle  \sqrt w g_{t},   e^n_{t,T} \rangle_{L^2_t}^2 $ so that 
\begin{align*}
\int_t^T Z_s^\top w Z_s ds  = \langle  Y , Y \rangle_{L^2_t}=\sum_{n \geq 1}  \left( \sqrt{\lambda_{t,T}^n} \xi^n + \langle  \sqrt w g_{t}, e^n_{t,T}   \rangle_{L^2_t}     \right)^2,
\end{align*}
where the first equality follows from  the definition $Y:=\sqrt{w} Z$  and the second equality is a consequence of \eqref{eq:Ys}. 
By the independence of the sequence $(\xi^n)_{n\geq 1}$ and the dominated convergence theorem we can compute
\begin{align*}
	\E\left[  \exp\left( -\int_t^T Z_s^\top w Z_s ds  \right)   \Mid \Fc_t \right] &= \prod_{n\geq 1 } 	\E\left[  \exp\left( -\left( \sqrt{\lambda_{t,T}^n} \xi^n + \langle \sqrt w  g_{t},  e^n_{t,T}  \rangle_{L^2_t}   \right)^2 \right)\Mid \Fc_t \right] \\
	&= \prod_{n\geq 1 } \frac 1 {\sqrt{1+ 2\lambda_{t,T}^n}} \exp\left( -   \frac 1 {1+ 2\lambda_{t,T}^n} \langle \sqrt w  g_{t},  e^n_{t,T}  \rangle_{L^2_t}^2 \right) \\
	&= \det(\id+2\sqrt{w}\bold{C}_{t} \sqrt{w})^{-1/2} \\
	&\quad \quad \quad  \times \exp\left( -    \sum_{n\geq 1 } \frac 1 {1+ 2\lambda_{t,T}^n} \langle \sqrt w  g_{t},  e^n_{t,T}  \rangle_{L^2_t}^2 \right) ,
\end{align*}
where the second equality is obtained from the chi-square distribution, since the random variable  $\left( \lambda^n_{t,T} \xi^n + \langle \sqrt w  g_{t},  e^n_{t,T}  \rangle_{L^2_t}   \right)$ is Gaussian with mean $\langle \sqrt w  g_{t},  e^n_{t,T}  \rangle_{L^2_t}$ and variance $\lambda^n_{t,T}$, for each $n\geq 1$,  see  Proposition~\ref{P:Wishartchar}. The claimed expression now follows upon observing that, thanks to \eqref{eq:kernelop2},  
$$   \langle g_{t}  ,  \sqrt w\left( \id + 2 \sqrt{w}\bold{C}_t\sqrt{w} \right)^{-1} \sqrt w  g_{t} \rangle_{L^2_t}=  \sum_{n\geq 1 } \frac 1 {1+ 2\lambda_{t,T}^n} \langle \sqrt w  g_{t},  e^n_{t,T}  \rangle_{L^2_t}^2.$$
\end{proof}

\begin{remark}\label{R:Lidskii}
	The determinant \eqref{eq:detfredholm} is named after \citet{fredholm1903} who defined it for the first time through the following expansion 
	$$ \det(\id + \bold{C})= \sum_{n \geq 0} \frac 1 {n!} \int_t^T \ldots \int_t^T \det \left[ (C(s_i,s_j))_{1\leq i,j\leq n} \right] ds_1\ldots ds_n ,$$
	where  $\bold{C}$  is a generic integral operator of trace class with continuous kernel $C$.  Lidskii's theorem ensures that  Fredholm's definition is equivalent to  
	\begin{align*}
	\det(\id + \bold{C}) =\exp\left(\Tr\left( \log\left( \id + \bold{C} \right) \right)\right),
	\end{align*}  
	where $\Tr(\bold{C})=\int_t^T {\tr (C(s,s))}ds$, and consequently equivalent to the infinite product expression as in \eqref{eq:detfredholm}, refer to \citet{simon1977notes} for more details. 
\end{remark}

Closed form solutions are known in some standard cases.
\begin{example} Set $N=1$, $t=0$, $T=1$ and  $Z =W$, where $W$ is a standard 
	Brownian motion and $Z_0 \in \R$. Then, $g_0(s)=0$ and $C_0(s,u)=s\wedge u$ and the eigenvalues and eigenvectors of the eigenproblem  \eqref{eq:operatoreigen} are well-known and given by 
	$$ \lambda_{0,1}^n = \frac w {(n- 1/ 2)^2 \pi^2} \quad \mbox{and}  \quad e^n_{0,1}(s)= \sqrt 2 \sin \left( \left(n-\frac 1 2\right) \pi s \right),\quad n \geq 1.$$ 
	Using the identity 
	$$  \prod_{n\geq 1} \left({{1+2 \lambda_{0,1}^n}}\right) = \prod_{n\geq 1} \left({1+ \frac {2w} {(n- 1/ 2)^2 \pi^2} }\right)= \cosh{\sqrt{2w}},$$
	\eqref{eq:charinfinite} reads
\begin{align}\label{eq:cosh}
\E\left[  \exp\left( - w\int_0^1 W_s^2 ds  \right)  \right]= \left(\cosh{\sqrt{2w}}\right)^{-1/2}.
\end{align}
\end{example}

For arbitrary kernels $C$, the eigenpairs  $(\lambda^n,e^n)_{n\geq 1}$ are, in general, not known in closed form. This is the case for instance for the fractional Brownian motion. We provide in the next subsection an approximation by closed form formulas.

{\subsection{Approximation by closed form expressions}}\label{S:approx1}
A natural idea to approximate \eqref{eq:charinfinite} is to discretize the time-integral. Fix $t\leq T$ and let $(s_i^n,\alpha_i^n)$, $i=1,\ldots,n$ be a quadrature rule on $[t,T]$, i.e. 
$$  \int_t^T f(s)ds = \lim_{n\to \infty}\sum_{i=1}^n \alpha^n_i f(s_i^n).$$ By the dominated convergence theorem it follows that 
\begin{align*}
	\E\left[  \exp\left( -\int_t^T Z_s^\top w Z_s ds  \right)  \Mid \Fc_t \right] = \lim_{n\to \infty}  \E\left[ \exp\left( - \sum_{i=1}^n \alpha^n_i Z_{s^n_i}^\top w Z_{s^n_i}  \right)  \Mid \Fc_t \right],
\end{align*}
for all $w\in \S^N_+$. For each $n$, $(Z_{s_1^n},\ldots,  Z_{s_n^n})^\top$ being Gaussian, the right hand side is known in closed form. This is the object of the next proposition which will make use of the Kronecker product $\otimes$ and the vectorization operator $\vecop$, we refer to Appendix~\ref{A:matrix} for more details.
\begin{proposition}\label{eq:Papprox1}
		Fix $w\in \S^N_+$ and $t\leq T$.
	\begin{align}\label{eq:charapprox}
	\E\left[  \exp\left( -\int_t^T Z_s^\top w Z_s ds  \right)  \Mid \Fc_t \right]= \lim_{n \to \infty} \frac{\!\exp\left({-}g_{t}^{n\top}  w_n  \left( I_{nN} + 2  C^n_{t} w_n \right)^{-1} g^n_{t} \right)}{\det\left( I_{nN} + 2  C^n_{t}w_n\right)^{1/2}},
	\end{align}
 where  $w_n=\left( \diag(\alpha^n_1,\ldots,\alpha^n_n) \otimes w \right)  \in \R^{nN\times nN}$,  $g_{t}^n$ is the $nN$-vector 
	\begin{align}\label{eq:gn}
	g_{t}^n = \left( \begin{matrix}
	g_{t}(s^n_1) \\
	\vdots\\
	g_{t}(s^n_n) 
	\end{matrix}\right),
	\end{align}
	and $ C^n_{t}$ is the $nN\times nN$-matrix  with entries 
	\begin{align}\label{eq:cn}
(C_{t}^n)^{p,q} = C_{t}(s_i^n,s_k^n)^{jl},  \quad  p=(i-1)N+j, \; q=(k-1)N+l, 
	\end{align}
	for all $i,k=1,\ldots,n,$ and  $j,l=1,\ldots,N.$
\end{proposition}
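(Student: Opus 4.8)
The plan is to combine the dominated-convergence limit stated just before the proposition with a finite-dimensional Gaussian computation, namely a multivariate version of the chi-square Laplace transform already recorded in Theorem~\ref{T:charzz} (via Proposition~\ref{P:Wishartchar}). First I would fix $w\in\S^N_+$ and $t\leq T$, set $s_i^n=t+i(T-t)/n$, and observe that by continuity of the map $s\mapsto Z_s^\top w Z_s$ (a consequence of mean-square continuity of $Z$, which follows from the assumed continuity of $C_t$, together with continuity of $g_t$) the Riemann sums $\frac{T-t}{n}\sum_{i=1}^n Z_{s_i^n}^\top w Z_{s_i^n}$ converge almost surely to $\int_t^T Z_s^\top w Z_s\,ds$; since the integrand is nonnegative the exponentials are bounded by $1$, so dominated convergence gives
\begin{align*}
\E\!\left[\exp\!\left(-\int_t^T Z_s^\top w Z_s\,ds\right)\Mid\Fc_t\right]
=\lim_{n\to\infty}\E\!\left[\exp\!\left(-\frac{T-t}{n}\sum_{i=1}^n Z_{s_i^n}^\top w Z_{s_i^n}\right)\Mid\Fc_t\right].
\end{align*}
(I note in passing that the spurious factor $\frac{T-t}{n}$ appearing outside the exponential in the displayed identity preceding the proposition is a typo; the correct normalization sits inside the exponent, and this is exactly what is encoded in $w_n$.)

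Next I would rewrite the finite sum as a single quadratic form. Stacking $Z^n:=(Z_{s_1^n},\dots,Z_{s_n^n})^\top\in\R^{nN}$, one has $\frac{T-t}{n}\sum_{i=1}^n Z_{s_i^n}^\top w Z_{s_i^n}=(Z^n)^\top w_n Z^n$ with $w_n=\frac{T-t}{n}(I_n\otimes w)\in\S^{nN}_+$, using the block-diagonal structure of $I_n\otimes w$. Conditional on $\Fc_t$, $Z^n$ is a Gaussian vector in $\R^{nN}$ with mean $g_t^n$ given by \eqref{eq:gn} and covariance matrix $C_t^n$ given by \eqref{eq:cn}, by exactly the same argument as in the proof of Theorem~\ref{T:charzz} (applying \eqref{eq:Zs} coordinatewise and reading off the conditional mean $g_t$ and conditional covariance $C_t$). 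Then the multivariate Gaussian Laplace transform of a quadratic form — the $N\to nN$, $u\to w_n$ instance of Proposition~\ref{P:Wishartchar}, equivalently Theorem~\ref{T:charzz} applied to the $\R^{nN}$-valued Gaussian $Z^n$ — yields
\begin{align*}
\E\!\left[\exp\!\left(-(Z^n)^\top w_n Z^n\right)\Mid\Fc_t\right]
=\frac{\exp\!\left((g_t^n)^\top w_n\left(I_{nN}+2C_t^n w_n\right)^{-1}g_t^n\right)}{\det\left(I_{nN}+2C_t^n w_n\right)^{1/2}}.
\end{align*}
Substituting this into the limit from the first step gives precisely \eqref{eq:charapprox}.

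There is essentially no serious obstacle here; the proof is a bookkeeping assembly of ingredients already in place. The only points requiring a word of care are: (i) justifying the almost-sure convergence of the Riemann sums — which needs continuity, hence integrability, of $s\mapsto Z_s^\top w Z_s$ on $[t,T]$, following from mean-square continuity of the centered process plus continuity of $g_t$ (one may invoke a continuous modification, or simply note that pathwise continuity of $Z$ on $[0,T]$ can be arranged and is implicit in the Fredholm representation \eqref{eq:Zrep}); (ii) checking that $I_{nN}+2C_t^n w_n$ is invertible, which holds because $C_t^n\in\S^{nN}_+$ and $w_n\in\S^{nN}_+$, so $2\sqrt{w_n}C_t^n\sqrt{w_n}$ has nonnegative eigenvalues and $I_{nN}+2\sqrt{w_n}C_t^n\sqrt{w_n}$ is invertible, with $\det(I_{nN}+2C_t^n w_n)=\det(I_{nN}+2\sqrt{w_n}C_t^n\sqrt{w_n})>0$; and (iii) noting that dominated convergence applies with the trivial dominating function $1$ since $w\in\S^N_+$ makes the exponent nonpositive. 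I would present steps one through three in that order, keeping the Gaussian quadratic-form identity as a cited black box from Appendix~\ref{A:wishart}.
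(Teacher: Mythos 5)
Your proof is correct and follows the same route as the paper: discretize via dominated convergence, stack the sampled values into an $\R^{nN}$-valued Gaussian vector, and apply Proposition~\ref{P:Wishartchar} to the resulting quadratic form. The paper's own proof is a two-sentence version of exactly this argument; your added remarks on the almost-sure convergence of the Riemann sums and the invertibility of $I_{nN}+2C_t^n w_n$ simply fill in details the paper leaves implicit.
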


\begin{proof}
	We simply observe that 
	$$ \sum_{i=1}^n \alpha_i^n Z_{s_i^n}^\top w   Z_{s_i^n} = \bold{Z}^{n\top} \left(\diag(\alpha^n_1,\ldots,\alpha^n_n) \otimes w \right) \bold{Z}^n,$$
	where $\bold{Z}^n=\vecop(Z^n)$ and $Z^n=(Z_{s^n_1},\ldots, Z_{s^n_n})$. Conditional on $\mathcal F_t$, $\bold{Z}^n$ being a Gaussian vector in $\R^{nN}$ with mean vector \eqref{eq:gn} and covariance matrix \eqref{eq:cn}, the claimed result readily follows from Proposition~\ref{P:Wishartchar} combined with the dominated convergence theorem. 
\end{proof}

We now illustrate the approximation procedure in practice for $N=1$. 
Consider a one dimensional fractional Brownian motion $W^H$ with Hurst index $H \in (0,1)$ and set 
\begin{align}\label{eq:Ih}
I(H)=\E\left[ \exp\left( -\int_0^1 \left(W^H_s\right)^2 ds \right) \right].
\end{align} 
The (unconditional) covariance function of the fractional Brownian motion is given by 
\begin{align}\label{eq:covfbm}
 C_0^H(s,u)= \frac 12 \left(|s|^{2H}+|u|^{2H}-|s-u|^{2H}\right).
\end{align} 
Fix $n \geq 1$ we consider two quadrature rules on $[0,1]$:  the left Riemann sum with $s_i^n=i/n$ and $\alpha_i^n=1/n$ and the Gauss-Legendre rule advocated in \citet{bornemann2010numerical}. Since $W^H$ is centered, \eqref{eq:gn}  reads $g^n_{0} =0 $ and  the right hand side in \eqref{eq:charapprox} reduces to
\begin{align}\label{eq:Ihn}
I^n(H)=\det \left( I_n +  2  C^{H,n}_0 \diag(\alpha^n_1,\ldots,\alpha^n_n) \right)^{-\frac 1 2},
\end{align}
where $C^{H,n}_0(i,j)= C^H_0(s^n_i,s^n_j) $, $i,j=1,\ldots,n$.
We proceed  as follows. First, we determine the reference value of \eqref{eq:Ih} for several values of $H$. For $H=1/2$, the exact value is  $I(1/2)=\cosh(\sqrt 2)^{-1/2}$, recall \eqref{eq:cosh}. For $H \in \{0.1,0.3,0.7,0.9\}$, we run  a Monte--Carlo simulation to estimate $I(H)$ with the trapezoidal rule with a 95\% confidence interval {and $10^6$ sample paths with $10^3$ time steps for each sample path.} Second,  for each value of $H$, we compute $I^n(H)$ as in \eqref{eq:Ihn}, for several values of $n$ with the left Riemann sum and the Gauss--Legendre quadrature. The results are collected in  Tables \ref{tablewishartapprox}--\ref{tablewishartapproxlegendre} and Figure \ref{fig:convergencewishart}  below.  We observe that the Gauss--Legendre quadrature performs better than the left Riemann sum rule, especially for higher values of $H$. When $H\geq 0.5$,  even with $n=10$, $I^n(H)$ with the Gauss--Legendre rule falls already within the 95\% confidence interval of the Monte--Carlo simulation.  Other quadrature rules can be used in Proposition~\ref{eq:Papprox1}, see for instance \citet{bornemann2010numerical}. {We refer to Remark~\ref{R:approx1} below for a numerical illustration in higher dimensions.}

\begin{center}
	\includegraphics[scale=0.62]{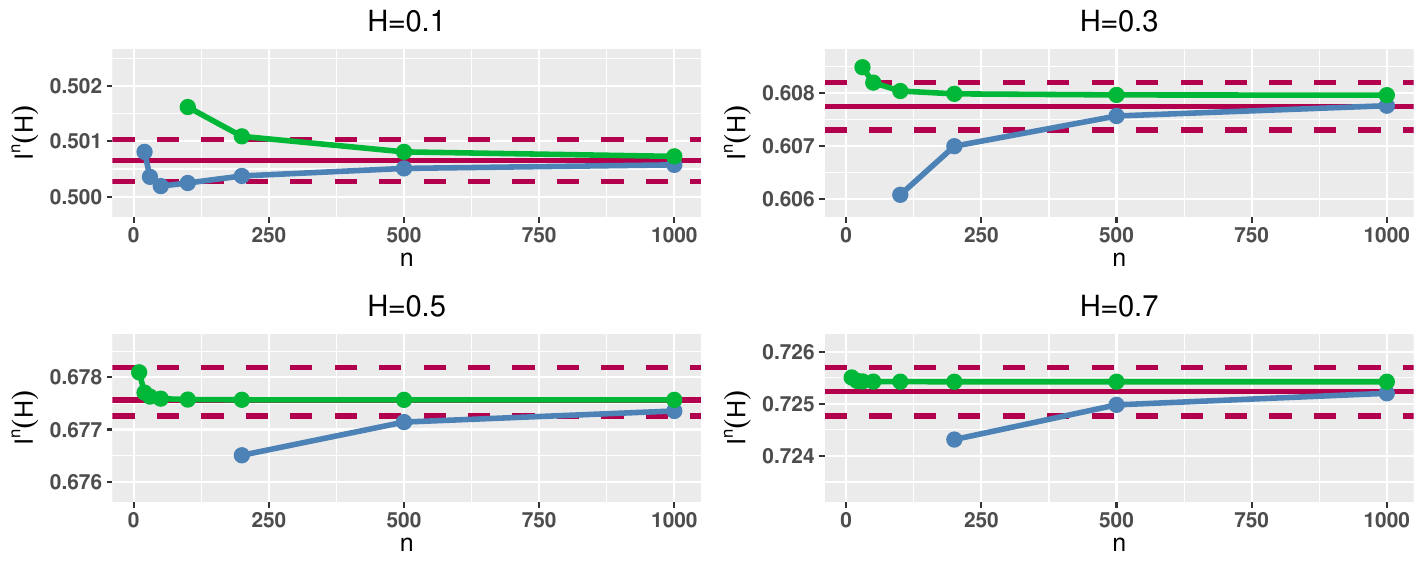}
	\rule{35em}{0.5pt}
	\captionof{figure}{{Convergence of $I^n(H)$ with the Riemann sum (blue) and the Gauss--Legendre quadrature (green)  towards the benchmark MC value $I(H)$ (red) for different values of $(H,n)$ from Table \ref{tablewishartapprox}. The dashed lines delimit the $90\%$ confidence interval of the Monte--Carlo simulation.}}
	\label{fig:convergencewishart}
\end{center}

\begin{table}[h!]
	\centering  % used for centering table
	\begin{tabular}{ccccccc } % centered columns (4 columns)
		\hline
		$H$   & 0.1 & 0.3 & 0.5 & 0.7 & 0.9 \\ 
		%	[0.5ex] % inserts table
		%heading
		\hline \hline                 % inserts single horizontal line
		ref. $I(H)$ &   0.50065 & 0.60775 & 0.67757* & 0.72523 & 0.76023 \\ 
		
		&  &  &  &  &  \\ 
	%	 &  (0.49720, 0.50366) & (0.60388,0.61142) &  & (0.72142, 0.72860) & (0.75626 , 0.76396)  \\
		$n \backslash I^n(H)$&  &  &  &  &  \\ 
		10 & 0.50310 & 0.59301 & 0.65763 & 0.70376 & 0.73779 \\ 
		20 & 0.50081 & 0.59961 & 0.66727 & 0.71445 & 0.74810 \\ 
		30 & 0.50027 & 0.60291 & 0.67160 & 0.71912 & 0.75386 \\ 
		50 & 0.50019 & 0.60433 & 0.67337 & 0.72101 & 0.75581 \\ 
		100 & 0.50025 & 0.60608 & 0.67545 & 0.72321 & 0.75801 \\ 
		200 & 0.50037 & 0.60701 & 0.67650 & 0.72431 & 0.75924 \\ 
		500 & 0.50051 & 0.60757 & 0.67714 & 0.72498 & 0.75992 \\ 
		1000 & 0.50058 & 0.60776 & 0.67735 & 0.72520 & 0.76015 \\ 
		\hline
	\end{tabular}
	\caption{Approximation of $I(H)$ by $I^n(H)$ with the left Riemann sum  for several values of $H$ with  $n$ ranging between $10$ and $1000$. *exact value for $I(1/2)$. }
	\label{tablewishartapprox} % is used to refer this table in the text
\end{table}

\begin{table}[h!]
	\centering  % used for centering table
	\begin{tabular}{ccccccc } % centered columns (4 columns)
		\hline
		$H$   & 0.1 & 0.3 & 0.5 & 0.7 & 0.9 \\ 
		%	[0.5ex] % inserts table
		%heading
		\hline \hline                 % inserts single horizontal line
	ref. $I(H)$ &   0.50065 & 0.60775 & 0.67757* & 0.72523 & 0.76023 \\ 
		&  &  &  &  &  \\ 
		%	 &  (0.49720, 0.50366) & (0.60388,0.61142) &  & (0.72142, 0.72860) & (0.75626 , 0.76396)  \\
		$n \backslash I^n(H)$&  &  &  &  &  \\ 
10 & 0.51331 & 0.61075 & 0.67810 & 0.72550 & 0.76039 \\ 
20 & 0.50665 & 0.60895 & 0.67771 & 0.72544 & 0.76038 \\ 
30 & 0.50447 & 0.60850 & 0.67763 & 0.72543 & 0.76038 \\ 
50 & 0.50279 & 0.60820 & 0.67759 & 0.72543 & 0.76038 \\ 
100 & 0.50162 & 0.60804 & 0.67758 & 0.72542 & 0.76038 \\ 
200 & 0.50109 & 0.60799 & 0.67757 & 0.72542 & 0.76038 \\ 
500 & 0.50081 & 0.60797 & 0.67757 & 0.72542 & 0.76038 \\ 
1000 & 0.50072 & 0.60797 & 0.67757 & 0.72542 & 0.76038 \\ 
		\hline
	\end{tabular}
	\caption{Approximation of $I(H)$ by $I^n(H)$ with the Gauss--Legendre quadrature for several values of $H$ with  $n$ ranging between $10$ and $1000$. *exact value for $I(1/2)$. }
	\label{tablewishartapproxlegendre} % is used to refer this table in the text
\end{table}

%\begin{center}
%	\includegraphics[scale=0.62]{}
%	\rule{35em}{0.5pt}
%	\captionof{figure}{{Convergence of $I^n(H)$ (blue) towards the benchmark MC value $I(H)$ (red) for different values of $(H,n)$ from Table \ref{tablewishartapprox}. The dashed lines delimit the $95\%$ confidence interval of the Monte--Carlo simulation.}}
%	\label{fig:convergencewishart}
%\end{center}

\subsection{Connection to Riccati equations}
The expression \eqref{eq:charinfinite} is reminiscent of the formula obtained for finite dimensional Wishart processes in \citet{bru1991wishart}  and more generally that of linear quadratic diffusions, see \citet{cheng2007linear}, suggesting a connection with infinite dimensional Riccati equations. 
Indeed, setting 
\begin{align*}
\phi_{t,T}&=-\frac 1 2 \Tr\left(\log \left( \id  + 2\sqrt{w}\bold{C}_{t} \sqrt{w}  \right)\right), \\
\bold{\Psi}_{t,T}&=-\sqrt w\left(  \id + 2\sqrt{w}\bold{C}_{t} \sqrt{w} \right)^{-1}\sqrt w,
\end{align*}
it follows from Remark~\ref{R:Lidskii} that \eqref{eq:charinfinite} can be rewritten as 
\begin{align}\label{eq:charinfinite2}
\E\left[  \exp\left( -\int_t^T Z_s^\top w Z_s ds  \right) \Mid \Fc_t \right]= \exp\left({\phi}_{t,T} +\langle g_{t},  \bold{\Psi}_{t,T}  g_{t} \rangle_{L^2_t} \right) , \quad t \leq {T}.
\end{align}
{Since  $t\to C_t(s,u)$ is absolutely continuous with density $\dot{C}_t(s,u)$ given by \eqref{eq:Ctdensity},  one would expect $t \to {\bold{C}_t} $ to be strongly  differentiable\footnote{
	We recall that  $t\mapsto \bold{C}_t$ is strongly differentiable  at time $t\geq  0$,  if there exists a bounded linear operator $\dot{\boldsymbol{C}}_t$  from $ L^2\left([0,T],\R^N\right)$  into  itself  such that 
	\begin{align}
	\lim_{h\to 0} \frac{1}{h} \| \boldsymbol{C}_{t+h}-\boldsymbol{C}_{t} -h \dot{\boldsymbol{C}}_{t} \|_{\rm{op}}=0, \quad \text{where }  \|\boldsymbol{G}\|_{\rm {op}}= \sup_{f \in L^2([0,T],\R^N)} \frac{\|\bold G f\|_{L^2}}{\|f\|_{L^2}}.
	\end{align}
}
 with  derivative $\dot{\bold{C}}_t$ given by the integral operator
 \vspace{-0.2cm}
\begin{align}\label{eq:diffC}
(\dot{\bold{C}}_t f)(s) = \int_t^T \dot C_t(s,u) f(u)du, \quad f \in L^2([0,T],\R^N), \quad s \leq T. 
\end{align}}
%	 Assuming that $t\mapsto \bold{C}_t$ is strongly differentiable, and by} 
By taking the derivatives we get that $({\phi},\bold{\Psi})$ solves the following system of operator Riccati equations
\begin{align}
\dot{{\phi}}_{t,T} &=  \Tr \left( \bold{\Psi}_{t,T} 	\dot{\bold C}_{t} \right), && 	{{\phi}}_{T,T}=0, \label{eq:RiccatiopBigPhi} \\
\dot{\bold{\Psi}}_{t,T} &=  2  \bold{\Psi}_{t,T}	\sqrt{w}\dot{ \bold C}_{t}    \sqrt{w} \bold{\Psi}_{t,T},  &&  {{\bold{\Psi}}_{T,T}=-w \id}, \label{eq:RiccatiopPsi}
\end{align}
where $\dot{\bold{F}}_t$ denotes the derivative of $\bold{F}_t$ with respect to $t$.

This induces a system of Riccati equations for the kernels. To see this, we introduce the concept of resolvent.  Fix $t\leq T$ and define the kernel 
\begin{align}\label{eq:resolventdef}
R^{w}_{t,T}(s,u)=\sum_{n\geq 1} \left(\frac 1 { 1+ 2\lambda_{t,T}^n} -1\right)e_{t,T}^n(s) e_{t,T}^n(u)^\top, \quad t\leq s,u\leq T.
\end{align}
It is straightforward to check, using \eqref{eq:mercerC},  that for all $t \leq s,u\leq T$,
\begin{align}\label{eq:resequation}
\!\!\!2\int_t^T R^{w}_{t,T}(s,z)C_{t}^w(z,u) dz = \!2\int_t^T C_{t}^w(s,z) R^{w}_{t,T}(z,u) dz = -R^{w}_{t,T}(s,u) - 2C_t^w(s,u).\,\,\,\,\,\,
\end{align}
$R^w_{t,T}$ is called the resolvent kernel of $(-2C_{t}^w)$ and the integral operator $\bold{R}^w_{t,T}$ induced by  $R^w_{t,T}$  satisfies the relation
\begin{align}\label{eq:boldRboldC}
\bold{R}^w_{t,T} =(\id +2\sqrt{w}\bold{C}_{t} \sqrt{w})^{-1}- \id,
\end{align}
so that $\bold\Psi_{t,T}$ can be re-expressed in terms of the resolvent
\begin{align*}
\bold\Psi_{t,T} = - w \id - \sqrt{w}\bold{R}^w_{t,T}\sqrt{w}.
\end{align*}
The next theorem, whose proof  is postponed to Appendix~\ref{A:proof}, establishes the representation of the Laplace transform together with the  Riccati  equations \eqref{eq:RiccatiopBigPhi}-\eqref{eq:RiccatiopPsi} in terms of  the induced kernel 
\begin{align}\label{eq:psi_tT}
\Psi_{t,T}(ds,du)= - w\delta_{s=u} (ds,du) + \psi_{t,T}(s,u)ds du,
\end{align}
where $\psi_{t,T}=- \sqrt w R^w_{t,T}  \sqrt w  $ is the density of $\Psi_{t,T}$ with respect to the Lebesgue measure. We recall the $\star$-product defined in \eqref{eq:starproduct}. 

\begin{theorem}\label{T:char2ZZ}
	Fix $w\in \S^N_+$ and $T>0$. 
	Assume that the function $(s,u)\mapsto C_t(s,u)$ is continuous, for each $t\leq T$, such  that 
		\begin{align}\label{eq:boundC}
	\sup_{t\leq T} \sup_{t\leq s,u\leq T}  |C_{t}(s,u)| < \infty.
	\end{align}
	{Assume that $t\mapsto \bold{C}_t$ is strongly differentiable on $[0,T]$ with derivative \eqref{eq:diffC}.}
Then, 
	\begin{align}
	\E\left[  \exp\left( -\int_t^T Z_s^\top w Z_s ds  \right) \Mid \Fc_t \right]= \exp\left({\phi}_{t,T} + \int_{(t,T]^2} g_{t}(s)^\top {\Psi}_{t,T}(ds,du)  g_{t}(u) \right) , \quad t \leq {T},
	\end{align}
	 where $t\mapsto \Psi_{t,T}$ is given by \eqref{eq:psi_tT} and $\phi_{t,T}$ by
	\begin{align*}
	\dot \phi_{t,T} &=   -  \int_{(t,T]^2} \tr\left(  \Psi_{t,T}(ds,du) K_T(u,t) K_T(s,t)^\top \right), \quad 	\phi_{T,T}= 0. 
	\end{align*}
	In particular, $t\mapsto \Psi_{t,T}$ solves the Riccati equation with moving boundary
	\begin{align}
	\dot \psi_{t,T} &= 2 \Psi_{t,T} \star \dot C_{t} \star \Psi_{t,T} \quad\quad  \mbox{on } (t,T]^2  \quad a.e., \label{eq:Ricc00}\\
	\psi_{t,T}(t,\cdot)&=\psi_{t,T}(\cdot,t)^\top =0 \quad \,\;\;\; \,\;\,\,\,\, \mbox{on } \, [t,T] \,\, \quad a.e.\label{eq:Ricc01}
	\end{align}
\end{theorem}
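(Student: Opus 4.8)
The plan is to start from Theorem~\ref{T:charZZ} and merely re-package it, then obtain the announced differential relations by differentiating the operator objects in $t$. Writing $\bold A_t := \id + 2\sqrt w\bold C_t\sqrt w$, the resolvent identity \eqref{eq:boldRboldC} gives $\bold A_t^{-1}=\id+\bold R^w_{t,T}$, hence $\sqrt w\,\bold A_t^{-1}\sqrt w = w\,\id + \sqrt w\bold R^w_{t,T}\sqrt w = -\bold\Psi_{t,T}$, while Lidskii's theorem (Remark~\ref{R:Lidskii}) identifies $-\tfrac12\log\det\bold A_t$ with $\phi_{t,T}$. Expressing $\bold\Psi_{t,T}$ as the operator associated with the measure-valued kernel $\Psi_{t,T}(ds,du)=-w\,\delta_{s=u}(ds,du)+\psi_{t,T}(s,u)\,ds\,du$ of \eqref{eq:psi_tT}, formula \eqref{eq:charinfinite} becomes exactly the asserted representation of the Laplace transform. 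The content of the theorem is therefore the evolution of $(\phi_{t,T},\Psi_{t,T})$ in $t$.

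Next I would work at the operator level. Since $C_t$ is continuous, $\bold C_t$ is positive and trace-class (Mercer) with eigenvalues $\lambda^n_{t,T}$, $\bold A_t$ is boundedly invertible, and $\log\bold A_t$ is trace-class because $\sum_n\log(1+2\lambda^n_{t,T})\le 2\sum_n\lambda^n_{t,T}<\infty$. By assumption $t\mapsto\bold C_t$ is strongly differentiable with $\dot{\bold C}_t$ the integral operator of the degenerate kernel $\dot C_t(s,u)=-K_T(s,t)K_T(u,t)^\top$ from \eqref{eq:Ctdensity}; in particular $\dot{\bold C}_t$ has rank at most $N$ (it factors through $\R^N$), hence is trace-class, so $\dot{\bold A}_t=2\sqrt w\dot{\bold C}_t\sqrt w$ is a trace-class perturbation and the operator-calculus identities $\tfrac{d}{dt}\bold A_t^{-1}=-\bold A_t^{-1}\dot{\bold A}_t\bold A_t^{-1}$ and $\tfrac{d}{dt}\Tr\log\bold A_t=\Tr(\bold A_t^{-1}\dot{\bold A}_t)$ are legitimate. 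Differentiating $\phi_{t,T}=-\tfrac12\Tr\log\bold A_t$ and $\bold\Psi_{t,T}=-\sqrt w\bold A_t^{-1}\sqrt w$, and using cyclicity of the trace together with $\sqrt w\,\bold A_t^{-1}\sqrt w=-\bold\Psi_{t,T}$, yields $\dot\phi_{t,T}=\Tr(\bold\Psi_{t,T}\dot{\bold C}_t)$ and $\dot{\bold\Psi}_{t,T}=2\,\bold\Psi_{t,T}\dot{\bold C}_t\bold\Psi_{t,T}$, with terminal data read off from $\bold C_T=0$, namely $\phi_{T,T}=0$ and $\bold\Psi_{T,T}=-w\,\id$.

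Then I would translate these operator identities into kernels. Computing the kernel of the trace-class operator $\bold\Psi_{t,T}\dot{\bold C}_t$ on the diagonal and integrating over $[t,T]$ (the interchanges justified by the uniform bound \eqref{eq:boundC} and the square-integrability of $K_T$) turns $\dot\phi_{t,T}=\Tr(\bold\Psi_{t,T}\dot{\bold C}_t)$ into $\dot\phi_{t,T}=-\int_{(t,T]^2}\tr(\Psi_{t,T}(ds,du)K_T(u,t)K_T(s,t)^\top)$. Since $\bold\Psi_{t,T}$ differs from the integral operator with kernel $\psi_{t,T}$ only by the $t$-independent operator $-w\,\id$, the operator derivative $\dot{\bold\Psi}_{t,T}$ is the integral operator whose kernel is $\partial_t\psi_{t,T}$; identifying the kernel of $2\bold\Psi_{t,T}\dot{\bold C}_t\bold\Psi_{t,T}$ as $2\,\Psi_{t,T}\star\dot C_t\star\Psi_{t,T}$ (with the atom $-w\,\delta_{s=u}$ acting as the identity under the $\star$-product) gives \eqref{eq:Ricc00}, valid a.e.\ on $(t,T]^2$ after integrating the operator identity $\bold\Psi_{t,T}=-w\,\id-2\int_t^T\bold\Psi_{r,T}\dot{\bold C}_r\bold\Psi_{r,T}\,dr$ and reading off kernels. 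For the moving boundary \eqref{eq:Ricc01}: $Z_t$ is $\Fc_t$-measurable, so $g_t(t)=Z_t$ and hence $C_t(t,u)=C_t(u,t)^\top=0$ for every $u\in[t,T]$; setting $s=t$ in the resolvent equation \eqref{eq:resequation} then forces $R^w_{t,T}(t,u)=0$, and the transpose-symmetry $R^w_{t,T}(s,u)=R^w_{t,T}(u,s)^\top$ (clear from \eqref{eq:resolventdef}) gives $R^w_{t,T}(u,t)=0$ as well, so $\psi_{t,T}(t,\cdot)=\psi_{t,T}(\cdot,t)^\top=-\sqrt w R^w_{t,T}(t,\cdot)\sqrt w=0$.

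The conceptual skeleton above is short; the substance of the argument --- and the part I expect to be the main obstacle --- is the functional-analytic bookkeeping: checking strong differentiability of $t\mapsto\bold A_t^{-1}$ and of $t\mapsto\Tr\log\bold A_t$ with the stated derivatives (the naive formulas require trace-class rather than merely Hilbert--Schmidt perturbations, so the finite-rank structure of $\dot{\bold C}_t$ is essential), and, most delicately, transferring the operator-valued identities to pointwise a.e.\ statements about the kernels $\phi_{t,T}$ and $\psi_{t,T}$ --- i.e.\ showing that strong (operator-norm) differentiability of $t\mapsto\bold\Psi_{t,T}$ entails that $t\mapsto\psi_{t,T}(s,u)$ is, for a.e.\ $(s,u)$, absolutely continuous with the kernel of the operator derivative as its derivative, together with the Fubini arguments needed to identify $\int_t^T(\cdot)\,dr$ of an operator-valued map with the operator induced by the integrated kernel. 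These are the estimates deferred to Appendix~\ref{A:proof}.
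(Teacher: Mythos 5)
Your overall route is the same as the paper's: rewrite Theorem~\ref{T:charZZ} via Lidskii's theorem as $\exp(\phi_{t,T}+\langle g_t,\bold\Psi_{t,T}g_t\rangle_{L^2_t})$ with $\phi_{t,T}=-\tfrac12\Tr\log(\id+2\sqrt w\bold C_t\sqrt w)$ and $\bold\Psi_{t,T}=-\sqrt w(\id+2\sqrt w\bold C_t\sqrt w)^{-1}\sqrt w$, differentiate at the operator level to get \eqref{eq:RiccatiopBigPhi}--\eqref{eq:RiccatiopPsi}, and then descend to the kernels. Your observation that $\dot{\bold C}_t$ has rank at most $N$ (so the trace manipulations are legitimate) is a nice explicit justification, and your derivation of the boundary condition \eqref{eq:Ricc01} from $C_t(t,\cdot)=0$ via the resolvent equation is exactly the paper's.

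The one genuine gap is the step you yourself flag and then defer: passing from strong (operator-norm) differentiability of $t\mapsto\bold R^w_{t,T}$ to the pointwise a.e.\ statement that $t\mapsto R^w_{t,T}(s,u)$ is absolutely continuous with the kernel of $\dot{\bold R}^w_{t,T}$ as its density. This is not mere bookkeeping, and "integrating the operator identity and reading off kernels" does not close it, because the domain of integration $[t,T]$ itself moves with $t$: when you compare $(\bold R^w_{t+h,T}f)(s)=\int_{t+h}^T R^w_{t+h,T}(s,u)f(u)\,du$ with $(\bold R^w_{t,T}f)(s)$, you pick up boundary contributions over $[t,t+h]$ that must be shown to be $o(h)$. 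The paper's Appendix~\ref{A:proof} spends most of its length on exactly this: it first proves the uniform sup-bound \eqref{eq:boundR} on the resolvent kernel (by bootstrapping the resolvent equation, using \eqref{eq:boundC}), then proves joint continuity of $R^w_{t,T}(s,u)$ in $u$ and in $t$ (Lemma~\ref{L:continuity}), upgrades this to uniform continuity via Heine--Cantor, and only then controls the moving-boundary terms and applies dominated convergence (Step~2 of Lemma~\ref{L:Rdiff}). None of these properties of the resolvent kernel is automatic from operator-norm considerations, so your proof is incomplete precisely at the point where the theorem's a.e.\ pointwise Riccati equation \eqref{eq:Ricc00} is actually established. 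Everything else in your proposal matches the paper's argument.
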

  
  We note  that, since $\psi_{t,T}(s,u)=0$ whenever $s\wedge u\leq t$, equation \eqref{eq:Ricc00} is the compact form of 
  \begin{align*}
 \dot \psi_{t,T}(s,u) &=  - 2  wK_T(s,t)K_T(u,t)^\top  w  \\
  &\quad   -   2 w K_T(s,t)\int_t^T K_T(z,t)^\top  \psi_{t,T} (z,u) dz \\
  &\quad  -  2 \int_t^T \psi_{t,T}(s,z)   K_T(z,t) dz  K_T(u,t)^\top w   \\
  &\quad   -  2 \int_t^T    \psi_{t,T} (s,z)  K_T(z,t) dz \int_t^T K_T(z',t)^\top   \psi_{t,T}(z',u)dz', \quad t <s,u \leq T \; a.e.
  \end{align*}
  and the expanded form of $\phi$ is given by
    \begin{align*}
   \dot \phi_{t,T}&=    \int_t^T \tr\left( w K_T(s,t)  K_T(s,t)^\top  \right) ds-\int_t^T\int_t^T \tr\left(  \psi_{t,T}(s,u)   K_T(u,t) K_T(s,t)^\top  \right)dsdu.
    \end{align*}

 %\begin{remark}
 % We note that the Riccati equation  \eqref{eq:Ricc00} on the kernel  corresponds to the one on the operator \eqref{eq:RiccatiopPsi}. Indeed, if $\bold{F}$ and $\bold{H}$ are two integral operators induced respectively by kernels $F$ and $G$  in $L^2([0,T]^2,\R^{N\times N})$, then $\bold{F}\bold{G}$ is an integral operator induced by the kernel $F\star G$.	
  %\end{remark}

\begin{remark}
	The Riccati equation \eqref{eq:Ricc00} can be compared to the \citet{bellman1957functional} and \citet{krein1955new} variation formula for Fredholm's resolvent, see also \citet{golberg1973generalization,schumitzky1968equivalence}.
\end{remark}

\section{The Volterra Wishart process and its Laplace transforms}\label{S:Wishart}
Fix $T>0$ and a filtered probability space $(\Omega,\Fc, (\Fc_t)_{t\leq T} ,\P)$ supporting a $d\times m$--matrix valued Brownian motion $W$. In this section, we consider the special case of the matrix-valued Volterra Gaussian process  
\begin{align}\label{eq:volterragaussian}
X_t &= g_0(t) + \int_0^t K(t,s)  dW_s, 
\end{align}
where  $g_0:[0,T] \to \R^{d \times m}$ is continuous and $K: [0,T] \to \R^{d \times d}$ is a  $d \times d$--measurable kernel of Volterra type, that is $K(t,s)=0$ for $s>t$. Compared to \eqref{eq:Zrep}, since the kernel $K$ is of Volterra type, the integration in  \eqref{eq:volterragaussian} goes  up to time $t$ rather than $T$.  

  Under the assumption
\begin{align}\label{eq:assumptionK}
\sup_{t \leq T} \int_0^T |K(t,s)|^2 ds< \infty \; \mbox{ and } \; \lim_{h \to 0} \int_0^T |K(u+h,s)-K(u,s)|^2 ds=0, \;\;\; u \leq T,
\end{align}  
the stochastic convolution
$$ N_t = \int_0^t K(t,s)dW_s,$$
is well defined as an It\^o integral, for each $t\in [0,T]$. Furthermore, It\^o's isometry leads to
\begin{align}\label{eq:meansquareN}
\E \left[ \left | N_t - N_s\right| ^2 \right] \leq 2  \int_s^t |K(t,u)|^2 du + 2 \int_0^T |K(t,u)-K(s,u)|^2 du   
\end{align}
which goes to $0$ as $s \to t$ showing that $N$ is mean-square continuous, and by virtue of \citet[Proposition 3.21]{PZ07}, the process $N$  admits a predictable version. Furthermore, by the Burkholder-Davis-Gundy inequality {applied on the local martingale $\left(\int_0^r K(t,s)dW_s\right)_{r\in [0,t)}$,}  it holds that 
	\begin{align}\label{eq:estimateX}
\sup_{t \leq T} \E \left[ \left| \int_0^t K(t,s)dW_s\right|^p \right] 	 \leq c_{p,T} \left(\sup_{t \leq T} \int_0^T |K(t,s)|^2 ds\right)^{p/2} < \infty, \quad   p \geq 2,
\end{align}
where $c_{p,T}$ is a positive constant  only depending on $T$ and $p$. Kernels satisfying \eqref{eq:assumptionK} are known as  Volterra kernels of continuous and  bounded type  in $L^2$ in the terminology of \citet[Definitions 9.2.1, 9.5.1 and 9.5.2]{GLS:90}.

 We now provide several kernels of interest that satisfy  \eqref{eq:assumptionK}. In particular, we stress that \eqref{eq:assumptionK}  does not exclude  a singularity of the kernel at $s=t$. 

\begin{example}
	\begin{enumerate}
\item 
For $H\in (0,1)$, the fractional Brownian motion  with  covariance function \eqref{eq:covfbm} admits a Volterra representation of the form \eqref{eq:volterragaussian} on $[0,T]$ with  the kernel
\begin{align*}
K_H(t,s)= \frac{(t-s)^{H-1/2}}{\Gamma(H+\frac 1 2)} \, {}_2 F_1\left(H-\frac 1 2; \frac 1 2-H; H+\frac 1 2; 1-\frac t s \right), \quad s \leq t,
\end{align*}
where ${}_2F_1$ is the Gauss hypergeometric integral, see  \citet{decreusefond1999stochastic}. 
%$K_H$ satisfies \eqref{eq:assumptionK} together with the relation
% $$ \int_0^1 K_H(s,z)K_H(u,z)dz= C_0^H(s,u)= \frac 12 \left(|s|^{2H}+|u|^{2H}-|s-u|^{2H}\right). $$
		\item
		If $K$ is continuous on $[0,T]^2$, then  \eqref{eq:assumptionK} is satisfied by boundedness and the dominated convergence theorem. This is the case for instance for the Brownian Bridge $W^{T_1}$ conditioned to be equal to $W^{T_1}_0$ at a time $T_1$: for all $T<T_1$, $W^{T_1}$ admits the Volterra representation \eqref{eq:volterragaussian} on $[0,T]$ with the continuous kernel 
		$ K(t,s)  = (T_1-t)/(T_1-s)$, for all $s,t\leq T$.
		\item 
		If $K_1$ an $K_2$ satisfy  \eqref{eq:assumptionK} then so does $K_1\star K_2$ by an application of  Cauchy-Schwarz inequality.
	\item 
Any convolution kernel of the form $K(t,s)=k(t-s)\bold 1_{s\leq t}$	with $k\in L^2([0,T],\R^{d\times d})$ satisfies \eqref{eq:assumptionK}. Indeed, for any $t\leq T$,
$$   \int_0^T |K(t,s)|^2 ds = \int_0^t |k(t-s)|^2 ds = \int_0^t |k(s)|^2 ds \le \int_0^T |k(s)|^2 ds,$$
yielding the first part of \eqref{eq:assumptionK}. The second part follows from the  $L^2$-continuity of $k$, see \cite[Lemma 4.3]{brezis2010functional}.
	\end{enumerate}
\end{example}

We   denote the conditional expectation of $X$ by
\begin{align}\label{eq:meanconditional}
g_t(s)=\E\left[X_s |\Fcal_t\right ],\quad  t\leq s\leq T,
\end{align}
which is well-defined thanks to \eqref{eq:estimateX}. For each $t\geq 0$, we denote by $C_t$ the conditional covariance function of $X$ with respect to $\Fc_t$, that is 
\begin{align}\label{eq:meanvariance}
C_{t}(s,u) = \int_t^{s\wedge u}  K(s,r) K(u,r)^\top  dr, \quad t \leq s,u \leq T.
\end{align}
%We note that since the kernel $K$ is of Volterra type, $g_t$ and $C_t$ do not depend on the terminal time $T$. Furthermore,
 $C$ satisfies the assumption of Theorem~\ref{T:char2ZZ} as shown in the next lemma. {The expression of the strong derivative of $\boldsymbol{C}_t$ is given in terms of the density $\dot{C}_t$ of the kernel $C_t$  given by \eqref{eq:Ctdensity} under the following additional assumption on the kernel:
\begin{align}\label{eq:assumptionkerneldiff}
\sup_{t\leq T} \int_0^T |K(s,t)|^2 ds < \infty.
\end{align}}
\begin{lemma}\label{L:Ccont}
Under \eqref{eq:assumptionK},  $(s,u)\mapsto C_t(s,u)$ is continuous, for all $t\leq T$ and  \eqref{eq:boundC} holds.  Furthermore, under \eqref{eq:assumptionkerneldiff},	$t\to \boldsymbol{C}_t$ is strongly differentiable on $[0,T]$ with derivative $\dot{\boldsymbol{C}}_t$  at $t\leq T$ given by the integral operator induced by the kernel $\boldsymbol{C_t}$ given by \eqref{eq:Ctdensity}, that is
\begin{align}
(\dot{\boldsymbol{C}}_t f)(s) = \int_0^T \dot{C}_t(s,u) f(u) du=\int_t^T \dot{C}_t(s,u) f(u) du , \quad f \in L^2\left( [0,T],\R^N \right). \qquad
\end{align}
\end{lemma}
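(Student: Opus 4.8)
The plan is to prove Lemma~\ref{L:Ccont} in two parts, matching the two assumptions in play. First, under \eqref{eq:assumptionK} alone, I would establish continuity of $(s,u)\mapsto C_t(s,u)$ and the uniform bound \eqref{eq:boundC}. Continuity: write $C_t(s,u)=\int_t^T K(s,r)\mathbbm{1}_{r\le s} K(u,r)^\top \mathbbm{1}_{r\le u}\,dr$ and control the difference $C_t(s',u')-C_t(s,u)$ by adding and subtracting a mixed term; each piece is then bounded using Cauchy--Schwarz by a product of an $L^2$-norm of $K$ (finite and uniformly bounded by \eqref{eq:assumptionK}) and a term of the form $\left(\int_0^T |K(s',r)-K(s,r)|^2\,dr\right)^{1/2}$ (plus the small contribution from the difference of the two integration endpoints $s\wedge u$ versus $s'\wedge u'$, which is handled by $\int_{s}^{s'}|K(s',r)|^2\,dr\to 0$ together with mean-square continuity of the convolution à la \eqref{eq:meansquareN}). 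The second limit in \eqref{eq:assumptionK} drives these to zero. The uniform bound \eqref{eq:boundC} is immediate: $|C_t(s,u)|\le \int_0^T|K(s,r)|\,|K(u,r)|\,dr\le \left(\sup_{t\le T}\int_0^T|K(t,r)|^2\,dr\right)<\infty$ by Cauchy--Schwarz.

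The second and more delicate part is the strong differentiability of $t\mapsto\boldsymbol{C}_t$ under the additional assumption \eqref{eq:assumptionkerneldiff}. Here I would fix $t$ and $h$ and estimate the operator norm of the difference quotient minus the candidate derivative, i.e.\ bound $\left\|\tfrac1h(\boldsymbol{C}_{t+h}-\boldsymbol{C}_t)-\dot{\boldsymbol{C}}_t\right\|_{\mathrm{op}}$. Since all three are integral operators, their difference is the integral operator with kernel $E_{t,h}(s,u):=\tfrac1h(C_{t+h}(s,u)-C_t(s,u))-\dot C_t(s,u)$, and for integral operators on $L^2([0,T],\R^N)$ the operator norm is dominated by the Hilbert--Schmidt norm, $\|\boldsymbol{E}_{t,h}\|_{\mathrm{op}}\le \|\boldsymbol{E}_{t,h}\|_{\mathrm{HS}}=\left(\int_0^T\int_0^T |E_{t,h}(s,u)|^2\,ds\,du\right)^{1/2}$. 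So it suffices to show this $L^2([0,T]^2)$-norm of the kernels tends to $0$. From \eqref{eq:meanvariance} and \eqref{eq:Ctdensity} one has, for $h<0$ say (shrinking the lower limit), $\tfrac1h(C_{t+h}(s,u)-C_t(s,u))=-\tfrac1h\int_{t+h}^{t} K(s,r)K(u,r)^\top\,dr$, a Steklov-type average of $r\mapsto \dot C_t(s,u)\big|_{\text{eval at }r}=-K(s,r)K(u,r)^\top$ over $r\in[t+h,t]$, so $E_{t,h}(s,u)$ is exactly the difference between that average and the value $-K(s,t)K(u,t)^\top$ at the endpoint. (For $h>0$ one similarly restricts the integration region and the correction involves the complementary $r$-range; the sign conventions need a moment's care but the structure is the same.)

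The key step, and the main obstacle, is the $L^2$-version of the Lebesgue differentiation / continuity-in-mean argument: I need that the map $r\mapsto \big(s\mapsto K(s,r)\big)$ is continuous from $[0,T]$ into $L^2([0,T],\R^{d\times d})$ at the point $r=t$, which is precisely what \eqref{eq:assumptionkerneldiff} (finiteness) together with the second half of \eqref{eq:assumptionK} (the $L^2$-continuity of $u\mapsto K(u+h,\cdot)$, by symmetry of roles playing into $K(s,\cdot)$) supplies. Given that, I can write $\int_0^T\int_0^T|E_{t,h}(s,u)|^2\,ds\,du\le \tfrac1{|h|}\int_{\text{range}}\left(\int_0^T\int_0^T \big|K(s,r)K(u,r)^\top-K(s,t)K(u,t)^\top\big|^2\,ds\,du\right)dr$, split the integrand via a telescoping $K(s,r)K(u,r)^\top-K(s,t)K(u,t)^\top=(K(s,r)-K(s,t))K(u,r)^\top+K(s,t)(K(u,r)-K(u,t))^\top$, apply Cauchy--Schwarz in $(s,u)$, bound one factor by $\sup_r\int_0^T|K(s,r)|^2\,ds<\infty$ and the other by $\int_0^T|K(s,r)-K(s,t)|^2\,ds$, which by the continuity-in-mean is $o(1)$ uniformly as $r\to t$; averaging over the $r$-range of length $|h|$ then gives $o(1)$ as $h\to0$. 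Finally I would record that the limiting operator $\dot{\boldsymbol{C}}_t$ is indeed bounded (Hilbert--Schmidt, with $\|\dot{\boldsymbol{C}}_t\|_{\mathrm{HS}}^2\le T\sup_r\int_0^T|K(s,r)|^2\,ds\cdot\sup_r\int_0^T|K(u,r)|^2\,du<\infty$ by \eqref{eq:assumptionkerneldiff}), so that the strong-derivative definition in the footnote is met, and note the trivial identity $\int_0^T\dot C_t(s,u)f(u)\,du=\int_t^T\dot C_t(s,u)f(u)\,du$ since $\dot C_t(s,u)=-K(s,t)K(u,t)^\top$ vanishes for $u<t$ (as $K(u,t)=0$ when $t>u$), completing the proof.
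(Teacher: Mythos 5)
Your overall strategy coincides with the paper's: continuity of $(s,u)\mapsto C_t(s,u)$ and the bound \eqref{eq:boundC} via Cauchy--Schwarz and the $L^2$-continuity in \eqref{eq:assumptionK} (the paper gets the continuity from the mean-square continuity estimate \eqref{eq:meansquareN}, which is the same computation), and for the strong derivative you write the error as the integral operator whose kernel is the Steklov average $\frac1h\int_t^{t+h}\dot C_r(s,u)\,dr$ minus $\dot C_t(s,u)$ and control its $L^2([0,T]^2)$-norm; dominating the operator norm by the Hilbert--Schmidt norm is exactly what the paper's Cauchy--Schwarz estimate of $\|A\|_{L^2}$ amounts to.

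The one step I would push back on is your justification of the key continuity-in-mean statement. What you need is $\int_0^T|K(s,r)-K(s,t)|^2\,ds\to 0$ as $r\to t$, i.e.\ $L^2$-continuity of $K$ in its \emph{second} argument with the integral taken over the \emph{first}. The second half of \eqref{eq:assumptionK} controls increments in the \emph{first} argument (integrated over the second), and there is no symmetry hypothesis on $K$ that lets you swap the roles --- for a general Volterra kernel these are genuinely different conditions (they happen to coincide for convolution kernels $K(t,s)=k(t-s)\bm 1_{s\le t}$, which may be what suggested the ``symmetry''), while \eqref{eq:assumptionkerneldiff} only supplies boundedness, not continuity, in the second argument. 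To be fair, the paper's own proof elides exactly the same point: it asserts that the averaged quantity $\frac1h\int_t^{t+h}\int_0^T\int_0^T(\dot C_r(s,u)-\dot C_t(s,u))^2\,du\,ds\,dr$ vanishes ``by virtue of \eqref{eq:assumptionkerneldiff}'', which by itself only bounds the average. So your argument is no less complete than the published one, but the phrase ``by symmetry of roles'' asserts something false as stated; the honest repair is either to add $L^2(ds)$-continuity of $r\mapsto K(\cdot,r)$ as an explicit hypothesis, or to invoke the vector-valued Lebesgue differentiation theorem and settle for differentiability at almost every $t$ (Lebesgue points of $r\mapsto K(\cdot,r)\in L^2([0,T],\R^{d\times d})$) rather than every $t\le T$.
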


\begin{proof} {First, it follows from \eqref{eq:meansquareN} that the process $X$ is mean-square continuous, which implies the continuity of $(s,u)\mapsto C_t(s,u)$.}
	%Fix $t,s,u\leq T$ and $h$ such that $0\leq s+h\leq T$.
	%We write 
	%\begin{align*}
	%C_t(s+h,u)-C_t(s,u)&= \int_t^{s\wedge u}  (K(s+h,r) - K(s,r) )K(u,r)^\top  dr\\
	%&\quad  +  \int_{s\wedge u}^{(s+h)\wedge u} K(s+h,r)K(u,r)^\top dr\\
	%&= \bold{I}+ \bold{II}.
	%\end{align*}
	%An application of Cauchy-Schwarz inequality yields 
	%\begin{align*}
	%|\bold{I}|^2 \leq \int_0^T |K(s+h,r)-K(s,r)|^2 dr \int_0^T |K(u,r)|^2dr,
	%\end{align*}
	%which goes to $0$ as $h\to 0$ thanks to \eqref{eq:assumptionK}. Another application of Cauchy-Schwarz inequality leads to 
	%	\begin{align*}
	%|\bold{II}|^2 \leq \sup_{s'\leq T}\int_0^T |K(s',r)|^2 dr  \int_{s\wedge u}^{(s+h)\wedge u} |K(u,r)|^2dr,
	%\end{align*}
	%which goes to  $0$ as $h\to 0$ again thanks to \eqref{eq:assumptionK}.
	Second, an application of the Cauchy-Schwarz inequality on \eqref{eq:meanvariance} yields
	$$ |C_t(s,u)|^2  \leq \left(\sup_{s'\leq T}\int_0^T |K(s',r)|^2 dr\right)^2 $$
	which proves \eqref{eq:boundC}.  {Finally, to prove the differentiability statement, we  fix $t\leq T$ and first observe that 
	\begin{align*}
	\int_0^T \int_0^T  |\dot{C}_t(s,u)|^2dsdu&=  \left(\int_0^T |K(s,t)|^2 ds\right)^2
	\end{align*}
	which is finite by virtue of \eqref{eq:assumptionkerneldiff}. Whence, the kernel $\dot{C}_t$ belongs to $L^2\left([0,T]^2,\R^{N\times N} \right)$ so that it induces a  linear bounded integral operator $\dot{\bold{C}}_t$   from $L^2\left([0,T],\R^N \right)$ into istelf. We now prove that $r\mapsto\bold{C}_r$ is differentiable at $t$ with  derivative given by $\dot{\bold{C}}_t$. For this, 
	fix $f\in L^2\left([0,T],\R^N \right)$, $s\leq T$ and $h$ such that $t+h\leq T$. Using the fact that, for all $u,s\leq T$, $t\mapsto C_t(s,u)$ is absolutely continuous with density $\dot{C}_t(s,u)$, we get that 
	\begin{align}
	({\bold{C}}_{t+h} f)(s) 
	%&=\int_{0}^T C_{t+h}(s,u)f(u)du \\
	%    &= \int_{t}^T C_{t+h}(s,u)f(u)du \\
	%   &=    ({\boldsymbol{C}}_{t} f)(s) + \int_{t}^T \left(C_{t+h}(s,u)-C_{t}(s,u) \right)f(u)du \\
	% &= ({\boldsymbol{C}}_{t} f)(s) + \int_{t}^T \int_t^{t+h} \dot{C}_r(s,u) dr f(u)du \\
	-   ({\bold{C}}_{t} f)(s) - h  (\dot{\bold{C}}_t f)(s) = \int_{0}^T \int_t^{t+h} \left(\dot{C}_r(s,u)-\dot{C}_t(s,u)\right) dr f(u)du:=A(s).
	\end{align}
	We now bound the right hand side in $L^2\left( [0,T], \R^N\right)$. Successive  applications of the Cauchy-Schwarz inequality together with the Fubini-Tonelli theorem yield
	\begin{align}
	\| A\|^2_{L^2} &=\int_0^T \left| \int_{0}^T \int_t^{t+h} \left(\dot{C}_r(s,u)-\dot{C}_t(s,u)\right) dr f(u)du \right |^2 ds \\
	&\leq  h \|f\|^2_B   \int_t^{t+h}  \int_0^T \int_{t}^T \left | \dot{C}_r(s,u)-\dot{C}_t(s,u)\right|^2  du  dsdr  
	\end{align}
	Therefore, 
	\begin{align}
	\frac{1}{h} \| \bold{C}_{t+h}-\bold{C}_{t} -h \dot{\bold{C}}_{t} \|_{\rm{op}} \leq \int_t^{t+h}  \int_0^T \int_{0}^T  \left|\dot{C}_r(s,u)-\dot{C}_t(s,u)\right|^2  du  dsdr.  
	\end{align}
	The right hand  side goes to $0$ by virtue of \eqref{eq:assumptionkerneldiff},  which ends the proof.}
\end{proof}

\subsection{A first representation}
By construction the process $XX^\top$ is $\S^d_+$--valued and  its   Laplace  transforms can be deduced from Theorems~~\ref{T:charzz} and \ref{T:char2ZZ}. Indeed, using the vectorization operator $\vecop$, which stacks the column of a $d\times m$--matrix $A$ one underneath another in a vector of dimension $N=d m$,  see Appendix~\ref{A:matrix}, the study of the  matrix valued process $X$ reduces to that of the $\R^{dm}$-valued Gaussian process $Z=\vecop(X)$ as done in Section~\ref{S:quadratic}. 

The following theorem represents the main result of the paper. 
 \begin{theorem}\label{T:charXX}  	Let $X$ be the $d\times m$--matrix valued process defined in \eqref{eq:volterragaussian} for some Volterra kernel $K$ satisfying \eqref{eq:assumptionK} and \eqref{eq:assumptionkerneldiff}.   Fix $t \leq T$.
	For any $u \in \S^d_+$, 
	\begin{align}\label{eq:charXX}
\!\!\!	\E\left[\exp\left(- \tr\left(    u X_TX_T^\top \right ) \right) \Mid \Fc_t \right] = \frac {\exp\left(-\tr\left(   u \left(I_d + 2C_{t}(T,T)u \right)^{-1}  g_t(T)g_t(T)^\top \right)\right)}{\det\left(I_d + 2 C_{t}(T,T)u\right)^{m/2}}.\;\,\;\;
	\end{align}
For any  $w \in \S^d_+$,  the Laplace transform
	\begin{align*}
	 \mathcal L_{t,T}(w)=\E\left[ \exp \left( -\int_t^T \tr\left( w X_s    X_s^\top\right)ds \right) \Mid \Fc_t\right] ,
	\end{align*} 
	is given by 
		\begin{align}\label{eq:L_tT}
	\mathcal L_{t,T}(w) =\exp\left(\phi_{t,T}  +  \int_{(t,T]^2} \tr\left(  g_t(s)^\top \Psi_{t,T} (ds,du) g_t(u)\right)\right),
	\end{align} 
	where $(\phi,\Psi)$ are defined by 
		\begin{align}
	\dot \phi_{t,T} &=   -  m\int_{(t,T]^2} \tr\left(  \Psi_{t,T}(ds,du) K(u,t) K(s,t)^\top  \right), \quad 	\phi_{T,T}= 0,  \label{eq:phiwishart}  \\
	\Psi_{t,T}(ds,du) &= -w  \delta_{\{s=u\}} (ds,du) - \sqrt w R^w_{t,T}(s,u) \sqrt w  ds du, \quad \mbox{on} \quad [t,T],  \label{eq:PSIwishart}
	\end{align}
	where $R^w_{t,T}$ is the $d\times d$--matrix valued resolvent of $(-2\sqrt w  C_t\sqrt w )$, with $C_t$  the conditional covariance function \eqref{eq:meanvariance} and $g_t$ the conditional mean given by \eqref{eq:meanconditional}. 	In particular,   $t\mapsto \Psi_{t,T}$ solves the Riccati equation with moving boundary
	\begin{align}
	\dot \psi_{t,T} &= 2 \Psi_{t,T} \star \dot C_{t} \star \Psi_{t,T} \quad \mbox{on } (t,T]^2  \quad a.e., \label{eq:RiccW00}\\
	\psi_{t,T}(t,\cdot)&=\psi_{t,T}(\cdot,t)^\top =0 \quad \,\;\,\,\,\, \mbox{on } \, [t,T] \,\, \quad a.e.,\label{eq:RiccW01}
	\end{align} 
	where $\psi_{t,T}(s,u)=\sqrt w R^w_{t,T}(s,u) \sqrt w $.
\end{theorem}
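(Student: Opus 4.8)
The plan is to reduce Theorem~\ref{T:charXX} to the already-established results of Section~\ref{S:quadratic} by vectorization, and then unravel the Kronecker-product bookkeeping. First I would set $Z=\vecop(X)$, which is an $\R^N$-valued Gaussian process with $N=dm$, mean function $\vecop(g_0(\cdot))$ and, since $X_t=g_0(t)+\int_0^tK(t,s)dW_s$ with $W$ a $d\times m$ Brownian motion, covariance kernel $C_0^Z(s,u)=\int_0^{s\wedge u}\bigl(I_m\otimes K(s,r)\bigr)\bigl(I_m\otimes K(u,r)\bigr)^\top dr = I_m\otimes C_t(s,u)$ (using $\vecop(AB)=(B^\top\otimes A)\vecop(B)$ with the $d\times d$ kernel $K$ acting on each column). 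Lemma~\ref{L:Ccont} already supplies the continuity and boundedness \eqref{eq:boundC} of $C_t$ and the strong differentiability of $t\mapsto\bold C_t$ under \eqref{eq:assumptionK} and \eqref{eq:assumptionkerneldiff}; since the Kronecker factor $I_m$ is constant these properties transfer verbatim to $C^Z$, so the hypotheses of Theorems~\ref{T:charzz} and \ref{T:char2ZZ} hold for $Z$.

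Next I would translate the two quadratic forms. For the terminal law, $\tr(uX_TX_T^\top)=\vecop(X_T)^\top(I_m\otimes u)\vecop(X_T)=Z_T^\top u_Z Z_T$ with $u_Z=I_m\otimes u\in\S^N_+$, so Theorem~\ref{T:charzz} applied with this $u_Z$ gives the conditional Laplace transform in terms of $\det(I_N+2C^Z_t(T,T)u_Z)^{1/2}$ and $g^Z_t(T)^\top u_Z(I_N+2C^Z_t(T,T)u_Z)^{-1}g^Z_t(T)$. Using $C^Z_t(T,T)=I_m\otimes C_t(T,T)$, $u_Z=I_m\otimes u$ and the multiplicativity of $\otimes$, one has $I_N+2C^Z_t(T,T)u_Z=I_m\otimes(I_d+2C_t(T,T)u)$, hence $\det(\,\cdot\,)=\det(I_d+2C_t(T,T)u)^m$ and the inverse is $I_m\otimes(I_d+2C_t(T,T)u)^{-1}$; reshaping the vector quadratic form back into a trace over $\R^{d\times m}$ then produces $\tr\bigl(u(I_d+2C_t(T,T)u)^{-1}g_t(T)g_t(T)^\top\bigr)$, which is \eqref{eq:charXX} with exponent $m/2$. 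The same device with $w_Z=I_m\otimes w$ turns $\int_t^T\tr(wX_sX_s^\top)ds$ into $\int_t^T Z_s^\top w_Z Z_s\,ds$, so Theorem~\ref{T:char2ZZ} yields \eqref{eq:L_tT}–\eqref{eq:PSIwishart} once we check that the resolvent and $\phi$ respect the Kronecker structure.

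The key structural point is that the resolvent $R^{w_Z}_{t,T}$ of $(-2\sqrt{w_Z}C^Z_t\sqrt{w_Z})=-2I_m\otimes(\sqrt w C_t\sqrt w)$ is itself of the form $I_m\otimes R^w_{t,T}$, where $R^w_{t,T}$ is the $d\times d$-valued resolvent of $(-2\sqrt w C_t\sqrt w)$: this follows because the resolvent equation \eqref{eq:resequation} is linear and the operator $I_m\otimes\bold C_t$ acts blockwise, so the ansatz $I_m\otimes R^w_{t,T}$ solves it and uniqueness of the Fredholm resolvent (guaranteed here since $\|2\sqrt w\bold C_t\sqrt w\|$ is finite, or simply by the eigenexpansion \eqref{eq:resolventdef}) closes the argument. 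Consequently $\psi^Z_{t,T}=-\sqrt{w_Z}R^{w_Z}_{t,T}\sqrt{w_Z}=I_m\otimes(-\sqrt w R^w_{t,T}\sqrt w)$, giving \eqref{eq:PSIwishart}, and the trace appearing in $\dot\phi$ from Theorem~\ref{T:char2ZZ}, namely $\int_{(t,T]^2}\tr(\Psi^Z_{t,T}(ds,du)K_T^Z(u,t)K_T^Z(s,t)^\top)$ with $K^Z_T(t,s)=I_m\otimes K(t,s)$, collapses via $\tr(I_m\otimes A)=m\tr(A)$ to $m\int_{(t,T]^2}\tr(\Psi_{t,T}(ds,du)K(u,t)K(s,t)^\top)$, which is \eqref{eq:phiwishart}. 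The Riccati equations \eqref{eq:RiccW00}–\eqref{eq:RiccW01} are then inherited directly from \eqref{eq:Ricc00}–\eqref{eq:Ricc01} by stripping off the common $I_m$ factor, noting $\dot C^Z_t=I_m\otimes\dot C_t$ and that the $\star$-product commutes with $I_m\otimes(\cdot)$.

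The only genuine obstacle is the bookkeeping in the second and third paragraphs: one must be careful that $\vecop$ conjugates $K(t,s)$ acting on a $d\times m$ matrix into $I_m\otimes K(t,s)$ acting on $\R^{dm}$ (the identity block sits on the left because $K$ multiplies each column from the left), and correspondingly that traces over $\R^{dm\times dm}$ factor as $m$ times traces over $\R^{d\times d}$. Once the Kronecker dictionary is fixed, every assertion of the theorem — including the Volterra truncation of the integrals to $[0,s\wedge u]$, which is automatic since $K$ is of Volterra type so $C_t(s,u)=C_t(s\wedge u,s\wedge u)$-type support holds and $\psi_{t,T}(s,u)=0$ when $s\wedge u\le t$ — drops out of Theorems~\ref{T:charzz} and \ref{T:char2ZZ} mechanically. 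The absolute continuity of $t\mapsto C^Z_t(s,u)$ with density $-I_m\otimes K(s,t)K(u,t)^\top$, needed to invoke the Riccati form, is exactly \eqref{eq:Ctdensity} tensored with $I_m$.
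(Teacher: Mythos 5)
Your proposal is correct and follows essentially the same route as the paper: vectorize $X$ into $Z=\vecop(X)$, observe that the kernel, covariance and quadratic forms all acquire an $I_m\otimes(\cdot)$ structure, apply Theorems~\ref{T:charzz} and \ref{T:char2ZZ} (with Lemma~\ref{L:Ccont} supplying their hypotheses), and identify the vectorized resolvent as $I_m\otimes R^w_{t,T}$ via the resolvent equation and uniqueness, exactly as in the paper's proof. The only blemishes are typographical (e.g.\ the vec identity should read $\vecop(AXB)=(B^\top\otimes A)\vecop(X)$), not mathematical.
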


\begin{proof} %We only prove the expression for $\int_t^T \tr \left( w X_sX_s^\top\right) ds$.  
	Setting $Z= \vecop (X)$ and  $\bm W=\vecop(W)$, an application of the vectorization operator $\vecop$ on both sides of the $d\times m$--matrix valued equation \eqref{eq:volterragaussian} yields the $N:=dm$ dimensional vector valued Gaussian process
\begin{align}\label{eq:vecX}
Z_s =  \vecop \left( {g_0}(s)\right) + \int_0^s \mathcal K(s,u)  d\bm W_u.
\end{align}
where $\mathcal K$ is the  $\R^{N\times N}$ kernel
$$   \mathcal K: (s,u) \mapsto (I_m \otimes  K(s,u))$$
coming from the relation \eqref{eq:kronecker1}, with $\otimes$ the Kronecker product. Whence, the conditional mean and covariance functions of $Z$ are given respectively by $\vecop(g_t)$ and 
\begin{align}\label{eq:mathcalC}
{\mathcal C}_t(s,u) =  (I_m \otimes  C_t(s,u) ), \quad u,s\leq T.
\end{align}
In addition, due to \eqref{eq:kronecker2}, 
\begin{align}
\tr(wX X^\top)=Z^\top (I_m \otimes w) Z, \quad  w \in \S^N.
\end{align}  
$\bullet$ We first prove \eqref{eq:charXX}.  Fix $t  \leq T$ and $u\in \S^d_+$.  An application of Theorem \ref{T:charzz}   yields
 	\begin{align}
\E\left[\exp\left(-  Z_T^\top (I_m \otimes u) Z_T  \right) \Mid \Fc_t \right] = \frac {\exp\left(- H_t(T) \right)}{\det\left(I_N + 2 \mathcal C_{t}(T,T)(I_m \otimes u) \right)^{1/2}},
\end{align}
with 
\begin{align}
H_t(T)&=\vecop(g_t(T))^\top  (I_m \otimes u) \left(I_N+ 2\mathcal C_{t}(T,T)(I_m \otimes u)  \right)^{-1}  \vecop(g_t(T)).
\end{align}
We observe that by \eqref{eq:mathcalC} and successive applications of the product rule  \eqref{eq:kronecker3}
\begin{align*}
\left(I_N+ 2\mathcal C_{t}(T,T)(I_m \otimes u)  \right)^{-1} &= \left((I_m\otimes I_d)+ 2(I_m \otimes C_{t}(T,T))(I_m \otimes u)  \right)^{-1}\\
&= \left(I_m\otimes \left(I_d+ 2  C_{t}(T,T)u\right)  \right)^{-1}\\
&= \left(I_m\otimes \left(I_d+ 2  C_{t}(T,T)u\right)^{-1}  \right)
\end{align*}
where the last equality follows from \eqref{eq:kronecker5}.  Another application of   \eqref{eq:kronecker3} combined with  \eqref{eq:kronecker2} yields that 
\begin{align}
H_t(T)=\tr\left( u\left(I_d+ 2  C_{t}(T,T)u\right)^{-1} g_t(T) g_t(T)^\top  \right).
\end{align}
Similarly,
\begin{align}
\det\left(I_N + 2 \mathcal C_{t}(T,T)(I_m \otimes u) \right) &= \det \left( I_m \otimes \left(I_d +  2  C_{t}(T,T)u \right)\right)\\
&=\det \left( I_m \otimes \left(I_d +  2  C_{t}(T,T)u \right)\right) \\
&= \det \left(I_d +  2  C_{t}(T,T)u \right)^m
\end{align}
where we used \eqref{eq:kronecker6} for the last identity.  Combining the above proves  \eqref{eq:charXX}.\\
$\bullet$ We now prove \eqref{eq:L_tT}. Fix $t  \leq T$ and $w\in \S^d_+$.
An application of Theorem~\ref{T:char2ZZ}, justified by Lemma~\ref{L:Ccont},  yields that 
\begin{align}\label{eq:tempLcal}
	\mathcal L_{t,T}(w) =\exp\left(\phi_{t,T}  +  \int_{(t,T]^2} \vecop(g_t(s))^\top \widetilde{\Psi}_{t,T} (ds,du) \vecop(g_t(u))\right),
\end{align}
where 
\begin{align}
	\dot \phi_{t,T} &=  -  \int_{(t,T]^2} \tr\left(  \widetilde \Psi_{t,T}(ds,du) \mathcal K(u,t) \mathcal  K(s,t)^\top \right), \quad 	\phi_{T,T}= 0, \label{eq:phitempwih} \\
\widetilde \Psi_{t,T}(ds,du)&= - (I_m \otimes w)\delta_{s=u} (ds,du) -  (I_m \otimes \sqrt w)  \widetilde{\mathcal{R}}^w_{t,T}(s,u)  (I_m \otimes \sqrt w)  ds du, \nonumber
\end{align}
and  $ \widetilde{\mathcal{R}}^w_{t,T}$ is the resolvent of $2 {\mathcal C}^w_t(s,u)$. The claimed expressions now follows provided we prove that 
\begin{align}\label{eq:resolventmathcal}
\widetilde {\mathcal R}^w_{t,T}=\left( I_m \otimes R^w_{t,T} \right),
\end{align}
where $R^w_{t,T}$ is the resolvent kernel of  $2 {C}^w_t(s,u)$. Indeed, if this is the case, then, using the the product rule  \eqref{eq:kronecker3}  we get that 
\begin{align}\label{eq:temcalPSi}
\widetilde \Psi_{t,T}=\left( I_m \otimes \Psi_{t,T}\right),
\end{align}
where $\Psi_{t,T}$ is given by \eqref{eq:PSIwishart}, so that, by \eqref{eq:kronecker2}, 
$$ \vecop(g_t(s))^\top \widetilde{\Psi}_{t,T} (ds,du) \vecop(g_t(u)) = \tr\left( g_t(s)^\top  {\Psi}_{t,T} (ds,du) g_t(u) \right). $$
  Plugging \eqref{eq:temcalPSi} back in  \eqref{eq:phitempwih} and using the identity \eqref{eq:kronecker4} yields  \eqref{eq:phiwishart}. Combining the above shows that \eqref{eq:tempLcal} is equal to \eqref{eq:L_tT}.
We now prove \eqref{eq:resolventmathcal}. For this, we 
define  ${\mathcal R}^w_{t,T}=\left( I_m \otimes R^w_{t,T} \right).$ 
Then,  it follows from the resolvent equation \eqref{eq:resequation} of $R^w_{t,T}$ and the product rule  \eqref{eq:kronecker3} that  ${\mathcal R}^w_{t,T}$ solves 
\begin{align*}
 {\mathcal  R}^w_{t,T} =-2\mathcal  C_{t} -2  {\mathcal  R}^w_{t,T}\star \mathcal  C_{t}, \quad {\mathcal  R}^w_{t,T}\star  \mathcal  C_{t}  = \mathcal  C_{t}  \star {\mathcal  R}^w_{t,T}, 
\end{align*}
showing that ${\mathcal  R}^w_{t,T}$ is a resolvent of  $(-2\mathcal  C_{t})$. By uniqueness of the resolvent,  see \citet[Lemma 9.3.3]{GLS:90}, \eqref{eq:resolventmathcal} holds. \\
$\bullet$ Finally, the Riccati equations~\eqref{eq:RiccW00}--\eqref{eq:RiccW01} follow along the same lines by invoking Theorem~\ref{T:char2ZZ}.
\end{proof}

\begin{remark}\label{R:approx1}
Proposition~\ref{eq:Papprox1} can be applied to the vectorized Gaussian  process  $Z=\vecop(X)$ given by \eqref{eq:vecX} to get an approximation formula for 	$$\E\left[ \exp \left( -\int_t^T \tr\left( w X_sX_s^\top   \right)ds \right) \Mid \Fc_t\right]= \E\left[ \exp \left( - \int_t^T \tr\left( Z_s^\top (I_m \otimes w) Z_s   \right)ds \right) \Mid \Fc_t\right].$$ 
{To illustrate the convergence, we consider $d=m$, $K\equiv I_d$ and   $g_0\equiv X_0 \in \R^{d\times d}$.
	In this case, the Laplace transform of the integrated process $XX^\top$ is given in the following closed form, see \citet[Theorem 1]{gnoatto2014explicit},
	$$\E\left[ \exp \left( -\int_0^T \tr\left( w X_sX_s^\top   \right)ds \right)\right]= \exp \left( - \phi(T) - \tr\left( \psi(T) X_0 X_0^{\top}\right) \right),$$
	with 
	\begin{align*}
	\phi(T)&=\frac d 2 \tr(\log(\cosh(\sqrt{2w}T)))\\
	\psi(T)&=\frac{1}{2}\left( \cosh(\sqrt{2w} T)\right)^{-1} \left( \sqrt{2w} \sinh(\sqrt{2w} T) \right).
 	\end{align*}
 	We set  $d=2$,
 	$$X_0=\left(\begin{array}{cc}
 	0.1	& 0.3 \\
 	0.2	& 0.4\\
 	\end{array}\right), \quad w =\left(\begin{array}{cc}
 	1	& 0.5 \\
 	0.5	& 1\\
 	\end{array}\right),$$
 	and we test the approximation of Proposition~\ref{eq:Papprox1}  with the left Riemann sum and the Gauss-Legendre quadrature applied to the vectorized Gaussian  process  $Z=\vecop(X)$. The convergence is illustrated in Table~\ref{tablewishartapprox2} as the  discretization step $n$ varies. 
 	\begin{table}[h!]
 		\centering  % used for centering table
 		\begin{tabular}{ccc } % centered columns (4 columns)
 			\hline
 			%$H$   & 0.1 & 0.3 & 0.5 & 0.7 & 0.9 \\ 
 			%	[0.5ex] % inserts table
 			%heading
 			\hline                 % inserts single horizontal line
 			Reference value  &    0.1749568	   \\ 
 			  &   \\ 
 			%	 &  (0.49720, 0.50366) & (0.60388,0.61142) &  & (0.72142, 0.72860) & (0.75626 , 0.76396)  \\
 			$n$ &   Riemann & Gauss-Legendre     \\ 
 			10 & 0.1592312  & 0.1756507\\ 
 			20 & 0.1666600 &  0.1751393\\  
 			30 & 	0.1693255 &   0.1750393\\ 
 			50 & 0.1715292 &  0.1749869\\ 
 			100 &  0.1732245 & 0.1749645 \\ 
 			200 &  0.1740860 & 0.1749588 \\ 
 			500 & 0.1746074  & 0.1749572\\ 
 			1000 & 0.1747819  & 0.1749569\\ 
 			\hline
 		\end{tabular}
 		\caption{Approximation with  $n$ ranging between $10$ and $1000$. }
 		\label{tablewishartapprox2} % is used to refer this table in the text
 	\end{table}
}
\end{remark}

\subsection{A second representation for certain convolution kernels}\label{S:approx2}
	
	The aim of this section is to link the Volterra Wishart distribution with conventional linear-quadratic processes \citep{chen2004quadratic,cheng2007linear}  for the special case of  convolution kernels:
	\begin{align}\label{eq:Klaplace}
	K(t,s)=k(t-s)\bm 1_{s \leq t}  \quad \mbox{ such that } \quad  k(t) = \int_{\R_+} e^{-xt} \mu(dx), \quad t > 0,
	\end{align}
	where $\mu$ is a $d\times m $--measure of locally bounded variation satisfying 
		\begin{align}\label{eq:condmu}
	\int_{\R_+} \left(1 \wedge x^{-1/2}\right) |\mu|(dx) < \infty,
	\end{align}
	and $|\mu|$ is the total variation of the measure, as defined in \citet[Definition~3.5.1]{GLS:90}.  The condition  \eqref{eq:condmu} ensures that $k$ is locally square integrable, see \citet[Lemma A.1]{AJMP19a}. This is inspired by the approach initiated in  \citet{carmona2000approximation}   and   generalized to stochastic Volterra equations in  \citet{aj2019markovian, cuchiero2019markovian, harms2019affine}.

 {Several kernels of interest satisfy \eqref{eq:Klaplace}-\eqref{eq:condmu} such as weighted sums of exponentials and the Riemann-Liouville fractional kernel 	
$K_{RL}(t)=\frac{t^{H-1/2}}{\Gamma(H+1/2)},$ for  $H \in (0,1/2)$. We refer to  \citet[Example 2.2]{AJMP19a} for more examples.}

A straightforward application of stochastic Fubini's theorem provides the representation of
	$(X_t,g_t)_{t \geq 0}$  in terms of $\mu$  and the possibly infinite system of $d\times m$-matrix-valued Ornstein-Uhlenbeck processes
	$$  Y_t(x) = \int_0^t e^{-x(t-s)} dW_s, \quad  t\geq 0, \quad x \in \R_+,$$
	see for instance \citet[Theorem~2.3]{AJMP19a}.
	\begin{lemma}
		Assume that $K$ is of the form \eqref{eq:Klaplace} with $\mu$ satisfying \eqref{eq:condmu}, then 
		\begin{align*}
		X_t &= g_0(t)+ \int_{\R_+} \mu(dx) Y_t(x),  &&t\leq T,\\
		g_t(s) &= g_0(s) +  \int_{\R_+} e^{-x(s-t)}\mu(dx)Y_t(x),  &&t\leq s \leq T.
		\end{align*}  
	\end{lemma}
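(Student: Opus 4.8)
The plan is to apply the stochastic Fubini theorem to interchange the $\mu$-integral with the stochastic integral against $W$, and then recognize the resulting expression as the claimed superposition of Ornstein--Uhlenbeck processes. First I would write, for $t \leq T$,
\begin{align*}
X_t - g_0(t) = \int_0^t K(t,s)\, dW_s = \int_0^t \left( \int_{\R_+} e^{-x(t-s)} \mu(dx) \right) dW_s,
\end{align*}
using the representation \eqref{eq:Klaplace} of $k$ and the fact that $K(t,s) = k(t-s)$ for $s \leq t$. The goal is then to exchange the order of integration to obtain $\int_{\R_+} \mu(dx) \int_0^t e^{-x(t-s)}\, dW_s = \int_{\R_+}\mu(dx)\, Y_t(x)$.

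To justify the interchange, I would invoke a version of the stochastic Fubini theorem (for instance the one in \citet[Theorem 2.2]{veraar2012stochastic} or \citet[Theorem 65, Chapter IV]{protter2005stochastic}, applied componentwise with respect to the total variation measure $|\mu|$). The integrability condition to check is essentially
\begin{align*}
\int_{\R_+} |\mu|(dx) \left( \int_0^t e^{-2x(t-s)}\, ds \right)^{1/2} < \infty,
\end{align*}
and since $\int_0^t e^{-2x(t-s)}\, ds = (1 - e^{-2xt})/(2x) \leq \min(t, 1/(2x))$, this bound is controlled by a constant times $\int_{\R_+} (1 \wedge x^{-1/2})\, |\mu|(dx)$, which is finite by assumption \eqref{eq:condmu}. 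This gives the first identity. For the forward process, I would start from the conditional-mean formula $g_t(s) = g_0(s) + \int_0^t K(s,u)\, dW_u$ valid for $t \leq s$ (the Volterra analogue of the decomposition in Section~\ref{S:quadratic}), write $K(s,u) = k(s-u) = \int_{\R_+} e^{-x(s-u)}\mu(dx)$ for $u \leq s$, and apply the same stochastic Fubini argument to get
\begin{align*}
g_t(s) - g_0(s) = \int_0^t \left( \int_{\R_+} e^{-x(s-u)}\mu(dx) \right) dW_u = \int_{\R_+} \mu(dx)\, e^{-x(s-t)} \int_0^t e^{-x(t-u)}\, dW_u = \int_{\R_+} e^{-x(s-t)}\mu(dx)\, Y_t(x),
\end{align*}
where the factor $e^{-x(s-t)}$ has been pulled out of the inner stochastic integral (deterministic in $u$) and the remaining integral is exactly $Y_t(x)$.

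The only real subtlety — and the step I expect to require the most care — is the rigorous justification of the stochastic Fubini interchange when $\mu$ is merely a signed (matrix-valued) measure of locally bounded variation rather than a nonnegative one: one must pass to $|\mu|$ for the integrability estimate and then recombine the Jordan-type decomposition, and one must make sure the bound $\int_0^t e^{-2x(t-s)}\, ds \leq 1 \wedge (2x)^{-1}$ is matched correctly against \eqref{eq:condmu} (this is precisely the point of \citet[Lemma A.1]{AJMP19a}, which already establishes local square-integrability of $k$). Everything else is routine: $Y_t(x)$ is well-defined as an Itô integral for each $x$ since $\int_0^t e^{-2x(t-s)}\, ds < \infty$, and the matrix products $\mu(dx) Y_t(x)$ make sense dimensionally because $\mu$ is $d\times m$-valued while $Y_t(x)$ is $d\times m$-valued with the Brownian motion being $d \times m$ — here one should read $Y_t(x)$ as the $\R^{d\times m}$-valued scalar-type Ornstein--Uhlenbeck process $\int_0^t e^{-x(t-s)}\, dW_s$ and $\mu(dx)$ acting by left matrix multiplication, consistently with the defining equation \eqref{eq:volterragaussian}. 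I would close by remarking that both displayed identities hold $\P$-a.s.\ simultaneously for all $t \leq s \leq T$ after choosing continuous modifications, which is legitimate by the mean-square continuity established around \eqref{eq:meansquareN}.
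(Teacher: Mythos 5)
Your proposal is correct and matches the paper's approach exactly: the paper itself gives no detailed argument, stating only that the lemma follows from "a straightforward application of stochastic Fubini's theorem" (citing \citet[Theorem~2.3]{AJMP19a}), and your write-up simply carries out that interchange with the integrability check $\int_{\R_+}\bigl(1\wedge x^{-1/2}\bigr)|\mu|(dx)<\infty$ matched against $\int_0^t e^{-2x(t-s)}ds\leq 1\wedge(2x)^{-1}$, plus the factorization $e^{-x(s-u)}=e^{-x(s-t)}e^{-x(t-u)}$ for the forward process. The only cosmetic remark is that for the product $\mu(dx)Y_t(x)$ to be well-defined with $Y_t(x)\in\R^{d\times m}$, the measure $\mu$ must be $d\times d$-valued (consistent with $K:[0,T]^2\to\R^{d\times d}$); the "$d\times m$-measure" phrasing is an inconsistency in the paper's own statement, not a gap in your argument.
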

	
	Combined with \eqref{eq:L_tT}, we get an exponentially quadratic representations of the characteristic function of $XX^\top$ in terms of the process $Y$.
	\begin{theorem}
		Assume that $K$ is of the form \eqref{eq:Klaplace} with $\mu$ satisfying \eqref{eq:condmu} and fix $w\in \S^d_+$.  Then,
		\begin{align}
		\mathcal L_{t,T}(w) = \exp\Bigg( \Theta_{t,T} &+ 2 \tr\left( \int_{\R_+}\Lambda_{t,T}(x)^\top \mu(dx) Y_t(x) \right) \\
		&\quad \quad  +\tr\left( \int_{\R_+^2}  Y_t(x)^\top \mu(dx)^\top   \Gamma_{t,T}(x,y) \mu(dy) Y_t(y) \right) \Bigg) \label{eq:charXXaffine},
		\end{align}
		where  $t\mapsto (\Theta_{t,T},\Lambda_{t,T},\Gamma_{t,T})$ are given by
		\begin{align}
		\Theta_{t,T}&= \int_{(t,T]^2}\tr \left(  g_0(s)^\top \Psi_{t,T}(ds,du)g_0(s)  \right) + \phi_{t,T}, \label{eq:Ricmu1}\\
		\Lambda_{t,T}(x)&= \int_{(t,T]^2}  e^{-x(s-t)} \Psi_{t,T}(ds,du)g_0(u), \label{eq:Ricmu2}\\
		\Gamma_{t,T}(x,y)&= \int_{(t,T]^2} e^{-x(s-t)} \Psi_{t,T}(ds,du)  e^{-y(u-t)} , \label{eq:Ricmu3}
		\end{align}
		with $(\phi,\Psi)$ as in \eqref{eq:phiwishart}-\eqref{eq:PSIwishart}.
	\end{theorem}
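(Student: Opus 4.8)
The plan is to substitute the representation of the forward process from the preceding lemma into the first representation \eqref{eq:L_tT} and to expand the resulting quadratic form. Note first that Theorem~\ref{T:charXX} applies here, since \eqref{eq:Klaplace}--\eqref{eq:condmu} imply that $K$ satisfies \eqref{eq:assumptionK} and \eqref{eq:assumptionkerneldiff} (the former because a convolution kernel with $k\in L^2_{\rm loc}$ is of bounded and continuous type in $L^2$, the latter because $\sup_{t\le T}\int_t^T|k(s-t)|^2\,ds\le\int_0^T|k(s)|^2\,ds<\infty$). Recalling that, for $t\le s\le T$, $g_t(s)=g_0(s)+\int_{\R_+}e^{-x(s-t)}\mu(dx)Y_t(x)$, I would plug this into $\int_{(t,T]^2}\tr\big(g_t(s)^\top\Psi_{t,T}(ds,du)g_t(u)\big)$; bilinearity splits the integrand into four contributions, namely a piece involving only $g_0$, two ``mixed'' pieces pairing $g_0$ with an integral against $\mu$, and a piece quadratic in $Y_t(\cdot)$. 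The $g_0$-only piece, augmented by $\phi_{t,T}$, coincides with $\Theta_{t,T}$ from \eqref{eq:Ricmu1}.

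For the remaining three pieces I would invoke Fubini's theorem to pull the integrals $\int_{\R_+}\mu(dx)$ (resp.\ $\int_{\R_+^2}\mu(dx)\mu(dy)$) outside the integration against $\Psi_{t,T}(ds,du)$. The quadratic piece then becomes $\tr\big(\int_{\R_+^2}Y_t(x)^\top\mu(dx)^\top\Gamma_{t,T}(x,y)\mu(dy)Y_t(y)\big)$ with $\Gamma_{t,T}$ as in \eqref{eq:Ricmu3}, directly from the definition $\Gamma_{t,T}(x,y)=\int_{(t,T]^2}e^{-x(s-t)}\Psi_{t,T}(ds,du)e^{-y(u-t)}$. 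For the two mixed pieces one uses the symmetry of the matrix-valued measure $\Psi_{t,T}$: since $C_t(s,u)=C_t(u,s)^\top$, the kernel $\sqrt w\,C_t\,\sqrt w$ is symmetric and so is its Fredholm resolvent, $R^w_{t,T}(s,u)=R^w_{t,T}(u,s)^\top$, whence $\Psi_{t,T}(ds,du)^\top=\Psi_{t,T}(du,ds)$, the Dirac part being symmetric in $s\leftrightarrow u$. Combining this with the identity $\tr(A)=\tr(A^\top)$ and a relabelling $s\leftrightarrow u$, each of the two mixed pieces is seen to equal $\int_{\R_+}\tr\big(\Lambda_{t,T}(x)^\top\mu(dx)Y_t(x)\big)$ with $\Lambda_{t,T}$ as in \eqref{eq:Ricmu2}, so that their sum is the factor-$2$ linear term in \eqref{eq:charXXaffine}. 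Gathering the three contributions and exponentiating yields \eqref{eq:charXXaffine}.

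The one genuine technical point, and the main obstacle, is to justify these applications of Fubini's theorem, i.e.\ the absolute convergence of the double and triple integrals involved. Here one uses the explicit structure $\Psi_{t,T}(ds,du)=-w\,\delta_{\{s=u\}}(ds,du)-\sqrt w\,R^w_{t,T}(s,u)\,\sqrt w\,ds\,du$, together with $R^w_{t,T}\in L^2([t,T]^2,\R^{d\times d})$ (the resolvent is a genuine $L^2$ kernel by standard resolvent theory, the associated integral operator being Hilbert--Schmidt since $C_t$ is continuous and bounded by Lemma~\ref{L:Ccont}). For the $\delta$-part one bounds $\int_t^T e^{-x(s-t)}\,ds\le (T-t)\wedge x^{-1}$, and for the density part the Cauchy--Schwarz inequality on $L^2([t,T]^2)$ gives $\int_{(t,T]^2}e^{-x(s-t)}|R^w_{t,T}(s,u)|\,e^{-y(u-t)}\,ds\,du\le \|R^w_{t,T}\|_{L^2}\big((T-t)\wedge(2x)^{-1}\big)^{1/2}\big((T-t)\wedge(2y)^{-1}\big)^{1/2}$. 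Since both $(T-t)\wedge x^{-1}$ and $\big((T-t)\wedge(2x)^{-1}\big)^{1/2}$ are bounded by $c\,(1\wedge x^{-1/2})$ for a constant $c=c(T)$, and since $\int_{\R_+}(1\wedge x^{-1/2})|\mu|(dx)<\infty$ by \eqref{eq:condmu}, while $g_0$ is bounded on $[0,T]$ and $x\mapsto Y_t(x)$ is $\mu$-integrable (as recalled before the lemma), all interchanges are legitimate and the identities above hold $\P$-a.s.\ in $\omega$. This would complete the argument.
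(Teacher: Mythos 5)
Your proposal is correct and follows exactly the route the paper intends: the paper states this theorem as an immediate consequence of substituting the lemma's representation $g_t(s)=g_0(s)+\int_{\R_+}e^{-x(s-t)}\mu(dx)Y_t(x)$ into the first representation \eqref{eq:L_tT} and expanding the quadratic form, which is precisely your decomposition into the $g_0$-only, mixed, and quadratic pieces. Your additional care with the symmetry $\Psi_{t,T}(ds,du)^\top=\Psi_{t,T}(du,ds)$ for the factor $2$ and with the Fubini justification via \eqref{eq:condmu} and the $L^2$-bound on the resolvent supplies details the paper leaves implicit, and both are sound.
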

	
A direct differentiation of $(\Theta,\Lambda,\Gamma)$ combined with the Riccati equations~\eqref{eq:RiccW00}--\eqref{eq:RiccW01} for $(\phi,\Psi)$ yield a  system of Riccati equation for $(\Theta,\Lambda,\Gamma)$. 
	\begin{proposition}
		The functions $t\mapsto (\Theta_{t,T},\Lambda_{t,T},\Gamma_{t,T})$ given by \eqref{eq:Ricmu1}, \eqref{eq:Ricmu2} and \eqref{eq:Ricmu3} solve the system of  backward Riccati  equations
		\begin{align}
		\dot \Theta_{t,T} &= -\mathcal R_0(t,\Lambda_{t,T}, \Gamma_{t,T}),  && \Theta_{T,T}=0 , \label{eq:Theta}\\
			\dot \Lambda_{t,T}(x) &= x  \Lambda_{t,T}(x) - \mathcal R_1(t,\Lambda_{t,T}, \Gamma_{t,T})(x),  && \Lambda_{T,T}(x)=0,  \label{eq:Lambda}\\
		\dot \Gamma_{t,T}(x,y) &= (x+y)  \Gamma_{t,T}(x,y) - \mathcal R_2( \Gamma_{t,T})(x,y),  && \Gamma_{T,T}(x,y)=0,  \label{eq:Gamma} 
		\end{align}	
		where
		\begin{align*}
		\mathcal R_0(t,\Lambda,\Gamma) &= -\tr\left( g_0(t)^\top w g_0(t)\right) +  m \tr \left( \int_{\R_+^2} \Gamma(x,y)\mu(dy) \mu(dx )^\top  \right) \\
		&\quad \; + 2\tr\left(  \left(\int_{\R_+} \Lambda(x)^\top\mu(dx) \right)  \left(\int_{\R_+} \Lambda(y)^\top \mu(dy) \right)^\top \right), \\
		\mathcal R_1(t,\Lambda,\Gamma)(x)&= -wg_0(t)+ 2 \left(\int_{\R_+}  \Gamma(x,x') \mu(dx')\right) \left(\int_{\R_+} \Lambda(y)^\top  \mu(dy)\right)^\top,\\
		\mathcal R_2(\Gamma)(x,y)&= -w + 2 \left(\int_{\R_+} \Gamma(x,x') \mu(dx')\right) \left(\int_{\R_+} \Gamma(y,y') \mu(dy')\right)^\top.
		\end{align*}
	\end{proposition}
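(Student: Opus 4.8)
The plan is to derive \eqref{eq:Theta}--\eqref{eq:Gamma} by differentiating the integral representations \eqref{eq:Ricmu1}--\eqref{eq:Ricmu3} in $t$, using that $(\phi,\Psi)$ solves \eqref{eq:RiccW00}--\eqref{eq:RiccW01} and that the convolution kernel is \emph{separable on exponentials}: since $K(z,t)=k(z-t)=\int_{\R_+}e^{-x(z-t)}\mu(dx)$, the density $\dot C_t$ from \eqref{eq:Ctdensity} factorizes as $\dot C_t(z,z')=-K(z,t)K(z',t)^\top=-\int_{\R_+^2}e^{-x(z-t)}\,\mu(dx)\mu(dy)^\top\,e^{-y(z'-t)}$. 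The terminal conditions $\Theta_{T,T}=\Lambda_{T,T}=\Gamma_{T,T}=0$ are immediate since the domain $(T,T]^2$ is empty.

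For fixed $x,y$, the $t$-dependence of $\Theta_{t,T}$, $\Lambda_{t,T}(x)$, $\Gamma_{t,T}(x,y)$ is of three kinds, which I would treat in turn. First, the explicit weights $e^{-x(s-t)}$ and $e^{-y(u-t)}$ (which $\Theta$ does not carry) produce, upon differentiation, the linear terms $x\Lambda_{t,T}(x)$ and $(x+y)\Gamma_{t,T}(x,y)$. Second, the shrinking of the domain $(t,T]^2$ produces a boundary term: the absolutely continuous part of $\Psi_{t,T}$ vanishes on $\{s=t\}\cup\{u=t\}$ by \eqref{eq:RiccW01} and contributes nothing, whereas the atom $-w\,\delta_{\{s=u\}}$, evaluated at the corner $s=u=t$ where all the exponential weights equal one, yields precisely the affine contributions $\tr(g_0(t)^\top w g_0(t))$, $w g_0(t)$ and $-w$ appearing in $-\mathcal R_0$, $-\mathcal R_1$ and $-\mathcal R_2$. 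Third, the $t$-dependence of $\psi_{t,T}$ and $\phi_{t,T}$ inside the integrals: here one substitutes the Riccati identity \eqref{eq:RiccW00}, namely that the $t$-derivative of the density of $\Psi_{t,T}$ equals $2\,\Psi_{t,T}\star\dot C_t\star\Psi_{t,T}$, together with \eqref{eq:phiwishart} for $\dot\phi_{t,T}$, and inserts the separable form of $\dot C_t$ (respectively of $K(s,t)K(u,t)^\top$). The mechanism is that the central factor $\star\dot C_t\star$ decouples the double $\star$-product into two one-dimensional integrals against $\mu$; using $\int_{(t,T]}\bigl(\int_{(t,T]}e^{-x(s-t)}\Psi_{t,T}(ds,dz)\bigr)K(z,t)\,dz=\int_{\R_+}\Gamma_{t,T}(x,x')\,\mu(dx')$ together with the mirrored identities for the right factor and for the ones carrying $g_0$ --- which use the symmetry $\Psi_{t,T}(s,u)=\Psi_{t,T}(u,s)^\top$ inherited from $C_t$ --- one recovers exactly the quadratic terms $2(\int\Gamma\mu)(\int\Lambda^\top\mu)^\top$ of $\mathcal R_1$, $2(\int\Gamma\mu)(\int\Gamma\mu)^\top$ of $\mathcal R_2$, and, using cyclicity of the trace for the $\dot\phi$ contribution, the terms $m\,\tr(\int_{\R_+^2}\Gamma\,\mu\mu^\top)$ and $2\,\tr((\int\Lambda^\top\mu)(\int\Lambda^\top\mu)^\top)$ of $\mathcal R_0$. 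Summing the three contributions for each of $\Theta,\Lambda,\Gamma$ gives \eqref{eq:Theta}--\eqref{eq:Gamma}.

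The remaining algebra is bookkeeping; the part requiring care is to license the differentiation rigorously. One must (a) justify interchanging $\partial_t$ with the integral over $(t,T]^2$ and with the $\mu$-integrals over $\R_+$ and $\R_+^2$, where the hypothesis \eqref{eq:condmu} enters --- through local square-integrability of $k$ and the uniform bounds of Lemma~\ref{L:Ccont} on $C_t$, hence on $R^w_{t,T}$ and $\Psi_{t,T}$ via the resolvent equation --- to furnish integrable dominating functions; (b) correctly isolate the moving-boundary term, the subtlest point, for which continuity of the density of $\psi_{t,T}$ up to the diagonal together with \eqref{eq:RiccW01} is used to discard its boundary contribution and retain only the atomic one; and (c) invoke Fubini to exchange the orders of the $dz,dz'$ and $\mu(dx'),\mu(dy')$ integrations in the decoupling step. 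Once these interchanges are in place, the identities above complete the proof.
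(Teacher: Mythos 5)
Your proposal is correct and follows exactly the route the paper indicates (it gives no written proof beyond the remark that the system follows by ``direct differentiation of $(\Theta,\Lambda,\Gamma)$ combined with the Riccati equations \eqref{eq:RiccW00}--\eqref{eq:RiccW01}''): you differentiate \eqref{eq:Ricmu1}--\eqref{eq:Ricmu3}, split the derivative into the exponential-weight, moving-boundary, and $\dot\psi_{t,T}$/$\dot\phi_{t,T}$ contributions, and use the separable form $\dot C_t(z,z')=-\int_{\R_+^2}e^{-x(z-t)}\mu(dx)\mu(dy)^\top e^{-y(z'-t)}$ to decouple the $\star$-products into $\mu$-integrals of $\Gamma$ and $\Lambda$. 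All three contributions are identified correctly, including the subtle point that only the atom $-w\,\delta_{\{s=u\}}$ survives at the moving corner while the density part is killed by \eqref{eq:RiccW01}.
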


	Similar Riccati equations to that of $\Gamma$ have appeared in the literature when dealing with convolution kernels of the form \eqref{eq:Klaplace} in the presence of a quadratic structure, see  \citet[Theorem 3.7]{AJMP19a}, \citet[Theorem 1]{alfonsi2013capacitary},  \citet[Lemma~5.4]{harms2019affine}, \citet[Corollary~6.1]{cuchiero2019markovian}. A general existence and uniqueness result for more general equations  has been recently obtained in \citet{AJMP19b}.
	
	\begin{remark}
		The expression~\eqref{eq:charXXaffine} can be re-written in the following compact form 
			\begin{align*}
	\mathcal L_{t,T}(w)= 
		\exp\left( \Theta_{t,T}+ 2\langle  \Lambda_{t,T}, Y_t \rangle_{\mu}	+ \langle  Y_t , \bold{\Gamma}_{t,T} Y_t\rangle_{\mu}   \right) .
		\end{align*}
		where $\bold{\Gamma}_{t,T}$ is the integral operator  acting on $L^1(\mu,\R^{d\times m})$ induced by the kernel ${\Gamma}_{t,T}$:
		\begin{align*}
	( \bold{\Gamma}_{t,T} f) (x) = \int_{\R_+} {\Gamma}_{t,T} (x,y) \mu(dy) f(y),  \quad f \in  L^1(\mu, \R^{d\times m})
		\end{align*}
		and $\langle \cdot,\cdot\rangle_{\mu}$ is the dual pairing
		\begin{align*}
		\langle  f, g\rangle_{\mu} = \tr \left( \int_{\R_+} f(x)^\top \mu(dx)^\top g(x)   \right),\quad (f,g) \in L^{1}(\mu, \R^{d\times m})\times L^{\infty}(\mu^\top, \R^{d\times m}).
		\end{align*}
	\end{remark}

	We end this subsection with two examples establishing the connection with conventional quadratic models. 
	\begin{example} Fix $\Sigma\in \R^{d \times d}$. For the constant case $k \equiv \Sigma$ we have $\mu({dx}) = \Sigma \delta_0(dx)$, $\supp \mu =\{0\}$ and $Y_t(0)=W_t \in \R^{d\times m}$. For $g_0(t)\equiv0$, $\Lambda\equiv 0$ and \eqref{eq:Theta}  and  \eqref{eq:Gamma}  read
		\begin{align*}
	\dot \Theta_{t,T} &= - m \tr\left(    \Gamma_{t,T}(0,0) \Sigma \Sigma^\top\right),  && \Theta_{T,T}=0 , \\
	\dot 	\Gamma_{t,T}(0,0) &=  w -2 \Gamma_{t,T}(0,0)  \Sigma \Sigma^\top \Gamma_{t,T}(0,0) ,  && \Gamma_{T,T}(0,0)=0. 
		\end{align*}
		These are precisely the conventional backward matrix Riccati equations encountered  for conventional Wishart processes, see \citet[Equation (5.15)]{A15}.  In this case, we recover the well-known Markovian expression for the conditional Laplace transform \eqref{eq:charXXaffine}:
		\begin{align*}
		\E\left[ \exp \left( \int_t^T \tr\left( -w W_sW_s^\top   \right)ds \right) \Mid \Fc_t\right]  = \exp\left( \Theta_{t,T} +\tr\left(  \Gamma_{t,T}(0,0) W_tW_t^\top \right)\right).
		\end{align*}	
	\end{example}

	\begin{example}\label{E:weightexp} Fix $n\geq 1$, $x^n_i \in \R_+$  and $c^n_i \in \R^{d\times d}$, $i=1,\ldots,n$. Consider the kernel
		\begin{align}\label{eq:kn}
		k^n(t)=\sum_{i=1}^n c^n_i e^{-x^n_i t }, \quad t\geq 0,
		\end{align}
		which corresponds to the measure $\mu^n(dx)= \sum_{i=1}^n c^n_i \delta_{x^n_i}(dx) $. 
		The system of Riccati equations   \eqref{eq:Theta}, \eqref{eq:Lambda}  \eqref{eq:Gamma}  is reduced to a system of finite dimensional matrix Riccati equations for
		with values in $\R\times \R^{nd\times m}\times \R^{nd\times nd}$ given by:
			\begin{align}
		\dot \Theta_{t,T}^n &=  \tr\left(g_0(t)^\top w g_0(t)\right) -  m \tr \left(  \Gamma^n_{t,T} C^n  \right)  - 2\tr\left(  \Lambda^{n \top}_{t,T} C^n \Lambda^{n}_{t,T} \right)  ,  &&\!\!\!\!\!\!\!\!\!\Theta^n_{T,T}=0, \label{eq:Thetan} \\
		\dot \Lambda^n_{t,T} &= D^n_t + B^n \Lambda^n_{t,T}  - 2 \Gamma^n_{t,T} C^n \Lambda^n_{t,T},  &&\!\!\!\!\!\!\!\!\!\Lambda^n_{T,T}=0,   \label{eq:Lambdan}\\
		\dot \Gamma^n_{t,T} &= A^n + B^n \Gamma^n_{t,T} + \Gamma^n_{t,T}  B^{n\top} - 2 \Gamma^n_{t,T} C^n \Gamma^n_{t,T},  &&\!\!\!\!\!\!\!\!\!\Gamma^n_{T,T}=0, \label{eq:Gamman}
		\end{align}
		where  for all $r=1,\ldots,m$ $i,j,=1,\ldots,n$ and $k,l=1,\ldots,d$, $p=(i-1)d + k ,q=(j-1)d + l,$
	\begin{align}
	(D^n_t)^{pr}&=(wg_0(t))^{kr},
	&&(\Lambda^n_{t,T})^{p r}= \Lambda_{t,T}(x_i)^{kr},\\
			(C^n)^{pq}&= (c_i^n c_j^{n\top})^{kl},
	&&(\Gamma^n_{t,T})^{pq}=\Gamma_{t,T}(x_i^n,x_j^n)^{kl}, 
	\end{align}
and $A^n$ and $B^n$ are the $nd\times nd$ defined by
	\begin{align}
	A^n=(\mathbbm 1_n \otimes w),  \quad B^n=(\diag(x_1^n,\ldots,x^n_n)   \otimes I_d) 
	\end{align}
	with $\mathbbm 1_n$ the $n\times n$ matrix with all components equal to $1$.   The Riccati equation \eqref{eq:Gamman} can be linearized by doubling the dimension and its solution  is given  explicitly by 
	\begin{align*}
	\Gamma^n_{t,T}= G_2(T-t) G_4(T-t)^{-1}, \quad t\leq T,
	\end{align*}
	where 
	\begin{align}
\left(	\begin{matrix}
	G_1(t) & G_2(t) \\
	 	G_3(t) & G_4(t) \\
	\end{matrix} \right) = \exp\left( t \left(	\begin{matrix}
-	B^n & -A^n \\
-2C^n & B^n \\
	\end{matrix} \right)  \right), \quad t\leq T,
	\end{align}
	see \citet{levin1959matrix}.  Furthermore, we recover the well-known Markovian expression for the conditional Laplace transform \eqref{eq:charXXaffine}:
	\begin{align}\label{eq:Ln}
	\mathcal L_{t,T}^n(w) = \exp\left( \Theta^n_{t,T} + 2\tr\left(  \Lambda^{n\top}_{t,T} \widetilde Y_t^n\right)  +\tr\left(  \Gamma^n_{t,T} \widetilde Y_t^n \widetilde Y_t^{n\top}\right)\right),
	\end{align}	
	where 
	\begin{align}\label{eq:Yn}
	(\widetilde Y^n_t)^{pr}=(c_i^n Y_t(x_i^n))^{kr},
	\end{align}
	$p=(i-1)d + k$, for each $i=1,\ldots,n$, $k=1,\ldots,d$, $r=1,\ldots,m$.
\end{example}

The previous example shows that Volterra Wishart processes can be seen as a superposition of possibly infinitely many conventional linear-quadratic processes in the sense of \citet{chen2004quadratic,cheng2007linear}. This idea is used to build another approximation procedure in the next subsection. 
	
	\subsection{Another approximation procedure}\label{S:approx2}

An application of the Burkholder-Davis-Gundy inequality  yields the following stability result for the sequence 
		\begin{align*}
X^n_t &= g^n_0(t) + \int_0^t k^n(t-s)dW_s, \quad n \geq 1,
\end{align*} 
where $g_0^n: [0,T]\to \R^{d\times m}$ and $k^n \in L^2([0,T],\R^{d\times d})$, for each $n\geq 1$.
	\begin{lemma}\label{L:stability}
		Fix $k \in L^2([0,T],\R^{d\times d})$ and $g_0:[0,T]\to \R^{d\times m}$ measurable  and bounded. If 
		\begin{align}\label{eq:assumptionconvergence}
		\int_0^T |k^n(s) - k(s)|^2ds  \to 0 \quad \mbox{and} \quad 	\sup_{t \leq T} |g_0^n(t)-g_0(t)| \to 0, \quad \mbox{as \, $n \to \infty$},
		\end{align}
		then, 
		\begin{align}
		\sup_{t \leq T} \E\left[ \left|X^n_t- X_t\right|^p \right] \to 0, \quad \mbox{as \, $n \to \infty$}, \quad p \geq 2.
		\end{align}	
	\end{lemma}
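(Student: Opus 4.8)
The plan is to decompose $X^n_t-X_t$ into a deterministic part and a stochastic convolution part and estimate each uniformly in $t\le T$. Writing
\begin{align*}
X^n_t-X_t=\bigl(g_0^n(t)-g_0(t)\bigr)+\int_0^t\bigl(k^n(t-s)-k(t-s)\bigr)dW_s,
\end{align*}
the elementary bound $|a+b|^p\le 2^{p-1}(|a|^p+|b|^p)$ reduces the claim to controlling the two summands. The deterministic one is immediate: $\sup_{t\le T}|g_0^n(t)-g_0(t)|^p\to 0$ by the second hypothesis in \eqref{eq:assumptionconvergence}.

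For the stochastic term I would proceed exactly as in the derivation of \eqref{eq:estimateX}. Fixing $t\le T$, the process $r\mapsto\int_0^r(k^n(t-s)-k(t-s))dW_s$ is a local martingale on $[0,t)$ (this uses $k^n,k\in L^2$), and the Burkholder-Davis-Gundy inequality together with It\^o's isometry produce a constant $c_{p,T}$, depending only on $p$ and $T$ and in particular neither on $n$ nor on $t$, with
\begin{align*}
\E\left[\left|\int_0^t\bigl(k^n(t-s)-k(t-s)\bigr)dW_s\right|^p\right]\le c_{p,T}\left(\int_0^t|k^n(t-s)-k(t-s)|^2ds\right)^{p/2}.
\end{align*}
The change of variables $s\mapsto t-s$ gives $\int_0^t|k^n(t-s)-k(t-s)|^2ds=\int_0^t|k^n(s)-k(s)|^2ds$, which by nonnegativity of the integrand is at most $\int_0^T|k^n(s)-k(s)|^2ds$, a deterministic quantity independent of $t$ that vanishes as $n\to\infty$ by the first hypothesis in \eqref{eq:assumptionconvergence}.

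Putting the pieces together yields $\sup_{t\le T}\E[|X^n_t-X_t|^p]\le 2^{p-1}\bigl(\sup_{t\le T}|g_0^n(t)-g_0(t)|^p+c_{p,T}(\int_0^T|k^n(s)-k(s)|^2ds)^{p/2}\bigr)\to 0$, which is the assertion. The only step that is not purely mechanical is ensuring the uniformity of the BDG constant in $n$ and $t$ and justifying the passage from the local martingale on $[0,t)$ to its terminal value; both are handled precisely as in \eqref{eq:estimateX}, so I do not anticipate a genuine obstacle — the result is essentially a quantitative restatement of the well-posedness estimates already established for the stochastic convolution.
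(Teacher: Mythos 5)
Your proof is correct and follows exactly the route the paper intends: the paper gives no written proof of this lemma beyond the remark that it follows from an application of the Burkholder--Davis--Gundy inequality, and your argument is precisely the fleshed-out version of that one-liner, decomposing $X^n_t-X_t$ into the drift difference and the stochastic convolution of $k^n-k$ and reusing the estimate \eqref{eq:estimateX} with a change of variables to get a bound uniform in $t$ and vanishing in $n$.
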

	
	Combined with Example~\ref{E:weightexp}, we obtain another approximation scheme  for the Laplace transform based on finite-dimensional matrix Riccati equations (compare with Remark \ref{R:approx1}).
	\begin{proposition}
		Fix $w\in \S^d_+$ and $t\leq T$. For each $n$, let $k^n$ be as in \eqref{eq:kn} for some  $x^n_i \in \R_+$  and $c^n_i \in \R^{d\times d}$. 
		Assume that \eqref{eq:assumptionconvergence} holds. Then, 
		\begin{align}
		\mathcal L_{t,T}(w)= \lim_{n \to \infty}\exp\left( \Theta^n_{t,T} + 2 \tr\left(  \Lambda^{n\top}_{t,T} \widetilde Y_t^n\right)  +\tr\left(  \Gamma^n_{t,T} \widetilde Y_t^n \widetilde Y_t^{n\top}\right)\right)
		\end{align}
		where $(\Theta^n,\Lambda^n,\Gamma^n)$ solve \eqref{eq:Thetan}, \eqref{eq:Lambdan} and \eqref{eq:Gamman} and $\widetilde Y^n$ is given by \eqref{eq:Yn}.
	\end{proposition}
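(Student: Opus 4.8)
The plan is to combine the stability result of Lemma~\ref{L:stability} with the exact formula from Example~\ref{E:weightexp}, passing to the limit in the Laplace transform. First I would note that by Example~\ref{E:weightexp}, for each fixed $n$ the kernel $k^n$ of the form \eqref{eq:kn} is covered by Theorem~\ref{T:charXX} (it satisfies \eqref{eq:assumptionK} and \eqref{eq:assumptionkerneldiff}, being a finite sum of exponentials), and the conditional Laplace transform of $\int_t^T\tr(wX^n_sX^{n\top}_s)ds$ is exactly
\begin{align*}
\mathcal L^n_{t,T}(w)=\exp\left(\Theta^n_{t,T}+2\tr\left(\Lambda^{n\top}_{t,T}\widetilde Y^n_t\right)+\tr\left(\Gamma^n_{t,T}\widetilde Y^n_t\widetilde Y^{n\top}_t\right)\right),
\end{align*}
where $(\Theta^n,\Lambda^n,\Gamma^n)$ solve \eqref{eq:Thetan}--\eqref{eq:Gamman} and $\widetilde Y^n$ is given by \eqref{eq:Yn}. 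So the whole content reduces to showing $\mathcal L^n_{t,T}(w)\to\mathcal L_{t,T}(w)$.

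Next I would establish the convergence of $\int_t^T\tr(wX^n_sX^{n\top}_s)ds$ to $\int_t^T\tr(wX_sX^{\top}_s)ds$. Under \eqref{eq:assumptionconvergence}, Lemma~\ref{L:stability} gives $\sup_{s\le T}\E[|X^n_s-X_s|^p]\to0$ for every $p\ge2$; combined with the uniform bound \eqref{eq:estimateX} applied to both $X^n$ and $X$ (the $L^2$-norms of the $k^n$ are uniformly bounded by \eqref{eq:assumptionconvergence}), one gets, using $\tr(wAA^\top)-\tr(wBB^\top)=\tr\big(w(A-B)A^\top\big)+\tr\big(wB(A-B)^\top\big)$, Cauchy--Schwarz in $(\omega,s)$, and Fubini,
\begin{align*}
\E\left[\left|\int_t^T\tr(wX^n_sX^{n\top}_s)ds-\int_t^T\tr(wX_sX^{\top}_s)ds\right|\right]\le C\,\|w\|\int_t^T\E[|X^n_s-X_s|^2]^{1/2}\left(\E[|X^n_s|^2]^{1/2}+\E[|X_s|^2]^{1/2}\right)ds\to0.
\end{align*}
Hence $\int_t^T\tr(wX^n_sX^{n\top}_s)ds\to\int_t^T\tr(wX_sX^{\top}_s)ds$ in $L^1$, and along a subsequence almost surely; since the random variables $\exp(-\int_t^T\tr(wX^n_sX^{n\top}_s)ds)$ are all bounded by $1$ (as $w\in\S^d_+$), the conditional dominated convergence theorem yields $\E[e^{-\int_t^T\tr(wX^n_sX^{n\top}_s)ds}\mid\Fc_t]\to\mathcal L_{t,T}(w)$ in $L^1$, and again a.s.\ along a subsequence. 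To upgrade from subsequential to full convergence, I would run the same argument starting from an arbitrary subsequence: every subsequence of $(\mathcal L^n_{t,T}(w))_n$ has a further subsequence converging to $\mathcal L_{t,T}(w)$, so the whole sequence converges. Putting this together with the identity $\mathcal L^n_{t,T}(w)=\exp(\Theta^n_{t,T}+2\tr(\Lambda^{n\top}_{t,T}\widetilde Y^n_t)+\tr(\Gamma^n_{t,T}\widetilde Y^n_t\widetilde Y^{n\top}_t))$ from Example~\ref{E:weightexp} gives the claimed limit.

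The main obstacle is the probabilistic limiting step rather than the algebraic bookkeeping: one must be careful that the convergence in Lemma~\ref{L:stability} is in $L^p$ for the \emph{marginals} $X^n_t$ uniformly in $t$, not in any pathwise or functional sense, so the passage to $L^1$-convergence of the time integral $\int_t^T\tr(wX^n_sX^{n\top}_s)ds$ needs Fubini together with the uniform second-moment bounds, and the passage to the conditional expectation needs the uniform boundedness of the exponentials (which is where $w\in\S^d_+$ is used). Everything after that — identifying the limit of $\mathcal L^n_{t,T}$ with the right-hand side via Example~\ref{E:weightexp}, and the subsequence-of-subsequence trick to get full convergence — is routine.
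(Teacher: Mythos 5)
Your proposal is correct and follows essentially the same route as the paper's proof: write the difference of the quadratic forms as a product of a sum and a difference, control it via Cauchy--Schwarz together with the uniform second-moment bounds and Lemma~\ref{L:stability}, pass to a.s.\ convergence along a subsequence, and conclude by dominated convergence (the exponentials being bounded by $1$ since $w\in\S^d_+$) combined with the identity \eqref{eq:Ln}. Your additional care about the subsequence-of-subsequences upgrade and the $L^1$-contractivity of conditional expectation only makes explicit what the paper leaves implicit.
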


\begin{proof}
	Fix $t\leq s\leq T$. Writing $X^{n\top}_s w X^n_s -X_s^\top w X_s = (X^{n}_s +X_s)^\top w (X^n_s -X_s)$, we get by the Cauchy-Schwarz inequality that 
	$$  \E\left[ \left| \int_t^T (X^{n\top}_s w X^n_s - X_s^\top w X_s) ds \right|^2 \right] \leq  c  \sup_{s\leq T} \left(       \E\left[ |X_s|^2  \right] +\E\left[ |X^n_s|^2  \right]    \right)  \sup_{s\leq T}\E\left[ |X^n_s-X_s|^2  \right],  $$
	for some constant $c$ independent of $n$. It follows from Lemma~\ref{L:stability} that 
	$(\sup_{s\leq T}\E\left[ |X^n_s|^2  \right])_{n\geq 1 }$ is uniformly bounded in $n$,  so that the right hand side converges to $0$ as $n\to \infty$. Whence, $\int_t^T X^{n\top}_s w X^n_s ds\to \int_t^T X^{\top}_s w X_s ds$ a.s.~along a subsequence and the claimed convergence follows from the  dominated convergence theorem combined with \eqref{eq:Ln}. 
\end{proof}
	
	For  $d=m=1$ and $k$ of the form \eqref{eq:Klaplace} for some measure $\mu$, for  suitable partitions $(\eta^n_i)_{0\leq i \leq n}$ of $\R_+$, the choice 
	\begin{align*}
	c^n_i =  \int_{\eta_{i-1}^n}^{\eta_{i}^n} \mu(dx) \quad \mbox{and} \quad x^n_i = \frac 1 {c^n_i}\int_{\eta_{i-1}^n}^{\eta_{i}^n} x \mu(dx), \quad i=1,\ldots, n,
	\end{align*}
	ensures the  $L^2$-convergence of the kernels $k^n$ in \eqref{eq:kn}, we refer to \citet{aj2018lifting,aj2018multi} for such constructions,    see also \citet{harms2019strong} for other choices of quadratures and for a detailed study of strong convergence rates.

\section{Applications}\label{S:applications}

\subsection{Bond pricing in quadratic Volterra short rate models with default risk}
We consider a quadratic short rate model of the form
$$r_t = \tr \left( X_t^\top Q X_t \right) + \xi(t), \quad t\leq T,$$
 where $X$ is the $d\times m$ Volterra process as in \eqref{eq:volterragaussian},  $Q\in \S^d_+$ and 
$\xi:[0,T]\to \R$ is an input curve used to match today's yield curve and/or  control the negativity level of the short rate. The model replicates the asymmetrical distribution of
interest rates, allows for rich auto-correlation structures, and the possibility to account for long range dependence,  see for instance \citet{benth2018non,corcuera2013short}.

An application of Theorem~\ref{T:charXX} yields the price $P(\cdot,T)$   of a zero-coupon bond  with maturity $T$:
\begin{align*}
P(t,T)&=\E\left[ \exp\left( -\int_t^T r_sds   \right)\Mid \Fc_t\right] = \exp\left( -\int_t^T \xi(s) ds \right) \mathcal L_{t,T}(Q), \quad t\leq T,
\end{align*}
where $\mathcal L$  is given by \eqref{eq:L_tT}. {In this case, the zero-coupon yield with time to maturity $\tau=T-t$ is quadratic in $g$ and  reads
\begin{align} 
 y_t(\tau)&= - \frac{1}{T-t} \log P(t,T) \\
&= \frac{1}{\tau} \int_t^{t+\tau} \xi(s) ds +    \frac{1}{\tau} \langle g_t,  \bold{\Psi}_{t,{t+\tau}}   g_t  \rangle_{L^2_t} + \frac {m}{2\tau} \log  \det\left( \id + 2\sqrt Q \bold{C}_{t,t+\tau} \sqrt w  \right), 
\end{align}
with $\bold{\Psi}_{t,t+\tau}=\sqrt Q \left( \id + 2\sqrt Q \bold{C}_{t,t+\tau} \sqrt Q  \right)^{-1}\sqrt Q$ and $\langle f,g \rangle_{L_t}=\int_t^{t+\tau} \tr(f(s)^\top g(s))ds$. The role of the input curve $\xi$ becomes apparent: it allows to perfectly match any given yield curve and/or possibly push the yields into negative territory (observe that $\bold{\Psi}_{t,t+\tau}$ is a non-negative operator). Furthermore, various shapes of the deformation  of the yield curve can be replicated. For instance,  the left graph of Figure~\ref{fig:sensi} shows that in the one dimensional setting, when $X=W^H$ with $W^H$ a fractional Brownian motion with Hurst index $H\in (0,1)$, the variation of the Hurst index $H$   can produce inverse, hump-shaped and normal yield curves. The combination of two independent fractional Brownian motion with different Hurst indices lead to richer deformations as displayed on the right graph of Figure~\ref{fig:sensi}.

\begin{center}
	\includegraphics[scale=0.62]{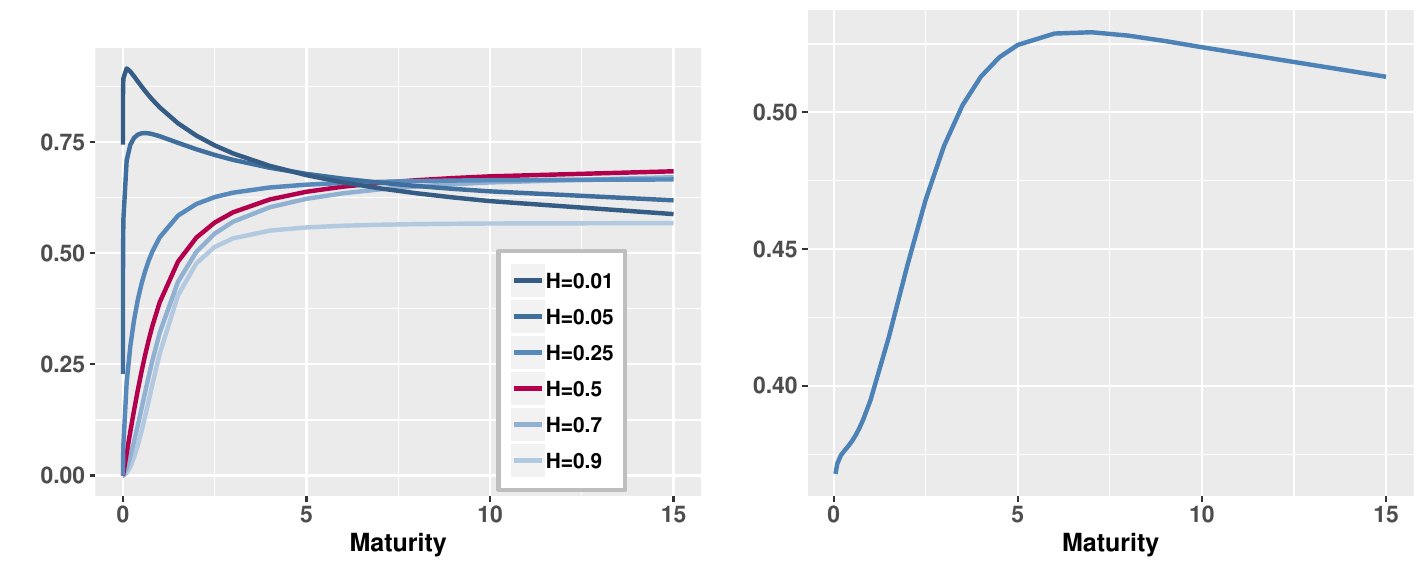}
	\rule{35em}{0.5pt}
	\captionof{figure}{{Left: Sensitivity of the yield curve  $T \mapsto y_0(T)$ with respect to the Hurst index for $d=m=1$, $X=W^{H} $ and $\xi\equiv 0$. Right: yield curve  $T \mapsto y_0(T)$  for $d=2$, $m=1$, $X=(W^{H_1},W^{H_2})^\top$, $(H_1,H_2)=(0.05,0.9)$,  $(Q_{11}, Q_{12},Q_{22})=(0.4 , 0.05, 0.1)$  and $\xi\equiv 0$}}
	\label{fig:sensi}
\end{center}

From the dynamical perspective, for two maturities $\tau_1,\tau_2$, the instantaneous covariance of the variation of the  yields  over time is given by 
\begin{align}
d\langle y_{\cdot}(\tau_1), y_{\cdot}(\tau_2)  \rangle_t &= \frac{4}{\tau_1\tau_2} \tr\left( (\boldsymbol{K}^*\boldsymbol{\Psi}_{t,t+\tau_1})(\bm 1_t g_t)(t) \left( 	 (\boldsymbol{K}^*\boldsymbol{\Psi}_{t,t+\tau_2})(\bm 1_t g_t)(t)\right)^\top \right) dt,
\end{align}
which is stochastic, non-trivial and allow  sign changing across time. 
For instance, for the standard case $K(t,s)= e^{-B(t-s)} \eta$ and $g_0(t)\equiv X_0$, with $B,\eta, X_0 \in \R^{d\times d}$, we have that $g_t(s)=X_t$ for all $s \geq t $, one recovers the expression  of the instantaneous covariance in a Wishart short rate model (see \citet{buraschi2010correlation}):
\begin{align}
d\langle y_{\cdot}(\tau_1), y_{\cdot}(\tau_2)  \rangle_t &= \frac{4}{\tau_1\tau_2} \tr\left(  \bar \Psi_{t,\tau_1}X_tX_t^\top \bar \Psi_{t,\tau_2}^\top  \right) dt
\end{align}
with $ \bar \Psi_{t,\tau_i}= \int_{(t,t+\tau_i]^2}   e^{-B(s-t)} \eta \Psi_{t,t+\tau_i}(ds, du)$.

Compared to the standard case, more general kernels allow to capture both  ``time series" and ``cross section" features of interest rates even with one single factor, see for instance \citet{backus1993long,dai2003term,ritchken2000interest}:
\begin{itemize}
	\item 
	 Figure \ref{fig:shortsimul} highlights the  auto-correlation structure of short rates: for $H=0.9$ the rates are highly persistent as observed in practice;
	 \item
	  Figure \ref{fig:varianceyields}  shows the term structure of the variance of  the yields: the case $H=0.9$ allows to reproduce a humped term structure  decaying at a slower rate than exponential (i.e. for the standard case $H=0.5$) in agreement with the empirical observations. 
\end{itemize} 
The impact is amplified in a multifactor setting where all the factors share the same Hurst index, as shown by the principal component analysis on Figure~\ref{fig:multifactorpca}.  It might be interesting to consider a mixture of several factors with different Hurst indices to better capture the behavior of yields across several maturities.

\begin{center}
	\includegraphics[scale=0.62]{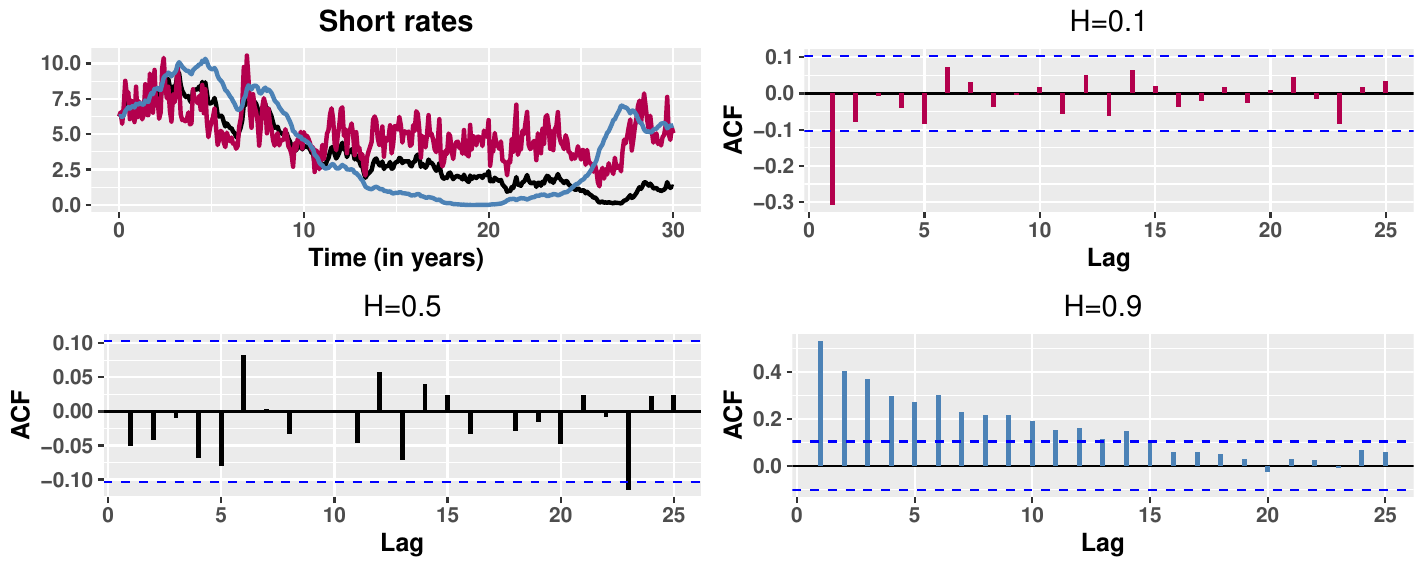}
	\rule{35em}{0.5pt}
	\captionof{figure}{{Simulation of monthly short rates (top left)  with $X_t=2.5+\frac{1}{\Gamma(H+1/2)}\int_0^t (t-s)^{H-1/2}dW_s$ and varying $H$ index: $H=0.1$ (red), $H=0.5$ (black) and $H=0.9$ (blue) with the corresponding autocorrelation plots.}}
	\label{fig:shortsimul}
\end{center}

\begin{center}
	\includegraphics[scale=0.62]{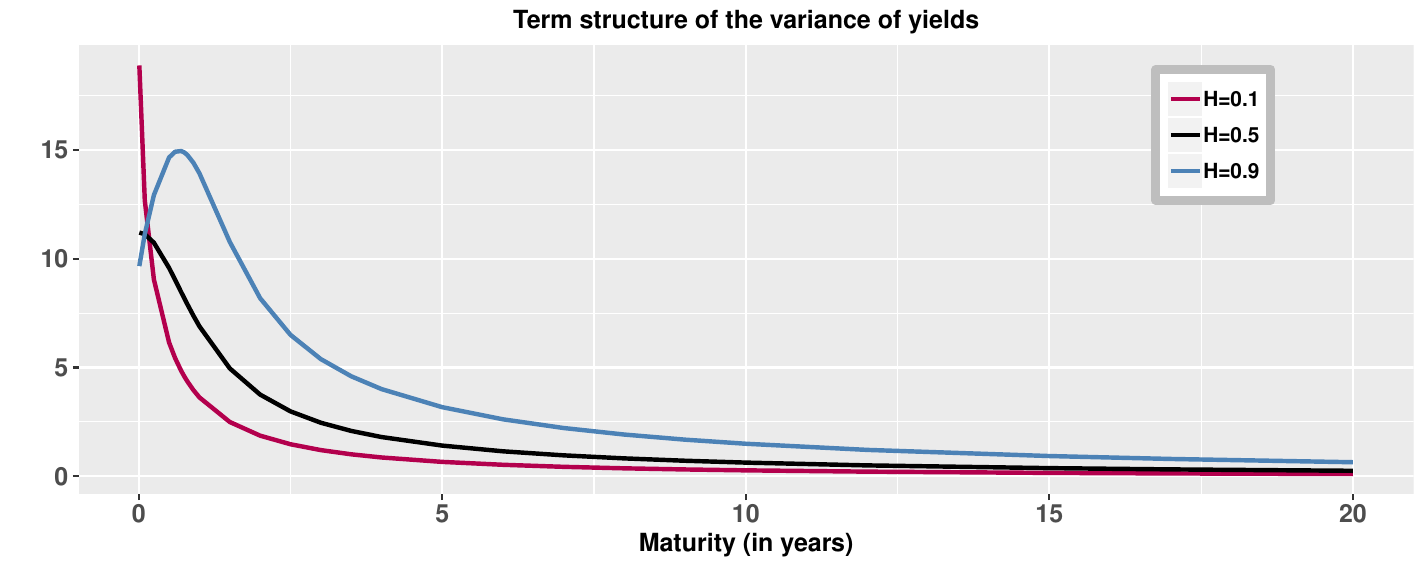}
	\rule{35em}{0.5pt}
	\captionof{figure}{{Impact of the Hurst index on the term structure of the variance of yields with $X_t=2.5+\frac{1}{\Gamma(H+1/2)}\int_0^t (t-s)^{H-1/2}dW_s$.}}
	\label{fig:varianceyields}
\end{center}

\begin{center}
	\includegraphics[scale=0.62]{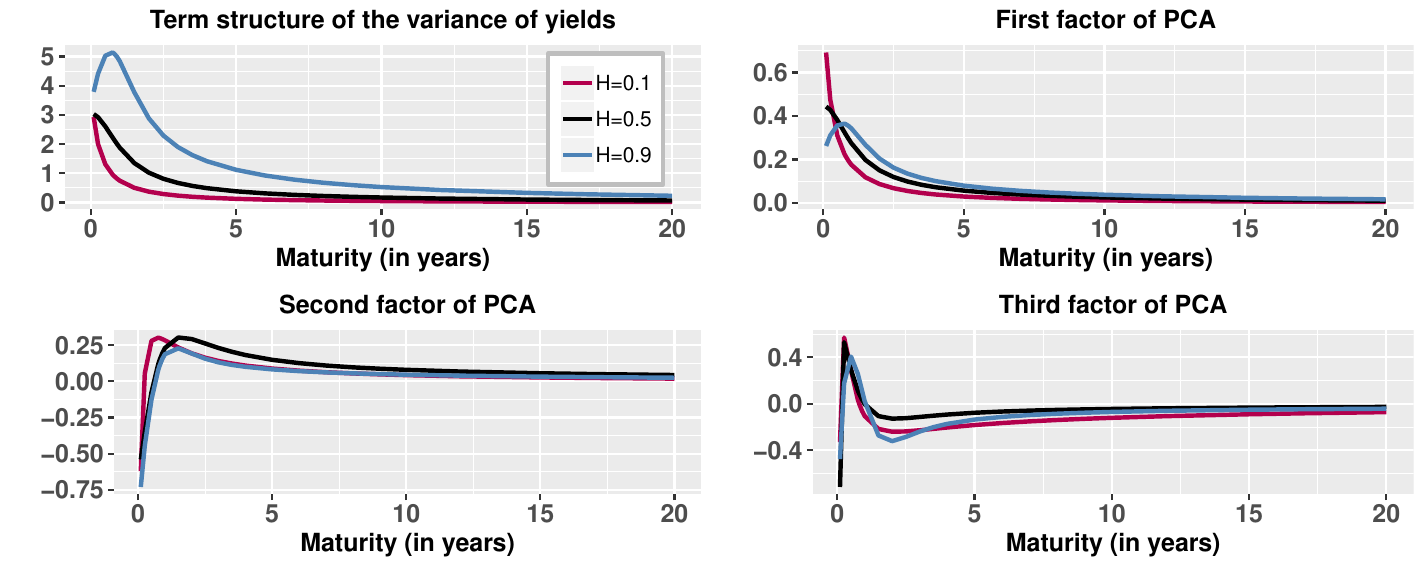}
	\rule{35em}{0.5pt}
	\captionof{figure}{{Impact of the Hurst index on the principal component analysis of the covariance of the yields with $d=3$, $m=1$, $Q_{ii}=1$ and $Q_{12}=Q_{23}=0.5$ and  $X^i_t=0.33+\frac{1}{\Gamma(H+1/2)}\int_0^t (t-s)^{H-1/2}dW^i_s$, $i=1,\ldots,3$. }}
	\label{fig:multifactorpca}
\end{center}

}

One can also add multiple spreads by considering stochastic   processes of the form  
$$\lambda_t =   \tr \left( X_t^\top \widetilde Q X_t \right) + \widetilde \xi(t), \quad t \leq T, $$
for some  $\widetilde Q\in \S^d_+$ and $\widetilde \xi:[0,T]\to \R_+$ bounded function. By definition the spread is nonnegative,  correlated to  the short rate with a possible long range dependence or roughness. The introduction of $\lambda$ can serve in two ways. Either in a multiple curve modeling framework, to add a  risky curve on top of the non-risk one with instantaneous rate $r+\lambda$ or to model default time. In the latter case, $\lambda$ would correspond to the instantaneous intensity of  a Poisson process $N$ such that the default time $\tau$ is defined as the first jump time of $N$. In both cases, we denote  by  $\widetilde P(\cdot,T)$ the price of the risky curve or the price of a  defaultable bond  paying $\bold{1}_{\tau\leq T}$ at maturity $T$. Then, on $\{t<\tau\}$, the price is given by 
\begin{align*}
\widetilde P(t,T)&=\E\left[ \exp\left( -\int_t^T (r_s +\lambda_s) ds   \right)\Mid \Fc_t\right] \\
&= \exp\left( -\int_t^T (\xi(s) +\widetilde \xi(s)) ds \right) \mathcal L_{t,T}(Q+\widetilde Q),
\end{align*}
for all $t\leq T$, we refer to \citet{lando1998cox} for more details on the derivation of the defaultable bond price.

% In practice, Proposition~\ref{R:approx1} can be used to approximate $\mathcal L_{t,T}$, and consequently $P$ and $\widetilde P$, see also Remark~\ref{R:approx1}. For kernels of the form \eqref{eq:Klaplace} the approximation results of Section~\ref{S:approx2} also apply.

\subsection{Pricing options on volatility/variance for basket products in Volterra Wishart covariance models}

We consider $d\geq 1$ risky assets $S=(S^1,\ldots, S^d)$ such that the instantaneous realized covariance is given by  
\begin{align}\label{eq:instantenuouscov}
d\langle  \log  S \rangle_t = X_t X_t^\top dt
\end{align}
where $X$ is the $d\times m$ process as in \eqref{eq:volterragaussian}. The following specifications for the dynamics of $S$ fall into this framework.

\begin{example}
	\begin{enumerate}
\item 
The Volterra Wishart covariance model for $d=m$:
\begin{align*}
dS_t &= \diag(S_t)  X_t dB_t , \quad S_0 \in \R^d_+\\
X_t &= g_0(t) + \int_0^t K(t,s)dW_s,
\end{align*}
with $g_0:[0,T]\to \R^{d \times d}$, a suitable measurable kernel  $K: [0,T]^2 \to \R^{d \times d}$,  
a $d\times d$ Brownian motion $W$ and 
\begin{align}\label{eq:correlatedBM}
B^j = \tr\left( W_j \rho_j^\top\right) + \sqrt{1-\tr\left(\rho_j\rho_j^\top\right)}\, W^{\perp,j},\quad j=1,\ldots,d,
\end{align}
for some  $\rho_j\in \R^{d \times m}$ such that $\tr\left(\rho_j\rho_j^\top\right)\leq 1$, for $j=1,\ldots,n$,
where  $W^\perp$ is a  $d$--dimensional Brownian motion  independent of $W$. 
		\item
The Volterra	Stein-Stein model when $d=m=1$:
	\begin{align*}
	dS_t &= S_t  X_t dB_t , \quad S_0>0,\\
	X_t &= g_0(t) + \int_0^t K(t,s)dW_s, \quad d\langle B,W\rangle_t = \rho dt,
	\end{align*}
	for some $\rho \in [-1,1]$.
	\end{enumerate}
\end{example}

The approach  of \citet{carr2008robust}, based on \citet{schurger2002laplace}, can be adapted to price various volatility and variance options on basket products. Indeed, consider a basket product of the form 
$$ P^{\alpha}_t =  \sum_{j=1}^d \alpha_j \log S^j_t = \alpha^\top \log S_t,  \quad t\leq T, $$
for some $\alpha=(\alpha_1,\ldots,\alpha_d)^\top \in \R^d$. It follows from \eqref{eq:instantenuouscov} that 
the integrated realized variance $\Sigma^{\alpha}$ of $P^{\alpha}$ is given by 
\begin{align*}
  \Sigma^{\alpha}_t=\int_0^t \alpha^\top X_s X_s^\top \alpha ds  =  \int_0^t \tr\left(\alpha \alpha^\top  X_s X_s^\top \right)ds,\quad t\leq T.
\end{align*}
Fix $q \in (0,1]$ and consider the $q$-th power variance swap whose payoff at maturity $T$ is given by
\begin{align*}
 (\Sigma^{\alpha}_T)^q - F=  \left(\int_0^T \tr\left(\alpha \alpha^\top  X_s X_s^\top \right)ds\right)^q - F,
\end{align*}
for some strike $F\geq 0$.  In particular, for $q=1/2$ one recovers a volatility swap and for $q=1$ a variance swap.  The value of the contract being null at $t=0$, the fair strike $F^*_q$ reads
$$F_q^* = \E\left[  \left(\int_0^T \tr\left(\alpha \alpha^\top  X_s X_s^\top \right)ds \right)^q   \right].$$

The following proposition establishes the  expression of the fair strike in terms of the Laplace transform provided by Theorem~\ref{T:charXX}. 
\begin{proposition}
	Assume that $0<q<1$, then the fair strike of the $q$-th power variance swap is given by 
	$$ F_q^* =  \frac q {\Gamma(1-q)} \int_0^{\infty} \frac{1-\mathcal L_{0,T}\left(z\alpha\alpha^\top\right)}{z^{q+1}} dz, $$
	where $\mathcal L$ is given by \eqref{eq:L_tT}. 
	If $q=1$, the fair strike for the variance swap reads
	$$ F_1^* =  \int_0^T \tr\left(\alpha \alpha^\top g_0(s)g_0(s)^\top ds \right)ds +  \int_0^T \int_0^s  \tr\left(\alpha \alpha^\top K(s,u)K(s,u)^\top \right) du ds. $$
\end{proposition}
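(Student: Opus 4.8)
The plan is to reduce the case $0<q<1$ to a single deterministic identity and then merely take expectations. The identity I would use is the Frullani/subordination formula
\begin{align*}
x^q \;=\; \frac{q}{\Gamma(1-q)} \int_0^\infty \frac{1-e^{-zx}}{z^{q+1}}\,dz, \qquad x\ge 0,\quad 0<q<1,
\end{align*}
which I would prove by the change of variables $u=zx$ for $x>0$, reducing the right-hand side to $x^q\int_0^\infty u^{-q-1}(1-e^{-u})\,du$, followed by one integration by parts (with $f(u)=1-e^{-u}$ and $dg=u^{-q-1}\,du$): the boundary terms vanish precisely because $q\in(0,1)$ --- at $u=0$ using $1-e^{-u}\sim u$, at $u=\infty$ using $q>0$ --- leaving $q^{-1}\int_0^\infty u^{-q}e^{-u}\,du=q^{-1}\Gamma(1-q)$; the case $x=0$ is trivial.

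Next I would apply this identity pathwise with $x=\Sigma^\alpha_T=\int_0^T\tr(\alpha\alpha^\top X_sX_s^\top)\,ds\ge 0$ and take expectations. Since $z\mapsto (1-e^{-z\Sigma^\alpha_T})/z^{q+1}$ is nonnegative, Tonelli's theorem permits the interchange of $\E$ and $\int_0^\infty dz$ with no extra work, and
\begin{align*}
F_q^* \;=\; \E\big[(\Sigma^\alpha_T)^q\big] \;=\; \frac{q}{\Gamma(1-q)} \int_0^\infty \frac{1-\E[e^{-z\Sigma^\alpha_T}]}{z^{q+1}}\,dz.
\end{align*}
It remains to recognise $\E[e^{-z\Sigma^\alpha_T}]$ as $\mathcal L_{0,T}(z\alpha\alpha^\top)$: for $z\ge 0$ one has $z\alpha\alpha^\top\in\S^d_+$ and $\int_0^T\tr\big((z\alpha\alpha^\top)X_sX_s^\top\big)\,ds = z\,\Sigma^\alpha_T$, so this is exactly the definition \eqref{eq:L_tT} of $\mathcal L_{0,T}$ at $t=0$, $w=z\alpha\alpha^\top$. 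Finiteness of $F_q^*$ follows from $\E[\Sigma^\alpha_T]=\int_0^T\E[|X_s^\top\alpha|^2]\,ds<\infty$, a consequence of the moment bound \eqref{eq:estimateX} and of $g_0$ being bounded, together with concavity of $x\mapsto x^q$ and Jensen's inequality; the same bound shows the final integral converges, its integrand behaving like $z^{-q}$ near $0$ (since $1-\mathcal L_{0,T}(z\alpha\alpha^\top)\sim z\,\E[\Sigma^\alpha_T]$) and like $z^{-q-1}$ near $\infty$ (since $0\le \mathcal L_{0,T}\le 1$).

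For $q=1$ the argument is a direct second-moment computation. By Fubini's theorem, again justified by \eqref{eq:estimateX}, $F_1^*=\int_0^T\tr\big(\alpha\alpha^\top\,\E[X_sX_s^\top]\big)\,ds$. Writing $X_s=g_0(s)+\int_0^s K(s,u)\,dW_u$, the stochastic integral is centred, so the cross terms with the deterministic $g_0(s)$ drop and It\^o's isometry (equivalently, the covariance structure recorded in \eqref{eq:meanvariance} at $t=0$) gives $\E[X_sX_s^\top]=g_0(s)g_0(s)^\top+\int_0^s K(s,u)K(s,u)^\top\,du$; substituting and interchanging the two finite integrals yields the stated formula.

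The computations are otherwise routine; the only points that need care are the deterministic identity in the first step and the legitimacy of the interchanges of integration, both dispatched --- as above --- by nonnegativity (Tonelli) when $q<1$ and by the uniform second-moment bound \eqref{eq:estimateX} (Fubini) when $q=1$.
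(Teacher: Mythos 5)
Your proof is correct and follows essentially the same route as the paper: for $0<q<1$, the pathwise identity $v^q=\frac{q}{\Gamma(1-q)}\int_0^\infty z^{-q-1}(1-e^{-zv})\,dz$ followed by Tonelli and the identification $\E[e^{-z\Sigma^\alpha_T}]=\mathcal L_{0,T}(z\alpha\alpha^\top)$ (the paper merely cites this identity from Sch\"urger, whereas you derive it and also verify integrability at $0$ and $\infty$, which is a welcome addition); for $q=1$, a direct first-moment computation, which is equivalent to the paper's suggestion of reading off $\E[X_sX_s^\top]$ from the dynamics in Remark~\ref{R:dynamic}. One caveat on the $q=1$ step: for a $d\times m$ matrix Brownian motion with $m>1$, It\^o's isometry gives $\E[(X_s-g_0(s))(X_s-g_0(s))^\top]=m\int_0^s K(s,u)K(s,u)^\top du$ rather than $\int_0^s K(s,u)K(s,u)^\top du$, since the covariance of $\vecop(X_s)$ is $I_m\otimes C_0(s,s)$ and the outer product $X_sX_s^\top$ sums over the $m$ columns; hence the second term of $F_1^*$ should carry a factor $m$ (consistent with the $m$ in \eqref{eq:phiwishart} and the exponent $m/2$ in \eqref{eq:charXX}), and your formula, like the proposition as printed, is exact only for $m=1$.
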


\begin{proof}
	For $0<q<1$, applying the identity 
	\begin{align} 
	v^{q}=  \frac q {\Gamma(1-q)} \int_0^{\infty} \frac{1-e^{-zv}}{z^{q+1}} dz, \quad 0<q<1, \quad v\geq 0,
	\end{align}
 see \citet{schurger2002laplace}, to  $v=\int_0^T \tr\left(\alpha \alpha^\top  X_s X_s^\top \right)ds $, taking expectation and invoking   Tonelli's theorem together with Theorem~\ref{T:charXX}  yield the claimed identity. For $q=1$, one could proceed by differentiating the Laplace transform or more simply by using the dynamics of $XX^\top$ as in Remark~\ref{R:dynamic}.
\end{proof}

Similarly, one can obtain the following formulas for negative powers
\begin{align*}
 \E\left[  \left(\int_0^T \tr\left(\alpha \alpha^\top  X_s X_s^\top \right  )ds  +\epsilon\right)^{-q}   \right] =  \frac 1 {\Gamma(1+q)} \int_0^{\infty}  \mathcal L_{0,T}(z^{1/q}\alpha \alpha^\top){e^{-z^{1/q}\epsilon}} dz, \quad \epsilon,q>0,
\end{align*}
using the integral representation, taken from \citet{schurger2002laplace},
\begin{align} 
v^{-q}=  \frac 1 {q\Gamma(1+q)} \int_0^{\infty} {e^{-z^{1/q}v}} dz, \quad q,v>0.
\end{align}

Again, the approximation formulas of  Remark~\ref{R:approx1} and  Section~\ref{S:approx2} can be applied to compute $\mathcal L_{0,T}$.

\appendix

\section{Wishart distribution}\label{A:wishart}

\begin{proposition}\label{P:Wishartchar}
	Let $\xi$ be an $\R^N$ Gaussian vector with mean vector $\mu\in \R^N$ and covariance matrix $ \Sigma \in \S^{N}_+$, then $\xi\xi^\top$ follows a non-central Wishart distributions with shape parameter $1/2$, scale parameter $2\Sigma$ and non-centrality parameter $\mu \mu^\top$, written as 
	$$\xi \xi^\top \sim \mbox{WIS}_N \left(\frac 1 2 , \mu \mu^\top , 2 \Sigma\right).$$
	Furthermore, 
	\begin{align*}
	\E\left[\exp\left(- \tr\left(  u \xi \xi^\top \right ) \right)\right] = \frac {\exp\left(-\tr\left( u(I_N + 2\Sigma u)^{-1} \mu \mu^\top \right)\right)}{\det\left(I_N + 2 \Sigma u\right)^{1/2}}, \quad u \in \mathbb S^N_+.
	\end{align*}
\end{proposition}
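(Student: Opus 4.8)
The plan is to prove the Laplace transform formula by a direct Gaussian computation and then read off the distributional claim from it. Since $\tr(u\xi\xi^\top)=\xi^\top u\xi$, the quantity to evaluate is $\E[\exp(-\xi^\top u\xi)]$ for $\xi\sim\mathcal N(\mu,\Sigma)$ and $u\in\S^N_+$.

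First I would treat the case of an invertible $\Sigma\in\S^N_+$. Writing out the Gaussian density and collecting terms, the integrand equals, up to the normalising constant $(2\pi)^{-N/2}\det(\Sigma)^{-1/2}$, the expression $\exp\!\big(-\tfrac12[x^\top(\Sigma^{-1}+2u)x-2\mu^\top\Sigma^{-1}x+\mu^\top\Sigma^{-1}\mu]\big)$. Completing the square in $x$ around $(\Sigma^{-1}+2u)^{-1}\Sigma^{-1}\mu$ and carrying out the resulting Gaussian integral produces $\det\!\big(\Sigma(\Sigma^{-1}+2u)\big)^{-1/2}\exp\!\big(-\tfrac12\,\mu^\top\big(\Sigma^{-1}-\Sigma^{-1}(\Sigma^{-1}+2u)^{-1}\Sigma^{-1}\big)\mu\big)$. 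Here $\Sigma(\Sigma^{-1}+2u)=I_N+2\Sigma u$, so the determinant is $\det(I_N+2\Sigma u)^{-1/2}$, and the matrix in the exponent simplifies by a push-through computation, $\Sigma^{-1}-\Sigma^{-1}(\Sigma^{-1}+2u)^{-1}\Sigma^{-1}=\big(I_N-\Sigma^{-1}(\Sigma^{-1}+2u)^{-1}\big)\Sigma^{-1}=2u(\Sigma^{-1}+2u)^{-1}\Sigma^{-1}=2u(I_N+2\Sigma u)^{-1}$, giving the exponent $-\mu^\top u(I_N+2\Sigma u)^{-1}\mu=-\tr\!\big(u(I_N+2\Sigma u)^{-1}\mu\mu^\top\big)$, which is exactly the claimed form. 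An equivalent route that never inverts $\Sigma$, and which I would actually prefer to present, is to write $\xi=\mu+\Sigma^{1/2}\eta$ with $\eta$ standard Gaussian, diagonalise $B:=\Sigma^{1/2}u\Sigma^{1/2}=ODO^\top$, and apply coordinatewise the elementary identity $\E[e^{-d\zeta^2-2c\zeta}]=(1+2d)^{-1/2}\exp\!\big(2c^2/(1+2d)\big)$ for a scalar standard Gaussian $\zeta$; Sylvester's identity $\det(I_N+2B)=\det(I_N+2\Sigma u)$ and the push-through identity $\Sigma^{1/2}(I_N+2\Sigma^{1/2}u\Sigma^{1/2})^{-1}\Sigma^{1/2}=(\Sigma^{-1}+2u)^{-1}$ then reassemble the same formula.

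For a general $\Sigma\in\S^N_+$ I would argue by approximation: replace $\Sigma$ by $\Sigma+\varepsilon I_N$, which is positive definite, invoke the previous step, and let $\varepsilon\downarrow0$. The left-hand side converges by dominated convergence, since $\mu+(\Sigma+\varepsilon I_N)^{1/2}\eta\to\mu+\Sigma^{1/2}\eta$ almost surely and the integrand is bounded by $1$; the right-hand side is continuous in $\Sigma$ over $\S^N_+$ because $I_N+2\Sigma u$ is always invertible there, its eigenvalues coinciding with those of $I_N+2\Sigma^{1/2}u\Sigma^{1/2}\succeq I_N$. Finally, the distributional statement $\xi\xi^\top\sim\mathrm{WIS}_N(\tfrac12,\mu\mu^\top,2\Sigma)$ is then immediate, since the non-central Wishart law with these parameters is characterised by precisely this Laplace transform, and Laplace transforms determine probability distributions on $\S^N_+$.

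The main difficulty is not conceptual but bookkeeping: steering the matrix algebra so that it lands exactly on the compact bracketing $u(I_N+2\Sigma u)^{-1}$ rather than on some equivalent but differently-arranged expression, and disposing cleanly of the possibly rank-deficient case; the substitution $\xi=\mu+\Sigma^{1/2}\eta$ together with Sylvester's and the push-through identities is the most economical way to keep both under control.
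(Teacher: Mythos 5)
Your computation is correct, and it is worth noting that the paper offers no proof of Proposition~\ref{P:Wishartchar} at all: it is stated in Appendix~\ref{A:wishart} as a standard fact about non-central Wishart laws (the only proof sketched in an earlier draft reduces the matrix case to exactly the vector statement you are proving). So you have supplied a genuine argument where the paper merely cites folklore. Both of your routes land on the right formula: the complete-the-square computation with the push-through simplification $\Sigma^{-1}-\Sigma^{-1}(\Sigma^{-1}+2u)^{-1}\Sigma^{-1}=2u(I_N+2\Sigma u)^{-1}$ checks out, as does the diagonalisation of $B=\Sigma^{1/2}u\Sigma^{1/2}$ combined with the scalar identity and Sylvester's determinant identity. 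Two small points. First, your second route actually makes the $\varepsilon$-regularisation unnecessary: $\Sigma^{1/2}$ and $(I_N+2B)^{-1}$ exist for every $\Sigma\in\S^N_+$, and the identity you need to reassemble the exponent is the inversion-free one, $u(I_N+2\Sigma u)^{-1}=u-2u\Sigma^{1/2}(I_N+2B)^{-1}\Sigma^{1/2}u$, rather than the version $\Sigma^{1/2}(I_N+2B)^{-1}\Sigma^{1/2}=(\Sigma^{-1}+2u)^{-1}$ you quote, which presupposes invertible $\Sigma$ and so sits awkwardly with the claim that this route never inverts $\Sigma$. Second, the final distributional assertion is only as strong as the definition of $\mbox{WIS}_N(1/2,\mu\mu^\top,2\Sigma)$ one adopts; since the paper itself only ever uses the Laplace transform, treating the Wishart law as the distribution on $\S^N_+$ characterised by that transform is the right move, and your appeal to uniqueness of Laplace transforms on $\S^N_+$ closes the loop. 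The limiting argument for rank-deficient $\Sigma$ (dominated convergence on the left, continuity of $\Sigma\mapsto\det(I_N+2\Sigma u)^{-1/2}$ and of the exponent on the right, using $\det(I_N+2\Sigma u)=\det(I_N+2\Sigma^{1/2}u\Sigma^{1/2})\geq 1$) is sound if you choose to keep the first route.
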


\section{Matrix tools}\label{A:matrix}
We recall some definitions and properties of matrix tools  used in the proofs throughout the article. For a complete review and proofs we refer to \citet{magnus2019matrix}.

\begin{definition}
	The  vectorization operator   $\vecop$ is defined from $\R^{d \times m }$ to $\R^{dm}$  by stacking the columns  of a $d\times m$-matrix $A$  one underneath another in a $dm$--dimensional vector $\vecop(A)$, i.e. 
	$$ \vecop(A)_p = A_{ij}, \quad p = (j-1)d + i,$$
	for all $i=1,\ldots,d$ and $j=1,\ldots,m$.
\end{definition}

\begin{definition}\label{def: kronecker}
	Let $A \in \R^{d_{1}\times m_{1}}$   and $B\in \R^{d_{2}\times m_{2}}$. The Kronecker product $(A\otimes B)$ is defined as the $d_1 d_2 \times m_1 m_2$ matrix  
	\begin{equation*}
	A\otimes B = \left( \begin{array}{ccc} A_{11} B & \cdots  & A_{1m_1} B \\ \vdots &  &  \vdots \\ A_{d_1 1} B & \cdots  & A_{d_1 m_1} B \end{array} \right).
	\end{equation*}
	or equivalently 
	$$ (A\otimes B)_{pq} = A_{ik} B_{jl}, \quad   p=(i-1)d_2 +j, \quad  q= (k-1)m_2 +l,$$
	for all $i=1,\ldots,d_1$, $j=1,\ldots,d_2$, $k=1,\ldots,m_1$ and $l=1,\dots,m_2$.
\end{definition}

{\begin{proposition} For matrices $A,B,C,D,X,w$ of suitable dimensions, the following relations hold:
		\begin{align}
		\vecop(AXB)&= \left( B^\top \otimes A\right) \vecop(X) \label{eq:kronecker1}\\
		\tr(A^\top w A)&=\vecop(A)^\top (I_m \otimes w) \vecop(A) \label{eq:kronecker2} \\
		\left( A \otimes B\right) \left( C \otimes D\right)&= \left( AC \otimes BD\right) \label{eq:kronecker3}\\
		\tr(A\otimes B)&=\tr(A)\tr(B) \label{eq:kronecker4}\\
			(A\otimes B)^{-1}&=(A^{-1}\otimes B^{-1}) \label{eq:kronecker5}\\
			\det (I_m \otimes A)&= \det(A)^m. \label{eq:kronecker6}
		\end{align}
\end{proposition}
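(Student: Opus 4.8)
The plan is to verify the six identities \eqref{eq:kronecker1}--\eqref{eq:kronecker6} one at a time by elementary entry-wise computations from the index conventions for $\vecop$ and $\otimes$ recorded just above; each is completely classical, so I would in addition point to \citet{magnus2019matrix} for a textbook treatment. The logical order I would use is: first the mixed-product rule \eqref{eq:kronecker3}, then the $\vecop$ rule \eqref{eq:kronecker1}, then deduce \eqref{eq:kronecker2}, and finally dispatch \eqref{eq:kronecker4}, \eqref{eq:kronecker5}, \eqref{eq:kronecker6}, the last three being immediate once the first two are in place.

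First I would prove \eqref{eq:kronecker3}, since \eqref{eq:kronecker5} and (one proof of) \eqref{eq:kronecker6} reduce to it. Writing $p\leftrightarrow(i,j)$ and $q\leftrightarrow(k,l)$ in the Kronecker ordering and summing over the intermediate index $\leftrightarrow(r,s)$, the $(p,q)$-entry of $(A\otimes B)(C\otimes D)$ is $\sum_{r,s}A_{ir}B_{js}\,C_{rk}D_{sl}=\big(\sum_r A_{ir}C_{rk}\big)\big(\sum_s B_{js}D_{sl}\big)=(AC)_{ik}(BD)_{jl}$, which is exactly the $(p,q)$-entry of $(AC)\otimes(BD)$. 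Next I would treat \eqref{eq:kronecker1}: the component of $\vecop(AXB)$ indexed by $(i,j)$ is $(AXB)_{ij}=\sum_{k,l}A_{ik}X_{kl}(B^\top)_{jl}$, and reading off the corresponding row of $B^\top\otimes A$ against $\vecop(X)$, whose component indexed by $(k,l)$ is $X_{kl}$, shows this equals the $(i,j)$-component of $(B^\top\otimes A)\vecop(X)$.

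Identity \eqref{eq:kronecker2} is then a corollary of \eqref{eq:kronecker1}: taking in \eqref{eq:kronecker1} the middle factor to be $A$ and the right factor to be $I_m$ gives $\vecop(wA)=(I_m\otimes w)\vecop(A)$, and combining this with the elementary Frobenius-pairing identity $\tr(M^\top N)=\vecop(M)^\top\vecop(N)$, applied with $M=A$ and $N=wA$, yields $\tr(A^\top wA)=\vecop(A)^\top(I_m\otimes w)\vecop(A)$. For \eqref{eq:kronecker4} I would simply observe that the $i$-th diagonal block of $A\otimes B$ is $A_{ii}B$, so that $\tr(A\otimes B)=\sum_i\tr(A_{ii}B)=\big(\sum_i A_{ii}\big)\tr(B)=\tr(A)\tr(B)$.

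Finally, \eqref{eq:kronecker5} follows from \eqref{eq:kronecker3}, since $(A\otimes B)(A^{-1}\otimes B^{-1})=(AA^{-1})\otimes(BB^{-1})=I$ and symmetrically on the other side, so $A^{-1}\otimes B^{-1}$ is a two-sided inverse; and \eqref{eq:kronecker6} follows because $I_m\otimes A$ is block-diagonal with $m$ copies of $A$ on the diagonal, whence $\det(I_m\otimes A)=\det(A)^m$. The only delicate point throughout is keeping the two indexing conventions — $p=(j-1)d+i$ for $\vecop$ and $p=(i-1)d_2+j$ for $\otimes$ — consistent with each other; there is no genuine analytic obstacle, so this bookkeeping is the only thing that needs care.
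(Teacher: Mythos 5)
Your proof is correct. The paper itself gives no proof of this proposition -- it simply records the identities and defers to \citet{magnus2019matrix} -- so your elementary entry-wise verification (mixed-product rule first, then the $\vecop$ identity, then the remaining four as corollaries or direct block computations) is exactly the standard textbook argument being cited, and the one point you flag, reconciling the column-major $\vecop$ convention $p=(j-1)d+i$ with the Kronecker indexing $p=(i-1)d_2+j$, is indeed the only place where care is needed.
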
}

\section{Proof of Theorem~\ref{T:char2ZZ}}\label{A:proof}
Throughout this section we assume that the function $(s,u)\mapsto C_t(s,u)$ is continuous such that   \eqref{eq:boundC} holds, where  $C_t$ is  given by \eqref{eq:Ct}.

For each $t\leq T$, we consider the integral operator $\bold{C}_t$ induced by the kernel $C_t$
\begin{align*}
(\bold{C}_t f)(s) = \int_0^T C_t(s,u)f(u)du = \int_t^T C_t(s,u)f(u)du, \quad f \in L^2([0,T],\R^N), \quad s \leq T,
\end{align*}
where the last equality follows from the fact that $C_t(s,u)=0$ for any $u\leq t$.  We assume  that $t\mapsto \bold{C}_t$ is differentiable with  derivative $\dot{\bold{C}}_t$ given by \eqref{eq:diffC}.

\begin{lemma} Let $w \in \S^N_+$ and $t\mapsto R^w_{t,T}$ be defined as in \eqref{eq:resolventdef}. Then, 
	\begin{align}
		\sup_{t\leq T} \int_t^T  \int_t^T |R_t^w(s,u)|^2 dsdu&<\infty,\label{eq:boundR0}\\
	 \sup_{t\leq T} \sup_{t\leq s\leq T}   \int_t^T |R_t^w(s,u)|^2 du&<\infty,\label{eq:boundR1}\\
	\sup_{t\leq T} \sup_{t\leq s,u\leq T}  |R^w_{t,T}(s,u)|&<\infty.\label{eq:boundR}
	\end{align}
\end{lemma}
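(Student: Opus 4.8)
The plan is to work from the resolvent representation \eqref{eq:resolventdef}, which expresses $R^w_{t,T}$ via the Mercer decomposition of $C^w_t = \sqrt w C_t \sqrt w$, and to leverage the uniform bound \eqref{eq:boundC} on $C_t$. First I would establish \eqref{eq:boundR0}, the $L^2([t,T]^2)$-bound. From \eqref{eq:resolventdef} one has, by orthonormality of $(e^n_{t,T})_{n\geq 1}$ in $L^2([t,T],\R^N)$,
\begin{align*}
\int_t^T\int_t^T |R^w_{t,T}(s,u)|^2\,ds\,du = \sum_{n\geq 1}\left(\frac{1}{1+2\lambda^n_{t,T}}-1\right)^2 = \sum_{n\geq 1}\frac{4(\lambda^n_{t,T})^2}{(1+2\lambda^n_{t,T})^2}\leq \sum_{n\geq 1}4(\lambda^n_{t,T})^2,
\end{align*}
and the last series is bounded by $\bigl(\sum_{n}\lambda^n_{t,T}\bigr)^2$, which is in turn controlled uniformly in $t$ by the Hilbert--Schmidt norm of $C^w_t$: indeed $\sum_n \lambda^n_{t,T}\le$ something like $(\int_t^T\int_t^T|C^w_t(s,u)|^2\,ds\,du)^{1/2}$ up to dimensional constants, which is finite uniformly in $t$ by \eqref{eq:boundC}. (Alternatively one uses $\sum_n (\lambda^n_{t,T})^2 = \|C^w_t\|_{HS}^2 = \int_t^T\int_t^T |C^w_t(s,u)|^2\,ds\,du$ directly, which is cleaner.)

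Next, for \eqref{eq:boundR1} I would use the algebraic identity satisfied by the resolvent, namely \eqref{eq:resequation}, which gives pointwise
\begin{align*}
R^w_{t,T}(s,u) = -2C^w_t(s,u) - 2\int_t^T C^w_t(s,z)R^w_{t,T}(z,u)\,dz.
\end{align*}
Fixing $s$ and viewing both sides as functions of $u$, I take $L^2([t,T],du)$ norms: the first term contributes $\|2C^w_t(s,\cdot)\|_{L^2}$, uniformly bounded by \eqref{eq:boundC} (even uniformly in $s$ and $t$), and the second term is bounded via Cauchy--Schwarz by $2\left(\int_t^T|C^w_t(s,z)|^2dz\right)^{1/2}\left(\int_t^T\int_t^T|R^w_{t,T}(z,u)|^2\,dz\,du\right)^{1/2}$, where the second factor is exactly \eqref{eq:boundR0}. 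This yields $\sup_{t\le s\le T}\int_t^T|R^w_{t,T}(s,u)|^2\,du<\infty$ uniformly in $t$.

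Finally, for the sup-norm bound \eqref{eq:boundR} I would again use \eqref{eq:resequation}, now estimating $|R^w_{t,T}(s,u)|$ directly: the term $2|C^w_t(s,u)|$ is bounded by \eqref{eq:boundC}, and the integral term $2|\int_t^T C^w_t(s,z)R^w_{t,T}(z,u)\,dz|$ is bounded by $2\left(\int_t^T|C^w_t(s,z)|^2dz\right)^{1/2}\left(\int_t^T|R^w_{t,T}(z,u)|^2dz\right)^{1/2}$, where the first factor is uniformly bounded by \eqref{eq:boundC} and the second by \eqref{eq:boundR1}. I expect the main (minor) obstacle to be bookkeeping the uniformity in $t$ throughout — in particular making sure that the eigenvalue sums and the operator/kernel norms of $C^w_t$ are controlled by a single constant independent of $t\in[0,T]$, which follows because \eqref{eq:boundC} gives a $t$-independent sup-bound on $|C_t(s,u)|$ and the domain $[t,T]^2$ only shrinks as $t$ grows. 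One should also take care that $\sqrt w$ is a fixed matrix, so passing between $C_t$ and $C^w_t$ only costs a constant factor $|w|$.
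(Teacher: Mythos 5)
Your proposal follows essentially the same route as the paper's proof: the Mercer expansion gives \eqref{eq:boundR0} via $\sum_{n}4(\lambda^n_{t,T})^2/(1+2\lambda^n_{t,T})^2\le 4\sum_n(\lambda^n_{t,T})^2=4\|C^w_t\|^2_{L^2([t,T]^2)}$, and the resolvent equation \eqref{eq:resequation} combined with Cauchy--Schwarz bootstraps this $L^2([t,T]^2)$ bound first to \eqref{eq:boundR1} and then to the pointwise bound \eqref{eq:boundR}, with uniformity in $t$ coming from \eqref{eq:boundC} exactly as in the paper. The only inaccuracy is your first suggestion that $\sum_n\lambda^n_{t,T}$ is controlled by the Hilbert--Schmidt norm of $C^w_t$ (false in general, e.g.\ $\lambda_n=1/n$), but the ``cleaner alternative'' you immediately give, $\sum_n(\lambda^n_{t,T})^2=\|C^w_t\|^2_{L^2([t,T]^2)}$, is precisely the paper's argument, so the proof stands.
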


\begin{proof}
	Fix $t\leq T$. It follows from \eqref{eq:resolventdef} that 
	\begin{align*}
\int_t^T  \int_t^T |R_t^w(s,u)|^2 dsdu =  \sum_{n\geq 1} \frac{4(\lambda^n_{t,T})^2}{(1 + 2\lambda^n_{t,T} )^2} \leq 4\sum_{n\geq 1} {(\lambda^n_{t,T})^2} = 4 |w|\int_t^T  \int_t^T |C_t(s,u)|^2 dsdu,
	\end{align*}
	which, combined with \eqref{eq:boundC}, proves \eqref{eq:boundR0}.  Furthermore, an application of Jensen and Cauchy-Schwarz inequalities  on  the resolvent equation~\eqref{eq:resequation} yields
	\begin{align}\label{eq:resolventboundtemp}
	|R_t^w(s,u)|^2 \leq 8 \sup_{t'\leq T} \sup_{t'\leq s',u'\leq T}  |C_{t',T}(s',u')|^2 \left( 1 + T \int_t^T| R^w_t(z,u)|^2 dz\right), \quad t \leq s,u\leq T. 
	\end{align}
	 Integrating the previous identity with respect to $u$  leads to 
	\begin{align*}
	 \int_t^T |R_t^w(s,u)|^2 du \leq 8 T  \sup_{t'\leq T} \sup_{t'\leq s',u'\leq T}  |C_{t',T}(s',u')|^2 \left( 1 +  T\int_t^T\int_t^T| R^w_t(z,u)|^2 dz du\right),
	\end{align*}
	for all $s\geq t$. Combined with \eqref{eq:boundC} and \eqref{eq:boundR0}, we obtain  \eqref{eq:boundR1}. Finally,  it follows from the resolvent equation~\eqref{eq:resequation}  together with Jensen and Cauchy-Schwarz inequalities that 
	\begin{align*}
	|R_t(s,u)|^2 \leq  8 \sup_{t'\leq T} \sup_{t'\leq s',u'\leq T}  |C_{t',T}(s',u')|^2 \left( 1 +  T \int_t^T | R_t^w(s,z)|^2   dz\right)
	\end{align*}
	for all $t\leq s,u \leq T$. The right hand side is bounded by a finite quantity which does not depend on $t$, thanks to \eqref{eq:boundC} and \eqref{eq:boundR1}, yielding \eqref{eq:boundR}. 
\end{proof}

\begin{lemma}\label{L:continuity}
	For each $t\leq s\leq T$, $u\mapsto R^w_{t,T}(s,u)$ is continuous.
	For each $s,u\leq T$, $t\mapsto R^w_{t,T}(s,u)$ is continuous. 
\end{lemma}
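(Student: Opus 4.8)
The plan is to establish both continuity statements directly from the resolvent equation~\eqref{eq:resequation}, which in the notation $C^w_t = \sqrt w C_t \sqrt w$ reads
\begin{align}\label{eq:resplan}
R^w_{t,T}(s,u) = -2C^w_t(s,u) - 2\int_t^T C^w_t(s,z) R^w_{t,T}(z,u)\, dz,
\end{align}
together with the boundedness estimates \eqref{eq:boundR0}--\eqref{eq:boundR} just proved and the standing continuity assumption on $(s,u)\mapsto C_t(s,u)$. For the first claim, fix $t\leq s\leq T$ and let $u,u'\in[t,T]$. Subtracting two copies of \eqref{eq:resplan} gives
\begin{align*}
R^w_{t,T}(s,u) - R^w_{t,T}(s,u') = -2\big(C^w_t(s,u)-C^w_t(s,u')\big) - 2\int_t^T C^w_t(s,z)\big(R^w_{t,T}(z,u)-R^w_{t,T}(z,u')\big)\,dz.
\end{align*}
The first term tends to $0$ as $u'\to u$ by continuity of $C_t$. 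For the integral term, I would instead argue more cleanly by iterating: substituting \eqref{eq:resplan} into itself once expresses $R^w_{t,T}$ as $-2C^w_t$ plus a term of the form $-2\,C^w_t\star(-2C^w_t) + 4\,C^w_t\star C^w_t\star R^w_{t,T}$; since $u\mapsto (C^w_t\star R^w_{t,T})(s,u) = \int_t^T C^w_t(s,z) R^w_{t,T}(z,u)\,dz$ can be shown continuous via dominated convergence — the integrand converges pointwise in $z$ for a.e.\ $z$ after passing to a subsequence, and is dominated using \eqref{eq:boundR1} — the continuity of $u\mapsto R^w_{t,T}(s,u)$ follows. Alternatively and most simply: the kernel $C^w_t$ being continuous on the compact square $[t,T]^2$ is uniformly continuous, and from \eqref{eq:resplan} with the uniform bound \eqref{eq:boundR} on $R^w_{t,T}$, one gets
\[
|R^w_{t,T}(s,u) - R^w_{t,T}(s,u')| \le 2\,\omega(|u-u'|)\big(1 + (T-t)\sup|R^w_{t,T}|\big),
\]
where $\omega$ is the modulus of continuity of $z\mapsto C^w_t(s,z)$ — wait, this bounds the $u$-dependence through $C^w_t(s,\cdot)$ only if we write $R^w_{t,T}(z,u) = -2C^w_t(z,u) - 2\int C^w_t(z,z')R^w_{t,T}(z',u)dz'$ and track $u$ there too; cleanly, one shows $u\mapsto R^w_{t,T}(\cdot,u)$ is continuous in $L^2([t,T])$ first (Neumann-series / Gronwall on \eqref{eq:resplan} using \eqref{eq:boundR0}), then bootstraps to pointwise continuity via \eqref{eq:resplan} and the continuity of $C^w_t$.

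For the second claim, fix $s,u\in[0,T]$ and compare $R^w_{t,T}$ with $R^w_{t',T}$ for $t,t'\in[0,s\wedge u]$, say $t<t'$. Here the subtlety is that the interval of integration in \eqref{eq:resplan} changes with $t$, and the kernel $C^w_t$ itself depends on $t$ through \eqref{eq:Ct}. I would use the absolute continuity of $t\mapsto C_t(s,u)$ with density $-\sqrt w K_T(s,t)K_T(u,t)^\top\sqrt w$ (from \eqref{eq:Ctdensity}) to control $|C^w_{t'}(s,u) - C^w_t(s,u)| = |\int_t^{t'} K_T(s,r)K_T(u,r)^\top dr\cdot(\text{conjugation by }\sqrt w)| \le \sqrt{t'-t}\,\big(\int_t^{t'}|K_T(s,r)|^2dr\big)^{1/2}\cdots$, which is $o(1)$ uniformly in $s,u$ under \eqref{eq:assumptionK}; more directly, mean-square continuity of $Z$ already yields $\sup_{s,u}|C^w_{t'}(s,u)-C^w_t(s,u)|\to 0$ as $t'\downarrow t$. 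Then writing the resolvent equation over the common interval $[t',T]$ for both and splitting $\int_t^T = \int_t^{t'} + \int_{t'}^T$, one obtains an estimate of the form
\[
\sup_{u}|R^w_{t,T}(s,u) - R^w_{t',T}(s,u)| \le c\,\big(\sup_{s,u}|C^w_t - C^w_{t'}| + (t'-t)\big) + c\int_{t'}^T \sup_u|R^w_{t,T}(s,z)-R^w_{t',T}(s,z)|\,dz,
\]
using the uniform bounds \eqref{eq:boundR1}, \eqref{eq:boundR}; Gr\"onwall's inequality then gives $\sup_u|R^w_{t,T}(s,u)-R^w_{t',T}(s,u)|\to 0$, hence pointwise continuity in $t$. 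Left-continuity and right-continuity are handled the same way.

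The main obstacle I expect is bookkeeping the two \emph{simultaneous} sources of $t$-dependence in the second claim — the moving lower endpoint of integration and the $t$-dependence of $C^w_t$ — while keeping all constants uniform in $s,u$ so that the Gr\"onwall step closes; the already-established uniform bounds \eqref{eq:boundR0}--\eqref{eq:boundR} and the (uniform) mean-square continuity of $Z$ are exactly what make this go through, so the argument is routine once those are in hand.
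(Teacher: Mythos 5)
Your treatment of the first claim, after some back-and-forth, lands on a workable route ($L^2$-continuity of $u\mapsto R^w_{t,T}(\cdot,u)$ via the bounded invertibility of $\id+2\sqrt w\bold{C}_t\sqrt w$, then a bootstrap through the resolvent equation), although the paper gets it in one line by using the \emph{other} form of \eqref{eq:resequation}, namely $R^w_{t,T}(s,u)=-2C^w_t(s,u)-2\int_t^T R^w_{t,T}(s,z)C^w_t(z,u)\,dz$, in which the $u$-dependence sits entirely inside the continuous kernel $C^w_t$ and dominated convergence (justified by \eqref{eq:boundC} and \eqref{eq:boundR1}) applies directly.

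The second claim is where there is a genuine gap. The resolvent equation \eqref{eq:resequation} is a \emph{Fredholm} equation, not a Volterra one: after subtracting the equations for $t$ and $t'$ and splitting the integration domain, the self-referential term is $\int_{t'}^T C^w_t(s,z)\bigl(R^w_{t,T}(z,u)-R^w_{t',T}(z,u)\bigr)dz$, whose integral runs over the \emph{whole} interval $[t',T]$ independently of $s$ or $u$. Your displayed inequality therefore has the form $g\leq c\varepsilon + c\int_{t'}^T g$, which is not a Gr\"onwall inequality; it only closes when $c(T-t')<1$, and the constant $c$ involves $\sup|C^w_t|$, which is not small. The missing idea is to invert $(\id+2\sqrt w\bold{C}_t\sqrt w)$, whose inverse has operator norm at most $1$ by nonnegativity of the covariance operator: the paper isolates the problematic term $\bold{III}=\int_t^T\bigl(R^w_{t+h,T}(s,z)-R^w_{t,T}(s,z)\bigr)C^w_t(z,u)\,dz$, reads it as $(\bold{R}^w_{t+h,T}f_u)(s)-(\bold{R}^w_{t,T}f_u)(s)$ with $f_u=C^w_t(\cdot,u)q$, and concludes from the continuity of $t\mapsto\bold{R}^w_{t,T}$ inherited from $t\mapsto\bold{C}_t$ via \eqref{eq:boldRboldC}. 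A secondary, fixable issue: you assert $\sup_{s,u}|C^w_{t'}(s,u)-C^w_t(s,u)|\to0$ from mean-square continuity, but mean-square continuity only gives pointwise (not uniform in $(s,u)$) convergence; uniformity requires an extra argument (e.g.\ Dini, using monotonicity of $t\mapsto\int_t^{t'}|K_T(s,r)|^2dr$), whereas the paper avoids needing it altogether by treating the terms $\bold{I}$ and $\bold{II}$ pointwise with dominated convergence.
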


\begin{proof}
	The first statement follows directly from the continuity of $(s,u)\mapsto C_t(s,u)$ for all $t\leq T$, the resolvent equation~\eqref{eq:resequation} and the dominated convergence theorem which is justified by \eqref{eq:boundC}. The second statement is proved as follows. Fix  $t\leq s,u\leq T$ and $h \in \R$ such that $0\leq t+h\leq T$.  The  resolvent equation~\eqref{eq:resequation} yields
	\begin{align*}
	R^w_{t+h}(s,u)-R^w_t(s,u) &= -2(C^w_{t+h}(s,u)-C^w_t(s,u))\\
	&\quad  - 2\int_t^T 	R^w_{t+h}(s,z)(C^w_{t+h}(z,u)-C^w_t(z,u))dz \\
	& \quad -2 \int_t^T (R^w_{t+h}(s,z) - R^w_{t}(s,z)) C^w_t(z,u) dz \\
	& \quad  +2\int_t^{t+h}	R^w_{t+h}(s,z) 	C^w_{t+h}(z,u)dz \\
	&=\bold{I}+\bold{II}+\bold{III}+\bold{IV}
	\end{align*}
	Since $t\mapsto C_t(s,u)$ is absolutely continuous,  we have that $\bold{I}\to 0$ as $h\to 0$ and also that $\bold{II}\to 0$ by an application of Cauchy--Schwarz inequality, the bound \eqref{eq:boundR}, and the dominated convergence theorem, which is justified by \eqref{eq:boundC}. To prove that $\bold{III}\to 0$, we fix  $q \in \R^N$ and $f_u(s):=C_t^w(s,u) q$. Then, 
	$$ \int_t^T  (R^w_{t+h}(s,z) - R^w_{t}(s,z)) C^w_t(z,u)q dz =  (\bold{R}^w_{t+h} f_u)(s) -(\bold{R}^w_{t} f_u)(s) \to 0,\quad \mbox{as } h\to 0,$$
    where the convergence follows from the continuity of $t\mapsto \bold{R}^w_t$ obtained  from that of $t\mapsto \bold{C}_t$, recall  \eqref{eq:boldRboldC}. By arbitrariness of $q$, we get $\bold{III}\to 0$.	Finally, it follows from \eqref{eq:boundC} and \eqref{eq:boundR}, that $\bold{IV}\to 0$ as $h\to 0$. Combining the above yields $	R^w_{t+h}(s,u)\to R^w_t(s,u)$ as $h\to 0$.
\end{proof}

\begin{lemma}\label{L:Rdiff}
$t\mapsto R^w_{t,T}(s,u)$ is absolutely continuous for almost every $(s,u)$ such that 
\begin{align*}
\dot{R}^w_{t,T}(s,u) &= -2  \sqrt{w}\dot{C}_{t,T}(s,u)\sqrt{w}  - 2 \int_t^T \sqrt{w}\dot{C}_{t,T}(s,z)\sqrt{w} R^w_{t,T}(z,u) dz \\
&\quad - 2 \int_t^T R^w_{t,T}(s,z)  \sqrt{w}\dot{C}_{t,T}(z,u)\sqrt{w} dz \\
&\quad -2  \int_t^T\int_t^T  R^w_{t,T}(s,z)  \sqrt{w}\dot{C}_{t,T}(z,z')\sqrt{w} R^w_{t,T}(z',u)   dzdz', \quad \mbox{on } [t,T]\; a.e.
\end{align*}
with the boundary condition
\begin{align}\label{eq:boundaryR}
R^w_{t,T}(\cdot,t) = R^w_{t,T}(t,\cdot)^\top = 0, \quad t \leq T. 
\end{align}
\end{lemma}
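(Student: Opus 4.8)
The plan is to differentiate in $t$ the operator identity \eqref{eq:boldRboldC} and then to descend from the resulting operator statement to the pointwise statement about kernels. Set $\boldsymbol{A}_t:=\id+2\sqrt w\,\boldsymbol{C}_t\sqrt w$, so that $\boldsymbol{A}_t^{-1}=\id+\boldsymbol{R}^w_{t,T}$ by \eqref{eq:boldRboldC}. Since $C^w_t$ is a nonnegative kernel, $\boldsymbol{A}_t-\id=2\sqrt w\,\boldsymbol{C}_t\sqrt w$ is a nonnegative operator, so $\boldsymbol{A}_t$ is boundedly invertible with $\|\boldsymbol{A}_t^{-1}\|_{\rm op}\le1$ for every $t\le T$. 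By the standing assumption $t\mapsto\boldsymbol{C}_t$ is strongly differentiable with derivative \eqref{eq:diffC}, hence $t\mapsto\boldsymbol{A}_t$ is strongly differentiable, in particular strongly continuous, with $\dot{\boldsymbol{A}}_t=2\sqrt w\,\dot{\boldsymbol{C}}_t\sqrt w$. From $\boldsymbol{A}_{t+h}^{-1}-\boldsymbol{A}_t^{-1}=-\boldsymbol{A}_{t+h}^{-1}(\boldsymbol{A}_{t+h}-\boldsymbol{A}_t)\boldsymbol{A}_t^{-1}$, the uniform bound $\|\boldsymbol{A}_{t+h}^{-1}\|_{\rm op}\le1$ together with strong continuity of $\boldsymbol{A}_\cdot$ gives first the strong continuity of $t\mapsto\boldsymbol{A}_t^{-1}$ and then, after dividing by $h$ and letting $h\to 0$, the strong differentiability of $t\mapsto\boldsymbol{A}_t^{-1}$ with
\[
\tfrac{d}{dt}\boldsymbol{A}_t^{-1}=-\boldsymbol{A}_t^{-1}\dot{\boldsymbol{A}}_t\boldsymbol{A}_t^{-1}=-(\id+\boldsymbol{R}^w_{t,T})\,2\sqrt w\,\dot{\boldsymbol{C}}_t\sqrt w\,(\id+\boldsymbol{R}^w_{t,T}).
\]
Thus $t\mapsto\boldsymbol{R}^w_{t,T}$ is strongly differentiable with $\dot{\boldsymbol{R}}^w_{t,T}$ equal to the right-hand side.

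Next I identify the kernel of $\dot{\boldsymbol{R}}^w_{t,T}$. The kernel $\dot{C}_t(s,u)=-K(s,t)K(u,t)^\top$ of \eqref{eq:Ctdensity} belongs to $L^2([0,T]^2,\R^{N\times N})$ with $\|\dot{\boldsymbol{C}}_t\|_{\rm op}\le\kappa(t):=\int_0^T|K(s,t)|^2\,ds$, and $\kappa\in L^1([0,T])$ because $K\in L^2([0,T]^2,\R^{N\times N})$; moreover $R^w_{t,T}\in L^2([0,T]^2,\R^{N\times N})$ by \eqref{eq:boundR0}. Hence $\dot{\boldsymbol{R}}^w_{t,T}$ is an integral operator, whose kernel is obtained by expanding the two products in the display above and replacing each operator composition by the corresponding $\star$-product of kernels; since all the kernels involved vanish off $[t,T]^2$, this expansion is precisely the asserted formula for $\dot{R}^w_{t,T}(s,u)$.

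It remains to pass from the operator derivative to the a.e.\ pointwise one. For $f,g\in L^2([0,T],\R^N)$ the previous paragraphs show that $t\mapsto\langle f,\boldsymbol{R}^w_{t,T}g\rangle_{L^2}$ is differentiable on $[0,T]$ with derivative $\langle f,\dot{\boldsymbol{R}}^w_{t,T}g\rangle_{L^2}$, bounded in modulus by $c\,\kappa(t)\,\|f\|_{L^2}\|g\|_{L^2}$ for a constant $c=c(w)$. Integrating in $t$ and using Fubini--Tonelli (legitimate by $\kappa\in L^1$ and \eqref{eq:boundR0}) gives
\[
\int_0^T\!\!\int_0^T f(s)^\top\Big(R^w_{t,T}(s,u)-R^w_{0,T}(s,u)-\int_0^t\dot{R}^w_{r,T}(s,u)\,dr\Big)g(u)\,du\,ds=0
\]
for all such $f,g$, hence $R^w_{t,T}(s,u)=R^w_{0,T}(s,u)+\int_0^t\dot{R}^w_{r,T}(s,u)\,dr$ for a.e.\ $(s,u)$ and each fixed $t$. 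Applying this at rational $t$ and invoking the continuity of $t\mapsto R^w_{t,T}(s,u)$ from Lemma~\ref{L:continuity} (and the continuity in $t$ of the integral term), the identity holds for all $t\in[0,T]$ and a.e.\ $(s,u)$, which is the claimed absolute continuity with $\dot{R}^w_{t,T}$ given by the stated formula. Finally, the boundary condition \eqref{eq:boundaryR} follows by evaluating the resolvent equation \eqref{eq:resequation} at $s=t$ and at $u=t$: as $C_t(s,u)=0$ whenever $s\wedge u\le t$ and $(s,u)\mapsto C_t(s,u)$ is continuous, one has $C^w_t(t,\cdot)\equiv0\equiv C^w_t(\cdot,t)$ on $[t,T]$, whence $R^w_{t,T}(t,\cdot)\equiv0$ directly and $(\id+2\sqrt w\,\boldsymbol{C}_t\sqrt w)R^w_{t,T}(\cdot,t)=0$, so $R^w_{t,T}(\cdot,t)\equiv0$ by invertibility.

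The main obstacle is this last transfer from the operator level to the a.e.\ pointwise kernel level: it relies on the uniform-in-$t$ operator bounds (needed both for the Fubini step and for the integrability of $t\mapsto\langle f,\dot{\boldsymbol{R}}^w_{t,T}g\rangle_{L^2}$) and on the $t$-continuity of $R^w_{t,T}(s,u)$ from Lemma~\ref{L:continuity} to upgrade the identity from rational to all $t$. The operator-level differentiation itself is routine once one observes the bound $\|\boldsymbol{A}_t^{-1}\|_{\rm op}\le1$, which comes from the nonnegativity of $C^w_t$.
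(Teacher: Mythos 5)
Your proof is correct and shares the paper's overall skeleton --- differentiate the operator identity \eqref{eq:boldRboldC} to get $\dot{\bold{R}}^w_{t,T}=-2(\id+\bold{R}^w_{t,T})\sqrt{w}\,\dot{\bold{C}}_{t}\,\sqrt{w}(\id+\bold{R}^w_{t,T})$, read off its kernel as the $\star$-product expansion, and obtain \eqref{eq:boundaryR} from the resolvent equation \eqref{eq:resequation} together with $C_t(t,\cdot)=C_t(\cdot,t)^\top=0$ --- but it genuinely differs in the two technical steps. For the operator differentiability, you supply a justification the paper leaves implicit, via the second-resolvent identity and the bound $\|(\id+2\sqrt{w}\bold{C}_t\sqrt{w})^{-1}\|_{\rm op}\leq 1$ coming from nonnegativity of $C^w_t$. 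More substantially, for the passage from the operator derivative to the a.e.\ pointwise kernel derivative, the paper's Step~2 analyses the difference quotient of $(\bold{R}^w_{t+h,T}f)(s)$ directly, splitting off the boundary contributions caused by the moving integration domain $[t+h,T]$ versus $[t,T]$ and controlling them by the uniform (Heine--Cantor) continuity of the resolvent kernel from Lemma~\ref{L:continuity}; you instead test against pairs $f,g\in L^2$, integrate the (bounded, hence Lipschitz-yielding) scalar derivative over $[0,t]$, exchange integrals by Fubini--Tonelli using \eqref{eq:boundR0} and the integrability of $t\mapsto\int_0^T|K(s,t)|^2ds$, and then upgrade the for-each-$t$, a.e.-$(s,u)$ identity to all $t$ by a rational-$t$ density argument combined with the $t$-continuity from Lemma~\ref{L:continuity}. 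Your duality route avoids the somewhat delicate bookkeeping of the boundary terms, at the price of delivering the conclusion only for almost every $(s,u)$ --- which is exactly what the lemma asserts, though the paper's pointwise argument gives slightly more. Both arguments are sound.
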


\begin{proof}
	The boundary condition \eqref{eq:boundaryR} follows from the resolvent equation~\eqref{eq:resequation} and the fact that $C_{t}(\cdot,t)=C_{t}(t,\cdot)^\top=0$, for all $t\leq T$. \\
\textit{{Step 1.}}
It follows from \eqref{eq:boldRboldC} and the fact that $t\mapsto \bold{C}_t$ is differentiable, that $t\mapsto \bold{R}^w_{t,T}$ is differentiable, so that 
\begin{align}\label{eq:Roperatordiff}
 (\bold{R}^w_{t+h,T} f)(s) =  (\bold{R}^w_{t,T} f)(s) + h  (\dot{\bold{R}}^w_{t,T} f)(s) + o(|h|), \quad f \in L^2([0,T],\R^{N}), \quad s\leq T,
\end{align}
for all $h\in \R$  such that $0\leq t+h\leq T$, with 
\begin{align*}
\dot{\bold{R}}^w_{t,T} = - 2 (\id +{\bold{R}}^w_{t,T}) \sqrt{w} \dot{\bold{C}}_{t,T}    \sqrt{w}   (\id +{\bold{R}}^w_{t,T}).  
\end{align*}
The right hand side being a composition of integral operators, $\dot{\bold{R}}^w_{t,T}$ is again an integral operator with kernel given by 
\begin{align*}
-2 (\delta +{{R}}^w_{t,T} ) \star \sqrt{w} \dot{C}_{t,T} \sqrt{w} \star(  \delta +{{R}}^w_{t,T} ),
\end{align*}
where by some abuse of notations $\delta$ denotes the kernel induced by the identity operator $\id$, that is $(\id f)(s)=\int_t^T \delta_{s=u}(ds,du)f(u)=f(s)$.\\
\textit{{Step 2.}}
Fix $f$ a measurable and bounded function, $t,h$ such that  $0\leq t+h\leq T$, $s\leq T$ and write 
\begin{align*}
({\bold{R}}^w_{t+h,T}f)(s,u)&= \int_{t+h}^T {{R}}^w_{t+h,T}(s,u)f(u) du \\
&= ({\bold{R}}^w_{t,T}f)(s,u) + \int_t^T \left( {{R}}^w_{t+h,T}(s,u) - {{R}}^w_{t,T}(s,u)   \right) f(u)du \\
&\quad - \int_t^{t+h} \left( {{R}}^w_{t+h,T}(s,u) - {{R}}^w_{t,T}(s,u)   \right) f(u)du  \\
&\quad + \int_t^{t+h} \left(  {{R}}^w_{t,T}(s,t) - {{R}}^w_{t,T}(s,u)   \right) f(u)du\\
&= \bold{I}+\bold{II}+\bold{III}+\bold{IV}
\end{align*}
where we used the vanishing boundary condition \eqref{eq:boundaryR} to introduce $R^w_{t,T}(s,t)$ in $\bold{IV}$.  Subtracting the previous equation to \eqref{eq:Roperatordiff} yields
\begin{align}\label{eq:II}
 \bold{II} = h (\dot{\bold{R}}^w_{t,T} f)(s) - \bold{III} - \bold{IV} + o(|h|).
\end{align}
An application of the Heine--Cantor theorem  yields that the continuity statements in Lemma~\ref{L:continuity} can be strengthened to   uniform continuity. Whence, for an arbitrary $\varepsilon >0 $ and for $h$ small enough,
$$\sup_{ u \in [t,t+h]}  |{{R}}^w_{t,T}(s,t) - {{R}}^w_{t,T}(s,u)|+   \sup_{ u \in [t,t+h]}  |{{R}}^w_{t+h,T}(s,u) - {{R}}^w_{t,T}(s,u)| \leq \varepsilon, \quad t\leq s\leq T. $$
This yields $|\bold{III}|+|\bold{IV}|\leq c h \varepsilon$, for some constant $c>0$, so that taking limits in \eqref{eq:II} gives
$$\lim_{h\to 0}\frac 1 h \bold{II}= (\dot{\bold{R}}^w_{t,T} f)(s).$$
An application of the dominated convergence theorem, which is justified by \eqref{eq:boundR}, yields that for any  $u,s\leq T$ $t\mapsto R_t(s,u)$ is absolutely continuous with
\begin{align*}
\dot {R}^w_t(s,u) =   -2 (\delta +{{R}}^w_{t,T} ) \star \sqrt{w} \dot{C}_{t,T} \sqrt{w} \star(  \delta +{{R}}^w_{t,T} ),
\end{align*}
which is the claimed expression. 
\end{proof}

We can now complete the proof of Theorem~\ref{T:char2ZZ}. 

\begin{proof}[Proof of Theorem~\ref{T:char2ZZ}] The claimed expression for the Laplace transform follows from \eqref{eq:charinfinite2}, the Riccati equation for $\Psi$ as defined in \eqref{eq:psi_tT} follows from Lemma~\ref{L:Rdiff},  and that of $\phi$ is straightforward from \eqref{eq:RiccatiopBigPhi}.
\end{proof}

\section*{Data availability statement}
No data were used to support this study.

% ----------------------------------------------------------------
 
\small

\bibliographystyle{plainnat}
\bibliography{bibl}
\end{document}